\pgfplotsset{compat=1.15}
\numberwithin{equation}{section}
\newtheorem{Lemma}{Lemma}[section]
\newtheorem{lemma}{Lemma}[section]
\newtheorem{Proposition}[Lemma]{Proposition}
\newtheorem{Theorem}[Lemma]{Theorem}
\newtheorem{Remark}[Lemma]{Remark}
\newtheorem{Definition}[Lemma]{Definition}
\newtheorem{remark}[Lemma]{Remark}
\newtheorem{Claim}[Lemma]{Claim}
\newcommand{\bfa} {{\mathbf a}}
\newcommand{\bfp} {{\mathbf p}}
\newcommand{\bfq} {{\mathbf q}}
\newcommand{\bfu} {{\mathbf u}}
\newcommand{\bfx} {{\mathbf x}}
\newcommand{\bfM} {{\mathbf M}}
\newcommand{\bfT} {{\mathbf T}}
\newcommand{\Acal}   {\mathcal{A}}
\newcommand{\Bcal}   {\mathcal{B}}
\newcommand{\Ccal}   {\mathcal{C}}
\newcommand{\Dcal}   {\mathcal{D}}
\newcommand{\Fcal}   {\mathcal{F}}
\newcommand{\Mcal}   {\mathcal{M}}
\newcommand{\Scal}   {\mathcal{S}}
\newcommand{\Tcal}   {\mathcal{T}}
\newcommand{\e}{{\mathrm e}}
\newcommand{\R}{\mathbb{R}}
\newcommand{\N}{\mathbb{N}}
\newcommand{\Qbb}{\mathbb{Q}}
\newcommand{\E}{\mathbb{E}}
\newcommand{\prob}{\mathbb{P}}
\newcommand{\emp}{\varnothing}
\newcommand{\eqn}[1]{\begin{equation} #1 \end{equation}}
\newcommand{\eqan}[1]{\begin{align} #1 \end{align}}
\newcommand{\one}{\mathbbm{1}}
\newcommand{\PA}{\mathrm{PA}}
\newcommand{\RPPT}{\mathrm{RPPT}}
\newcommand{\PPT}{\mathrm{PPT}}
\newcommand{\sss}{\scriptscriptstyle}
\newcommand{\din}{d^{\sss(\mathrm{in})}}
\newcommand{\old}{{\scriptscriptstyle {\sf O}}}
\newcommand{\Old}{{\scriptstyle {\sf O}}}
\newcommand{\young}{{\scriptscriptstyle {\sf Y}}}
\newcommand{\Young}{{\scriptstyle {\sf Y}}}
\newcommand{\btr}{$\triangleright$}
\newcommand{\vol}{\mathrm{vol}}
\newcommand{\cut}{\mathrm{C}}
\newcommand{\nn}{\nonumber}
\newcommand{\vep}{\varepsilon}
\newcommand{\FC}[1]{#1}
\newcommand{\arxiversion}[1]{}
\definecolor{darkred}{rgb}{1,0,0}
\definecolor{darkgreen}{rgb}{0,0.6,0}
\definecolor{darkblue}{rgb}{0,0,1}
\definecolor{darkagenta}{rgb}{0.6,0,0.6}
\definecolor{wrwrwr}{rgb}{0.3803921568627451,0.3803921568627451,0.3803921568627451}
\definecolor{rvwvcq}{rgb}{0.08235294117647059,0.396078431372549,0.7529411764705882}
\theoremstyle{thmstyleone}%
\theoremstyle{thmstyletwo}%
\theoremstyle{thmstylethree}%
\begin{document}

\title[Percolation on preferential attachment models]{Percolation on preferential attachment models}


\author[1]{\fnm{Rajat} \sur{Hazra}}\email{r.s.hazra@math.leidenuniv.nl}

\author[2]{\fnm{Remco} \sur{van der Hofstad}}\email{r.w.v.d.hofstad@TUE.nl}

\author[2]{\fnm{Rounak} \sur{Ray}}\email{r.ray@tue.nl}

\affil[1]{\orgdiv{Department of Mathematics}, \orgname{University of Leiden}, \orgaddress{\country{The Netherlands}}}

\affil[2]{\orgdiv{Department of Mathematics and Computer Science}, \orgname{Eindhoven University of Technology}, \orgaddress{\country{The Netherlands}}}



\abstract{We study the percolation phase transition on preferential attachment models, in which vertices enter with $m$ edges and attach proportionally to their degree plus $\delta$. We identify the critical percolation threshold as $$\pi_c=\frac{\delta}{2\big(m(m+\delta)+\sqrt{m(m-1)(m+\delta)(m+1+\delta)}\big)}$$ for $\delta$ positive and $\pi_c=0$ for non-positive values of $\delta$. Therefore the giant component is robust for $\delta\in(-m,0]$, while it is not for $\delta>0$.
	
Our proof for the critical percolation threshold consists of three main steps. 
First, we show that preferential attachment graphs are large-set expanders, enabling us to verify the conditions outlined by Alimohammadi, Borgs, and Saberi (2023). Within their conditions, the proportion of vertices in the largest connected component in a sequence converges to the survival probability of percolation on the local limit. In particular, the critical percolation threshold for both the graph and its local limit are identical.
Second, we identify $1/\pi_c$ as the spectral radius of the mean offspring operator of the P\'olya point tree, the local limit of preferential attachment models.
Lastly, we prove that the critical percolation threshold for the P\'olya point tree is the inverse of the spectral radius of the mean offspring operator. For positive $\delta$, we use sub-martingales to prove sub-criticality and apply spine decomposition theory to demonstrate super-criticality, completing the third step of the proof. For $\delta\leq 0$ and any $\pi>0$ instead, we prove that the percolated P\'olya point tree dominates a supercritical branching process, proving that the critical percolation threshold equals $0$.}

\keywords{local convergence, preferential attachment model, Cheeger constant, percolation, spine decomposition, Kesten-Stigum}



\maketitle

\section{Introduction and main results}
\subsection{Introduction}
Bond percolation is a fundamental and elementary process in statistical physics and network science that investigates the connectivity of a network under random attack. In bond percolation, each edge of a given graph is independently retained with probability $\pi$ and removed with probability $1-\pi$. Random graph models typically undergo a phase transition in their connectivity structure depending on the percolation probability $\pi$. Specifically, there exists a critical percolation threshold $\pi_c \in [0,1]$ such that for $\pi > \pi_c$, the proportion of vertices in the largest connected component is strictly positive, whereas for $\pi < \pi_c$, this proportion converges to zero. In other words,
\eqn{\label{def:critical-percolation}
\frac{|\Ccal_1^{(n)}(\pi)|}{n} \to c(\pi)~,}
where $c(\pi)>0$ for $\pi>\pi_c$ and $0$ when $\pi<\pi_c$ and $\Ccal_1^{(n)}(\pi)$ is the largest connected component of the percolated graph of size $n$.
$\pi_c$ is known as the \emph{critical percolation threshold} of the random graph.

Percolation on random graphs has an extensive and diverse literature, and we refer the reader to \cite{BJR07,ER60,vdH1,JL09,JPRR18,MR95} and the references therein. Local convergence techniques have emerged as valuable tools for studying the size and uniqueness of the giant component. However, these properties do not immediately follow from local convergence.

In \cite[Conjecture~1.2]{BNP11}, Schramm conjectured that the critical percolation threshold exhibits locality for a sequence of vertex-transitive infinite graphs converging locally to a limiting graph. Various attempts have been made to prove this conjecture for different subclasses of infinite graphs in \cite{BNP11,S21,ABS22,KLS20,vdH23}. In particular, van der Hofstad in \cite{vdH23} provided a simple necessary and sufficient condition for the conjecture to hold, while \cite{BNP11,S21,KLS20} proved it for bounded-degree expanders. Recently, Easo and Hutchcroft proved the Schramm conjecture in \cite{EH23}. Furthermore, \cite{ABS22} established the conjecture for large-set expander graphs with bounded average degree and extended the result to include directed percolation as well. This result extends the Schramm's conjecture beyond the scope of vertex-transitive graphs. The local limit of the preferential attachment model was initially explored in \cite{BergerBorgs}. Subsequently, \cite{RRR22} demonstrated the universality of this local limit, encompassing a broad spectrum of preferential attachment models.

Although percolation on static random graphs has been extensively studied, percolation on preferential attachment models is relatively less explored. In \cite{EMO21}, the authors studied the proportion of vertices in the largest connected component of the Bernoulli preferential attachment model defined in \cite{DM13}, where the attachment rule is a function of the number of incoming edges of a vertex. In \cite{ABS22}, the preferential attachment model for the independent edge attachment rule and $\delta=0$ was studied. It was shown that the model is a large-set expander with bounded average degree, and its critical percolation threshold is $0$. Additionally, it was demonstrated that the phase transition is of infinite order in that case as proved in \cite{EMO21} for the model in \cite{DM13}. The fact that $\delta=0$ plays a crucial role in this proof, which mainly relies on functional analysis. In this paper, we extend the work in \cite{ABS22} to several other preferential attachment models with affine attachment functions and a general $\delta$ parameter.
\subsection{The models}
In this paper, we focus on sequential preferential attachment models, specifically models (a), (b) and (d) as defined in \cite{vdH1}, excluding their tree cases. We fix $m\in\N\setminus{\{1\}}$ and $\delta>-m$. In these models, every new vertex is introduced to the existing graph with $m$ edges incident to it. The models are defined by their edge-connection probabilities.

We start from any finite graph with $2$ vertices and finitely many connections between them such that at least one of the initial vertices have degree at most $m$. Let $a_1$ and $a_2$ denote the degrees of the vertices $1$ and $2$, respectively. Without loss of generality, consider $a_2\leq m$. We also allow for self-loops in the initial graph.
\paragraph{Model (a).} For each new vertex $v$ joining the graph and $j=1,2,\ldots,m$, the attachment probabilities are given by
\eqn{\label{def:model:a}
	\prob\Big( v\overset{j}{\rightsquigarrow} u\mid\PA^{(a)}_{v,j-1}(m,\delta) \Big)= \begin{cases}
		\frac{d_u(v,j-1)+\delta}{c_{v,j}^{(a)}}\hspace{1.15cm}\text{for}~u<v,\\
		\frac{d_v(v,j-1)+{j\delta}/{m}}{c_{v,j}^{(a)}}\hspace{0.65cm}\text{for}~u=v,
	\end{cases}
}
where $v\overset{j}{\rightsquigarrow} u$ denotes that vertex $v$ connects to $u$ with its $j$-th edge, $\PA^{(a)}_{v,j}({m},\delta)$ denotes the graph on $v$ vertices, \FC{with} the $v$-th vertex \FC{having} $j$ out-edges, and $d_u(v,j)$ denotes the degree of vertex $u$ in $\PA^{(a)}_{v,j}({m},\delta)$. We identify $\PA^{(a)}_{v+1,0}({m},\delta)$ with $\PA^{(a)}_{v,m}({m},\delta)$. The normalizing constant $c_{v,j}^{(a)}$ in \eqref{def:model:a} equals
\eqn{\label{eq:normali}
	c_{v,j}^{(a)} := a_{[2]}+2\delta+(2m+\delta)(v-3)+2(j-1)+1+\frac{j\delta}{m} \, ,
}
where $a_{[2]}=a_1+a_2$. We denote the above model by $\PA^{(a)}_{v}({m},\delta)$, which is equivalent to $\PA_v^{(m,\delta)}(a)$ as defined in \cite{vdH1}.
\paragraph{Model (b).}
For every new vertex $v$ joining the graph and $j=1,2,\ldots,m$, the attachment probabilities are given by
\eqn{\label{def:model:b}
	\prob\Big( v\overset{j}{\rightsquigarrow} u\mid\PA^{(b)}_{v,j-1}(m,\delta) \Big)= \begin{cases}
		\frac{d_u(v,j-1)+\delta}{c_{v,j}^{(b)}}\hspace{1.8cm}\text{for}~u<v,\\
		\frac{d_v(v,j-1)+{(j-1)\delta}/{m}}{c_{v,j}^{(b)}}\hspace{0.65cm}\text{for}~u=v,
	\end{cases}
}
with all notation as before, but now for model (b), for which
\eqn{\label{eq:normalb}
	c_{v,j}^{(b)} := a_{[2]}+2\delta+(2m+\delta)(v-3)+2(j-1)+\frac{(j-1)\delta}{m} \, .
}
We denote the above model as $\PA^{(b)}_{v}({m},\delta)$, which is equivalent to $\PA_v^{(m,\delta)}(b)$ as defined in \cite{vdH1}.

\begin{remark}[Difference between models (a) and (b)]
	\rm{Models (a) and (b) are different in that the first edge from every new vertex can create a self-loop in model (a) but not in model (b). Note that the edge probabilities are different for all $j$.}\hfill$\blacksquare$
\end{remark}

\paragraph{Model (d).}
For every new vertex $v$ joining the graph and $j=1,2,\ldots,m$, the attachment probabilities are given by
\eqn{\label{def:model:d}
	\prob\Big( v\overset{j}{\rightsquigarrow} u\mid\PA^{(d)}_{v,j-1}(m,\delta) \Big)= \frac{d_u(v,j-1)+\delta}{c_{v,j}^{(d)}}\hspace{1.5cm}\text{for}~u<v~,
}
with all notation as before, but now for model (d), for which
\eqn{\label{eq:normald}
	c_{v,j}^{(d)} := a_{[2]}+2\delta+(2m+\delta)(v-3)+(j-1)\, .
}
We denote the above model as $\PA^{(d)}_{v}({m},\delta)$, which is essentially equivalent to $\PA_v^{(m,\delta)}(d)$ as defined in \cite{vdH1}.

\subsection{Main results}\label{subsec:main-theorem}
Fix $m\in\N\setminus\{1\}$ and $\delta>-m$. Throughout this paper, $(G_n)_{n\geq2}$ will denote $\big(\PA^{ (s)}_n(m,\delta)\big)_{n\geq 2}$ with $s\in\{a,b,d\}$. We now state the main result of this article:
\begin{Theorem}[Critical Percolation Threshold for $\PA$ models]\label{thm:main:theorem}
	For any $\pi\in[0,1]$, let $\Ccal_i^{(n)}(\pi)$ denote the $i$-th largest connected component of the percolated $G_n$.
	Then
	\eqn{\label{eq:main:theorem}
	\frac{|\Ccal_1^{(n)}(\pi)|}{n} \overset{\prob}{\to}\zeta(\pi)
	\quad\text{and}\quad\frac{|\Ccal_2^{(n)}(\pi)|}{n} \overset{\prob}{\to}0~,}
	where $\overset{\prob}{\to}$ denotes convergence in probability with respect to both the random graph and percolation. Furthermore, define 
	\eqn{\label{:def:pi-c}
	\pi_c=\begin{cases}
		0, &\text{for}\quad \delta\in(-m,0]\\
		\frac{\delta}{2\big(m(m+\delta)+\sqrt{m(m-1)(m+\delta)(m+1+\delta)}\big)}, &\text{for}\quad\delta>0
	\end{cases}.}
	Then
	$\zeta(\pi)>0$ for $\pi>\pi_c$, whereas $\zeta(\pi)=0$ for $\pi\leq \pi_c$.
\end{Theorem}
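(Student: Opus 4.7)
The plan is to reduce the problem to a question about percolation on the local limit (the Pólya point tree) and then to determine the critical threshold for that branching-like object. The proof naturally decomposes into the three steps outlined in the abstract.

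First, I would invoke the general framework of Alimohammadi, Borgs, and Saberi. To apply their theorem it suffices to verify that $(G_n)_{n\geq 2}$ (a) converges locally in probability to the Pólya point tree (which follows from the universality result in \cite{RRR22}), (b) has bounded average degree (true because every vertex is born with exactly $m$ out-edges, so the average degree is $2m$), and (c) is a \emph{large-set expander}, i.e.\ every linear-sized vertex subset $S$ with $|S|\le n/2$ satisfies $|\partial S|\ge c|S|$ for some constant $c>0$ with high probability. Once these three conditions are verified, \cite{ABS22} yields $|\Ccal_1^{(n)}(\pi)|/n\overset{\prob}{\to}\zeta(\pi)$ and $|\Ccal_2^{(n)}(\pi)|/n\overset{\prob}{\to}0$, where $\zeta(\pi)$ is the survival probability of $\pi$-percolation on the Pólya point tree. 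So $\pi_c$ for $G_n$ equals the percolation threshold of the Pólya point tree. I expect (c) to be the technical crux of this step; I would prove it by controlling the number of edges a typical set can have (using size-biasing and concentration for Pólya urns), together with the martingale-based degree moment estimates available for these PA models, to show that linear-sized sets must have a linear boundary.

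Second, I would identify the critical threshold of the Pólya point tree with the reciprocal of the spectral radius $\rho$ of its mean offspring operator. The Pólya point tree is a multi-type branching process whose types are the ``ages'' $x\in(0,1]$, with old/young in/out-neighbour intensities encoded through powers of $x$. A direct computation yields an explicit integral operator $K$ on a suitable $L^p$-space whose entries are of the form $c(x/y)^{\chi}$ etc.; solving the eigenvalue equation $Kf=\rho f$ with an ansatz $f(x)=x^{-a}$ reduces to an algebraic equation whose largest root gives
\[
\rho=\frac{2\bigl(m(m+\delta)+\sqrt{m(m-1)(m+\delta)(m+1+\delta)}\bigr)}{\delta}\quad\text{for }\delta>0,
\]
matching $1/\pi_c$ as defined in \eqref{:def:pi-c}. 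For $\delta\le 0$, the same computation yields $\rho=\infty$, corresponding to $\pi_c=0$. I expect the main difficulty in this step to be choosing the correct functional-analytic framework (the right weighted $L^p$-space on which $K$ is bounded and positive) so that a Perron–Frobenius type theorem guarantees that the computed eigenvalue really is the spectral radius and that the eigenfunction is positive.

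Third, I would prove that the survival probability of $\pi$-percolation on the Pólya point tree is zero for $\pi<1/\rho$ and strictly positive for $\pi>1/\rho$. For $\delta>0$ (so $\rho<\infty$), subcriticality follows by showing that $\pi^n\rho^n$ times the population at generation $n$, weighted by the Perron eigenfunction of $K$, is a nonnegative supermartingale that converges to $0$ when $\pi\rho<1$. Supercriticality is obtained via a Kesten–Stigum/spine-decomposition argument: I would build the size-biased tree using the positive eigenfunction of $K$, show that the associated martingale $W_n=\pi^n\rho^n\sum_v f(\text{type}(v))$ is uniformly integrable when $\pi\rho>1$ (verifying the $L\log L$ condition via the explicit form of the offspring distribution), and conclude that the survival set has positive probability. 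For $\delta\le 0$, I would exhibit an explicit supercritical Galton–Watson subprocess inside the $\pi$-percolated Pólya point tree for every $\pi>0$: because the intensity of old neighbours carries a factor $x^{-\chi}$ with $\chi\ge 1$, vertices of age close to $0$ produce infinitely many offspring in expectation, so one can select a nested sequence of vertices with decreasing ages whose surviving-after-percolation offspring count exceeds any prescribed threshold, yielding a supercritical subtree. The hardest part of the whole argument will be the supercritical direction for $\delta>0$, where one must verify that the spine construction is compatible with Bernoulli percolation and that the resulting martingale is non-degenerate; this is where the spine decomposition and the $L\log L$ moment estimate really do the work.
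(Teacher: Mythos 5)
Your three-step architecture --- (i) verify the Alimohammadi--Borgs--Saberi conditions, (ii) identify $1/\pi_c$ with the spectral radius of the mean offspring operator of the P\'olya point tree, and (iii) prove sub/super-criticality at $1/\rho$ for $\delta>0$ and $\pi_c=0$ for $\delta\leq 0$ --- is exactly the paper's. The ansatz $f(x)=x^{-a}$ (with $a=1/2$) and the spine/Kesten--Stigum route for supercriticality also match. However, two pieces of the argument have genuine gaps.

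First, your subcriticality step for $\delta>0$ is incomplete. You propose to show that the Perron-weighted population is a nonnegative supermartingale that converges to $0$ when $\pi\rho<1$. But this does not establish extinction of the P\'olya point tree restricted to ages in $(0,1]$, because the eigenfunction $f(y,t)=\bfp_t/\sqrt{y}$ vanishes as $y\to\infty$: the weighted sum can tend to $0$ while particles persist with ages drifting off to infinity. Moreover, at the critical point $\pi\rho=1$ the process is a \emph{martingale} with constant mean $1$, so it certainly need not converge to $0$, yet the theorem asserts $\zeta(\pi_c)=0$. The paper's Proposition~\ref{lem:subcriticality} instead shows that for every fixed $N$, the event ``some generation-$n$ particle has age $<N$'' occurs only finitely often almost surely, via a coupling/stopping-time argument exploiting that a low-age particle forces macroscopic jumps of the supermartingale, contradicting a.s.\ convergence. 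That extra step is what makes the truncation to ages $\leq 1$ die out, including at criticality.

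Second, your explanation of the $\delta\leq 0$ case contains a factual error and misses the key mechanism. You write ``the intensity of old neighbours carries a factor $x^{-\chi}$ with $\chi\geq 1$,'' but $\chi=(m+\delta)/(2m+\delta)\leq 1/2$ for $\delta\leq 0$ (and $\chi<1$ for all $\delta>-m$). It is also not enough that low-age vertices have many offspring in expectation --- that is true for every $\delta>-m$ and does not distinguish the regimes. What matters is that for $\delta\leq 0$ a low-age vertex produces other \emph{low-age descendants} in abundance: the paper's ``elbow children'' construction passes through a $\Young$-labeled intermediary to an $\Old$-labeled grandchild, and the resulting two-step kernel $\kappa_1(y,z)\propto\pi^2(yz)^{\chi-1}(1-y^{1-2\chi})/(1-2\chi)$ (or $\propto\pi^2(yz)^{-1/2}\log(1/y)$ for $\delta=0$) integrates to a quantity that diverges as the age threshold $h\to 0$ precisely because $2\chi\leq 1$. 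A single-type Galton--Watson lower bound with mean $>1$ is then extracted by truncating ages at $h$ and using a stochastic-monotonicity lemma; your sketch does not produce such a lower-bounding branching process. Your expander argument (Pólya-urn concentration plus degree moments) is also quite different from the paper's $\mathrm{FIT}/\mathrm{ILL}$ mini-vertex counting adapted from Mihail--Papadimitriou--Saberi, and as stated it is too vague to assess, but that is a smaller concern than the two gaps above.
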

\paragraph{\textbf{Observations and discussions}.} We make a few remarks about the above result:
\begin{enumerate}
	\item Our proof critically relies on the local limit of preferential attachment models. In \cite{RRR22} (see also \cite{BergerBorgs}), we observed that a large class of preferential attachment models with affine edge attachment rules have the same local limit. While we have specifically proven Theorem~\ref{thm:main:theorem} for models (a), (b) and (d), it is reasonable to expect that the result holds for other versions of preferential attachment models as well. 
	\item The authors in \cite{ABS22} have demonstrated that the result holds true for the model where edge-connection probabilities of the new vertex are independent, and $\delta=0$. \cite{DM13} provided the critical percolation threshold for Bernoulli preferential attachment model, whereas \cite{EMO21} proved that the order of phase transition for the same model is infinite.
	\item 
	We use the condition $m\geq 2$ crucially to prove Theorem~\ref{thm:main:theorem}. Our proof fails because trees are not expanders. In \cite{RTV07}, preferential attachment models were investigated from the perspective of continuous-time embeddings, which turns the model into a continuous-time branching process. Percolation on a continuous-time branching process is still a continuous-time branching process, which allows one to investigate also the tree setting arising when $m=1$. We abstain from performing the explicit computations for this case.
	\item The parameter $1/\pi_c$ can be interpreted as the spectral radius of the mean-offspring operator of the local limit of $\PA$s. It can be expected to play a crucial role in other structural properties of $\PA$s. For example, \cite{vdH2} proves a lower bound on typical distances for $\PA$s with $\delta>0$ of order $\log n/\log (1/\pi_c)$. See also \cite{DHH10} for related distance results in $\PA$s.
\end{enumerate}

\section{Overview of proof of Theorem~\ref{thm:main:theorem}}\label{sec:proof-strategy} Under certain conditions on the random graph sequence, the giant component of the graph after percolating with probability $\pi$ becomes a local property. This means that the limiting behavior of the largest connected component in the percolated graphs can be determined from the percolated local limit of the graph sequence. We utilize the conditions provided by Alimohammadi, Borgs and Saberi in \cite{ABS22}. Before continuing with the overview of the proof we describe local convergence in the space of marked rooted graphs, as well as the P\'olya point tree as the local limit of preferential attachment models.

\subsection{The space of rooted vertex-marked graphs and marked local convergence}
Local convergence of rooted graphs was first introduced by Benjamini and Schramm in \cite{benjamini} {and Aldous and Steele in \cite{Aldous2004}.} {We now give a brief introduction from \cite{vdH2}.} 

A graph $G=(V(G), E(G))$ ({possibly} infinite) is called \textit{locally finite} if every vertex $v\in V(G)$ has finite degree (not necessarily uniformly bounded). A pair $(G,o)$ is called a {{\em rooted graph}, where $G$ is} rooted at $o\in V(G)$. For any two vertices $u,v\in V(G),~d_G(u,v)$ is defined as the length of the shortest path from $u$ to $v$ in $G$, {i.e., the minimal number of edges needed to connect $u$ and $v$}. We let $d_G(u,v)=\infty$ when $u$ and $v$ are not connected. 

For a rooted graph $(G,o)$, the {\em $r$-neighbourhood} of the root $o$, {denoted by $B_r^{\sss(G)}(o)$, is defined as the graph rooted at $o$ with vertex and edge sets given by}
\eqn{\label{eq:def:nbhd:root}
	\begin{split}
		V\big( B_r^{\sss(G)}(o) \big) =& \{ v\in V(G)\colon d_G(o,v)\leq r \},\quad\text{and}\\
		E\big( B_r^{\sss(G)}(o) \big) =& \left\{ \{ u,v \}\in E(G)\colon u,v\in V\big( B_r^{\sss(G)}(o) \big) \right\}\,.
\end{split}}
Let $(G_1,o_1)$ and $(G_2,o_2)$ be two rooted locally finite graphs with $G_1=(V(G_1),E(G_1))$ and $G_2=(V(G_2),E(G_2))$. Then we say that $(G_1,o_1)$ is {\em{rooted isomorphic}} to $(G_2,o_2)$, {which we denote as $(G_1,o_1)\simeq (G_2,o_2)$}, {when} there exists a graph isomorphism between $G_1$ and $G_2$ that maps $o_1$ to $o_2$, i.e., {when} there exists a bijection $\phi\colon V(G_1)\mapsto V(G_2)$ such that 
\eqn{\label{def:rooted:isomorphism}
	\begin{split}
		&\phi(o_1)=o_2,\qquad\text{and} \qquad\{u,v\}\in E(G_1) \iff \{\phi(u),\phi(v)\}\in E(G_2).
\end{split}} 

{\em Marks} are generally images of an injective function $\mathcal{M}$ acting on the vertices of a graph $G,$ as well as on the edges. 
{These} marks take values in a complete separable metric space $(\Xi,d_{\Xi})$. Rooted graphs with only vertices having marks are called \textit{vertex-marked} rooted graphs, {and are denoted} by $(G,o,\mathcal{M}(G))= (V(G),E(G),o,\mathcal{M}(V(G)))$. We say that two rooted vertex-marked graphs $(G_1,o_1,\mathcal{M}_1(G_1))$ and $(G_2,o_2,\mathcal{M}_2(G_2))$ are vertex-marked-isomorphic, {which we denote as $(G_1,o_1,\mathcal{M}_1(G_1))\overset{\star}{\simeq}(G_2,o_2,\mathcal{M}_2(G_2))$,} {when} there exists a bijective function $\phi\colon V(G_1)\mapsto V(G_2)$ such that
\begin{itemize}
	\item[$\rhd$] $\phi(o_1)=o_2$;
	\item[$\rhd$] $\{v_1,v_2\}\in E(G_1)$ if and only if $\{\phi(v_1),\phi(v_2)\}\in E(G_2)$;
	\item[$\rhd$] $\mathcal{M}_1(v)=\mathcal{M}_2(\phi(v))$ for all $v\in V(G_1)$.
\end{itemize}

{Let} $\mathcal{G}_\star$ be the vertex-marked isomorphism invariant class of rooted graphs. Similarly as in {\cite[Definition 2.11]{vdH2}}, one can define a metric $d_{\mathcal{G}_\star}$ as
\eqn{\label{for:def:distance:1}
	d_{\mathcal{G}_\star}\left( (G_1,o_1,\mathcal{M}_1(G_1)),(G_2,o_2,\mathcal{M}_2(G_2)) \right) = \frac{1}{1+R^\star}\, ,}
where
\eqan{\label{for:def:distance:2}
	R^\star = \sup\{ r\geq& 0: B_r^{(G_1)}(o_1)\simeq B_r^{(G_2)}(o_2),\mbox{and there exists a rooted isomorphism $\phi$ from $V(G_1)$}\nn\\
	&\mbox{to $V(G_2)$ such that }d_{\Xi}(\mathcal{M}_1(u),\mathcal{M}_2(\phi(u)))\leq \frac{1}{r},~\forall u\in V(B_r^{(G_1)}(o_1))\},}
which makes $\left(\mathcal{G}_\star,d_{\mathcal{G}_\star}\right)$ a Polish space.
We {next} define the notion of marked local convergence of vertex-marked random graphs on this space. {\cite[Theorem~2.14]{vdH2} describes various} notions of local convergence.
For the definition of local convergence, we consider that $ (G_n)_{n\geq 1}$ is a sequence of (possibly random) vertex-marked graphs with $G_n=\left( V(G_n),E(G_n),\mathcal{M}_n(V(G_n)) \right)$ that are finite (almost surely). Conditionally on $G_n,$ let $o_{n}$ be a randomly chosen vertex from $V(G_n)$. Note that $((G_n,o_n))_{n\geq 1}$ is a sequence of random variables defined on $\mathcal{G}_\star$. Then {vertex-marked local convergence is defined as follows:}
\begin{Definition}[Vertex marked local  convergence]\label{def:vertex:marked:local:convergence}
			The sequence $(G_n)_{n\geq 1}$ is said to converge {{{vertex-marked locally}}} in probability to a random element $(G,o,\mathcal{M}(V(G)))\in \mathcal{G}_\star$ having probability law $\mu_\star$, when for every $r>0$ and for every $(H_\star,\mathcal{M}_{H_\star}(H_\star))\in\mathcal{G}_\star$, as $n\to\infty$,
			\eqan{\label{eq:def:mark:local:convergence}
				&\frac{1}{n}\sum\limits_{\omega\in[n]}\one_{\left\{ B_r^{\sss(G_n)}({\omega})\simeq H_\star,d_{\mathcal{G}_\star}\big( (B_r^{\sss(G_n)}({\omega}),\omega,\mathcal{M}(V(B_r^{\sss(G_n)}({\omega}))),(H_\star,\mathcal{M}_{H_\star}(H_\star)) \big)\leq \frac{1}{r} \right\}}\\
				&\hspace{0.5cm}\overset{\prob}{\to} \mu_\star\left( B_r^{\sss(G)}({o})\simeq H_\star,~d_{\mathcal{G}_\star}\big( (B_r^{\sss(G)}({o}),o,\mathcal{M}(V(B_r^{\sss(G)}({o}))),(H_\star,\mathcal{M}_{H_\star}(H_\star)) \big)\leq \frac{1}{r} \right).\nn}
	\hfill$\blacksquare$
\end{Definition}
\subsection{The P\'{o}lya point tree}\label{sec:RPPT}
Berger, Borgs, Chayes and Saberi proved in \cite{BergerBorgs}, that the P\'olya point tree is the local limit of preferential attachment model. In this section, {we define the P\'olya point tree ($\PPT$) that will act as the vertex-marked local limit of our preferential attachment graphs. In \cite{RRR22}, the authors have proved that the random P\'olya point tree ($\RPPT$) is the vertex-marked local limit of a wide class of preferential attachment models with i.i.d.\ out-degrees. Further, the P\'olya point tree could be shown to be a special case of the random P\'olya point tree. We adapt the definition of $\PPT$ from \cite{BergerBorgs} and \cite{RRR22}. We start by defining the vertex set of this PPT:}

\begin{Definition}[Ulam-Harris set and its ordering]\label{def:ulam}
	{\rm Let $\N_0=\N\cup\{0\}$. {The {\em Ulam-Harris set}} is
		\begin{equation*}
			\mathcal{N}=\bigcup\limits_{n\in\N_0}\N^n.
		\end{equation*}
		For $x = x_1\cdots x_n\in\N^n$ and $k\in\N$, we denote {the element $x_1\cdots x_nk$ by $xk\in\N^{n+1}$.} The~{\em root} of the Ulam-Harris set is denoted by $\emp\in\N^0$.
		
		For any $x\in \mathcal{U}$, we say that $x$ has length $n$ if $x\in\N^n$. {The lexicographic \textit{ordering}} between the elements of the Ulam-Harris set {is as follows:}
		\begin{itemize}
			\item[(a)] for any two elements $x,y\in\mathcal{U}$, {$x>y$ when the length of $x$ is more than that of $y$;}
			\item[(b)] if $x,y\in \N^n$ for some $n$, then $x>y$ if there exists $i\leq n,$ such that $x_j=~y_j~\forall j<i$ and $x_i>~y_i$.\hfill$\blacksquare$
	\end{itemize}}
\end{Definition}

We use the elements of the Ulam-Harris set to identify nodes in a rooted tree, since the notation in Definition~\ref{def:ulam} allows us to denote the relationships between children and parents, where for $x\in\mathcal{U}$, we denote the $k$-th child of $x$ by the element $xk$. 
\smallskip

\paragraph{\bf P\'olya Point Tree ($\PPT$).}
The $\PPT(m,\delta)$ is an {\em infinite multi-type rooted random tree}, where $m$ and $\delta>-m$ are the parameters of preferential attachment models. {It is a multi-type branching process, with a mixed continuous and discrete type space. We now describe its properties one by one.}

\paragraph{ \bf Descriptions of the distributions and parameters used.}
\begin{enumerate}
	\item[{\btr}] {Define} $\chi = \frac{m+\delta}{2m+\delta}$.
	\smallskip
	\item[{\btr}] {Let} $\Gamma_{\sf{in}}(m)$ denote a Gamma distribution with parameters $m+\delta$ and $1$.
	\smallskip
	\item[{\btr}] {Let} $\Gamma_{\sf{in}}^\prime(m)$ denote the size-biased distribution of $\Gamma_{\sf{in}}(m)$, which is also a Gamma distribution with parameters $m+\delta+1$ and $1$.
\end{enumerate}
\smallskip

\paragraph{\bf Feature of the vertices of $\PPT$.}
Below, to avoid confusion, we use `node' for a vertex in the PPT and `vertex' for a vertex in the PAM. We now discuss the properties of the nodes in $\PPT(m,\delta)$. Every node except the root in the $\PPT$ has
\begin{enumerate}
	\item[{\btr}] a {\em label} $\omega$ in the Ulam-Harris set $\mathcal{N}$ (recall Definition~\ref{def:ulam});
	\smallskip
	\item[{\btr}] an {\em age} $A_{\omega}\in[0,1]$;
	\smallskip
	\item[{\btr}] a positive number $\Gamma_{\omega}$ called its {\em strength};
	\smallskip
	\item[{\btr}] a {label in $\{{\Old},{\Young}\}$} depending on the age of the {node} and its parent, {with $\Young$ denoting that the node is younger than its parent and $\Old$ denoting that the node is older than its parent.}
\end{enumerate}
Based on its type being $\Old$ or $\Young$, every {node} $\omega$ has a number $m_{-}(\omega)$ associated to it. If $\omega$ has type $\Old$, then $m_{-}(\omega)=m$, and $\Gamma_{\omega}$ is distributed as $\Gamma_{\sf{in}}^\prime( m )$,
while if $\omega$ has type $\Young$, then $m_{-}(\omega)=m-1$, and given $m_{-}(\omega), \Gamma_{\omega}$ is distributed as $\Gamma_{\sf{in}}( m )$.
\smallskip
\begingroup
\allowdisplaybreaks
\paragraph{\bf Construction of the PPT.} {We next use the above definitions to construct the PPT using an {\em exploration process}.}
The root is special in the tree. It has label $\emp$ and its age $A_\emp$ is an independent uniform random variable in $[0,1]$. The root $\emp$ has no label in $\{\Old,\Young\}$, and we let $m_{-}(\emp)=m$.
Then the {children of the root in the} P\'{o}lya point tree {are} constructed as follows:
\endgroup	
\begin{enumerate}
	\item Sample $U_1,\ldots,U_{m_{-}(\emp)}$ uniform random variables on $[0,1]$, independent of the rest;  
	\item To nodes $\emp1,\ldots, \emp m_{-}(\emp),$ assign the ages $U_1^{1/\chi} A_\emp,\ldots,U_{m_{-}(\emp)}^{1/\chi} A_\emp$ and type $\Old$;
	\item Assign ages $A_{\emp(m_{-}(\emp)+1)},\ldots, A_{\emp(m_{-}(\emp)+\din_\emp)}$ to nodes $\emp(m_{-}(\emp)+1),\ldots, \emp(m_{-}(\emp)+\din_\emp)$. These ages are the \FC{occurrence times} given by a conditionally independent Poisson point process on $[A_\emp,1]$ defined by the random intensity
	\eqn{\label{def:rho_emp}
		\rho_{\emp}(x) = {(1-\chi)}{\Gamma_\emp}\frac{x^{-\chi}}{A_\emp^{1-\chi}},}
	and $\din_\emp$ being the total number of points of this process. Assign type $\Young$ to them;
	\item Draw an edge between $\emp$ and each of $\emp 1,\ldots, \emp(m_{-}(\emp)+\din_\emp)$; 
	\item Label $\emp$ as explored and nodes $\emp1,\ldots, \emp(m_{-}(\emp)+\din_\emp)$ as unexplored.
\end{enumerate}	
Then, recursively over the elements in the set of unexplored nodes, we perform the following breadth-first exploration:
\begin{enumerate}
	\item Let $\omega$ denote the smallest currently unexplored node in the Ulam-Harris ordering;
	\item Sample $m_{-}(\omega)$ i.i.d.\ random variables $U_{\omega1},\ldots,U_{\omega m_{-}(\omega)}$ independently from all the previous steps and from each other, uniformly on $[0,1]$. To nodes $\omega 1,\ldots,\omega m_{-}(\omega)$ assign the ages $U_{\omega1}^{1/\chi}A_\omega,\ldots,U_{\omega m_{-}(\omega)}^{\FC{1/\chi}}A_\omega$ and type $\Old$, and set them unexplored;
	\item Let $A_{\omega(m_{-}(\omega)+1)},\ldots,A_{\omega(m_{-}(\omega)+\din_\omega)}$ be the random $\din_\omega$ points given by a conditionally independent Poisson process on $[A_{\omega},1]$ with random intensity
	\eqn{
		\label{for:pointgraph:poisson}
		\rho_{\omega}(x) = {(1-\chi)}{\Gamma_\omega}\frac{x^{-\chi}}{A_\omega^{1-\chi}}.
	}
	Assign these ages to $\omega(m_{-}(\omega)+1),\ldots,\omega(m_{-}(\omega)+\din_\omega)$. {Assign them type $\Young$,} and set them unexplored;
	\item Draw an edge between $\omega$ and each one of the nodes $\omega 1,\ldots,\omega(m_{-}(\omega)+\din_\omega)$;
	\item Set $\omega$ as explored.
\end{enumerate}
We call the resulting tree the {\em P\'olya point tree with parameters $m$ and $\delta$,} and denote it by $\PPT(m,\delta)$. Occasionally we drop $m$ and $\delta$ when referring to $\PPT(m,\delta)$. The vertex-marks of $\PPT$ here are different from the ones in \cite{BergerBorgs}, but we can always retrieve these vertex-marks in the definition in \cite{BergerBorgs} from the vertex-marks used here and the other way round also.
\subsection{Percolation on large-set expanders}
We define several notations that will be used throughout the article. Let $G=(V(G),E(G))$ (as before) be an undirected multi-graph with self-loops and multi-edges. The degree of a vertex $u\in V$ is denoted by $d_G(u)$. Every self-loop contributes $2$ to the degree of the vertex. For any $S\subset V$, define the volume of the subset $S$ as
\eqn{\label{def:vol:subset}
	\vol_G(S)=\sum\limits_{v\in S}d_G(v)~,}
and the cutset of $S$ is defined as
\eqn{\label{def:cutset:subset}
	\cut_G(S,S^c) = \{ e\in E: e\quad \text{has one end in $S$ and the other end in $S^c$} \}~.}
For every $\varepsilon>0$, we define the edge-expander constant of the graph $G$ as
\eqn{\label{def:expander-constant:graph}
	\alpha(G,\varepsilon)=\min\limits_{\substack{S\subset V\\\varepsilon|V|\leq |S|\leq|V|/2}}\frac{|\cut_G(S,S^c)|}{|S|}~.}
We call a graph $G$ an $(\alpha,\varepsilon,d)$-large-set expander if the graph has average degree bounded above by $d$ and $\alpha(G,\varepsilon)\geq \alpha$. A sequence of (possibly random) graphs $(G_n)_{n\geq 1}$ is called a large-set expander sequence with bounded average degree if there exists a $d<\infty$, and for every $\varepsilon\in (0,1/2)$ there exists $\alpha>0$  such that,
\eqn{\label{def:large-set-expander-sequence}
	\prob(G_n \text{ is $(\alpha,\varepsilon,d)$-large-set expander})\to 1,\quad \text{as }n\to\infty~.
}

Alimohammadi, Borgs and Saberi \cite{ABS22} studied percolation on such sequences of large-set expanders with bounded average degree. Let $(G_n)_{n\geq 1}$ be a (possibly random) sequence of large-set expanders with bounded average degree and $\mu$ be a probability distribution on $\mathcal{G}_\star$. For any graph $G\in\mathcal{G}_\star$, we use $G(\pi)$ to denote the subgraph of $G$ obtained after performing bond percolation on $G$ with percolation probability $\pi$. Suppose $(G_n)_{n\geq 1}$ converges locally to $(G,o)\in\mathcal{G}_\star$ with law $\mu$. We define $\Ccal_i^{\sss(n)}(\pi)$ as the $i$-th largest component of the graph $G_n(\pi)$ (breaking ties arbitrarily) and $\mathscr{C}(\pi)$ as the cluster containing the vertex $o$ in $G(\pi)$. Let $\zeta(\pi):= \mu\big( |\mathscr{C}(\pi)|=\infty\big)$. According to \cite[Theorem~1.1]{ABS22}, if $\pi\mapsto\zeta(\pi)$ is continuous at all $\pi\in[0,1]$, then 
\eqn{\label{expander:percolation-result:ABS22}
	\frac{|\Ccal_1^{\sss(n)}(\pi)|}{n}\overset{\prob}{\to}\zeta(\pi),}
where $\overset{\prob}{\to}$ denotes convergence in probability with respect to both the random graph and percolation. Furthermore, for all $\pi\in[0,1]$, ${|\Ccal_2^{\sss(n)}(\pi)|}/{n}\overset{\prob}{\to}0$, and this convergence is uniform in $\pi$ if $\pi$ is bounded away from $0$ and $1$.

\subsection{Proof strategy} 
In this section, we briefly discuss the brief structure of the proof of Theorem~\ref{thm:main:theorem}. Fix $m\in\N\setminus{\{1\}}$ and $\delta>-m$. Let us consider an initial graph of size $2$, and one of the initial vertices has degree at most $m$. Without loss of generality consider $a_2\leq m$. Now we construct a preferential attachment model with parameters $m$ and $\delta$ and $G_{n}$ is as defined in Section~\ref{subsec:main-theorem}. 
Firstly, we establish the large-set expander property of models (a), (b) and (d) with bounded average degree, following the strategy of Mihail, Papadimitrou and Saberi \cite{MPS06}:
\begin{Proposition}\label{prop:Cheeger-value:model-b}
	Let $(G_n)_{n\geq 2}$ be either preferential attachment model (a), (b) and (d) sequence and $m\geq 2$. Then for any constant $\vep>0$, there exists $\alpha>0$ such that, $(G_{n})_{n\geq1}$ is $(\alpha,\vep,m)$-large-set expander. In particular, there exists $c=c(\infty)>0$ such that,
	\eqn{\label{eq:prop:Cheeger-value:model-b}
		\prob(\alpha(G_{n},\vep)<\alpha)= o(\e^{-cn})~.}
\end{Proposition}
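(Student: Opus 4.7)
\smallskip
\noindent\emph{Proof plan.} The plan is to follow the Mihail--Papadimitrou--Saberi strategy from \cite{MPS06}: combine a union bound over candidate ``bad'' subsets with an exponential concentration inequality for the cut. Fix $\varepsilon>0$ and a target constant $\alpha>0$ to be chosen small. Since at most $2^n$ subsets $S\subset[n]$ satisfy $\varepsilon n\leq |S|\leq n/2$, it suffices to prove, for every fixed such $S$,
\[
\prob\bigl(|\cut_{G_n}(S,S^c)|<\alpha|S|\bigr)\;\leq\;\e^{-Cn}
\]
with $C>\log 2$ depending only on $\varepsilon,m,\delta$; a union bound then yields the claim with exponential rate $c=C-\log 2>0$. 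Because models (a), (b) and (d) differ only in the normalizers $c_{v,j}^{(s)}$ in \eqref{def:model:a}--\eqref{def:model:d} and in whether the first edge may be a self-loop, a single framework will cover all three, with routine adjustments per case.

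For a fixed $S$ with $|S|=k$, decompose the cut as a sum of Bernoulli indicators over edge-draws,
\[
|\cut_{G_n}(S,S^c)|\;=\;\sum_{v=3}^{n}\sum_{j=1}^{m}\one\bigl\{v\overset{j}{\rightsquigarrow}u,\ \one\{u\in S\}\neq\one\{v\in S\}\bigr\},
\]
which, conditionally on the history $\PA^{(s)}_{v,j-1}(m,\delta)$, is a sum of Bernoullis with parameters dictated by the attachment rule. The associated Doob martingale has increments bounded by $1$, so Azuma's inequality gives
\[
\prob\bigl(|\cut_{G_n}(S,S^c)|\leq \E|\cut_{G_n}(S,S^c)|-t\bigr)\;\leq\;\exp\!\bigl(-t^2/(2mn)\bigr).
\]
It therefore suffices to establish a uniform lower bound $\E|\cut_{G_n}(S,S^c)|\geq\beta n$ with $\beta=\beta(\varepsilon,m,\delta)>0$; taking $t=\beta n/2$ and $\alpha=\beta/4$ then produces the required $\e^{-Cn}$ tail.

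The heart of the proof is this expectation lower bound. Use the identity $|\cut_{G_n}(S,S^c)|=\vol_{G_n}(S)-2\cdot(\text{edges with both endpoints in }S)$ together with the trivial lower bound $\vol_{G_n}(S)\geq mk$, and upper-bound the expected number of internal edges. When $v\in S$ draws its $j$-th edge, the conditional probability that it lands in $S\cap[v-1]$ is at most $(\vol_{G_n}(S\cap[v-1])+\delta k)/c^{(s)}_{v,j}$; summing over $v$ and $j$ and invoking the standard preferential-attachment estimate $\E[\vol_{G_n}(S\cap[v-1])]\lesssim(v/n)^{1-\chi}\E[\vol_{G_n}(S)]$ (see, e.g., \cite{vdH2}) yields an expected internal-edge count strictly less than $\vol_{G_n}(S)/2$ whenever $k\leq n/2$. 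The main obstacle is extracting a \emph{uniform} $\beta$ over all admissible $S$, since this bound can degrade when $S$ concentrates on either the oldest or the youngest vertices. We plan to handle this by partitioning the vertex age interval $[0,1]$ into finitely many blocks whose number depends only on $\varepsilon$, writing $S=\bigsqcup_\ell S_\ell$, and showing block-by-block that a constant fraction of edges incident to $S_\ell$ crosses the cut. The hypothesis $m\geq 2$ enters at exactly this point: it guarantees enough cycles to create the slack between the expected internal-edge count and $\vol_{G_n}(S)/2$. For $m=1$ the graph is a tree, which can never be an expander, explaining why our argument (and the theorem) excludes that regime.
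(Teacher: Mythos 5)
Your proof proposal does not follow the paper's route, and as written it contains a genuine gap that the Azuma-plus-union-bound strategy cannot close.

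First, the attribution is off: \cite{MPS06} do \emph{not} prove expansion by a bounded-differences concentration inequality. Their method (and the paper's) works in the \emph{pre-collapsed} tree of mini-vertices. It conditions on the exact set $A$ of FIT mini-vertices and computes, by telescoping the preferential-attachment conditional probabilities, a closed-form upper bound on the probability that every mini-vertex outside $A$ is ILL (Lemma~\ref{lem:intermediate:Cheeger-value}). This yields the sharp ratio of binomial coefficients $\binom{mk}{\alpha k}/\binom{m_{[n]}-\alpha k}{mk-\alpha k}$, which is then shown, after summing over $S$ and $A$ and analysing the function $g(x)$ in \eqref{for:prop:Cheegar-value:model-b:3}, to decay as $\e^{-cn}$.

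Your route replaces this with Azuma's inequality for the Doob martingale of the cut, combined with a union bound over all admissible $S$. Quantitatively this cannot succeed. With increments bounded by $1$, the exponent from Azuma is $t^2/(2mn)$ with $t = \E|\cut_{G_n}(S,S^c)| - \alpha|S|$. To absorb $\binom{n}{k}\approx \e^{nH(k/n)}$ (or your blanket $2^n$), you would need $t \gtrsim n\sqrt{2m\log 2}$. But the total number of edges is only about $mn$, so for $m=2$ you would need the expected cut to exceed roughly $1.7n$, i.e., more than $80\%$ of all edges must cross the cut uniformly over \emph{every} $S$ with $\varepsilon n \le |S| \le n/2$ — plainly false (take $S$ a small set concentrated on young vertices, or $S$ the oldest half). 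Even for larger $m$ the required $\beta$ scales like $\sqrt{m}$ while $\E|\cut|/n$ is $O(m)$, and no choice of constants makes the needed inequality hold for small $\varepsilon$. The deeper point is that $|\cut_{G_n}(S,S^c)|$ does not concentrate tightly enough around its mean for the crude $2^n$-union-bound to be salvageable; the Mihail--Papadimitriou--Saberi argument avoids this entirely by bounding, for fixed $(S,A)$, the probability of the \emph{exact} FIT/ILL configuration rather than using a tail bound, and the resulting binomial ratio is exponentially small in a way that \emph{does} beat the $\binom{n}{k}\binom{m_{[n]}}{\alpha k}$ enumeration.

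Two more minor misalignments: your sketch of the uniform lower bound on $\E|\cut|$ via the identity $|\cut| = \vol_{G_n}(S) - 2(\text{internal edges})$ and a ``block-by-block'' age decomposition is left unsubstantiated and is exactly where the hard work would be (it is not a standard estimate). And your explanation of where $m\ge 2$ enters is different from the paper's: in the paper it is used because the decisive exponent $(m-1-2\alpha)k$ in \eqref{for:prop:Cheegar-value:model-b:07} must be positive, not because of a slack in an internal-edge count.

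To repair your argument you would essentially have to rebuild the mini-vertex/FIT-ILL machinery, at which point you are reproducing the paper's proof.
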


Next, we delve into the analysis of the survival probability of the percolated P\'olya point tree, which is the local limit of our preferential attachment models. We investigate the survival probability of the percolated P\'olya point tree in two regime: $\delta\leq 0$ and $\delta>0$.
For $\delta\leq 0$, we prove the following proposition: 
\begin{Proposition}\label{prop:criticality:negative-delta}
	Fix $m\geq 2$ and $\delta\in(-m,0]$. Then, the critical percolation threshold of the P\'olya point tree with parameters $m$ and $\delta$ is $0$.
\end{Proposition}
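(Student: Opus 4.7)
The plan is to show, for any $\pi\in(0,1]$, that the $\pi$-percolated $\PPT$ has an infinite cluster through its root with positive probability, by exhibiting a supercritical branching process that is stochastically dominated by this cluster. The key structural feature, available because $\chi=(m+\delta)/(2m+\delta)\leq 1/2$ when $\delta\leq 0$, is the following: the Old-child age density $\chi\,a'^{\chi-1}/a^\chi$ on $[0,a]$ concentrates so strongly near $0$ that $\int_0^{a_-}a'^{2\chi-2}\,da'=\infty$, while, conditionally on an Old child $u$ of $v$ having small age $A_u$, the Poisson intensity $(1-\chi)\Gamma_u x^{-\chi}/A_u^{1-\chi}$ for the Young children of $u$ diverges as $A_u\downarrow 0$. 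Together these two effects will yield infinitely many Young grandchildren of $v$ in expectation, even after arbitrary Bernoulli percolation.

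Fix $0<\eta<a_-<a_+<1$ to be chosen later, set $I:=[a_-,a_+]$, and call a node $v$ of the $\PPT$ a \emph{super-individual} when $A_v\in I$. Declare its super-offspring to be those Young grandchildren $w$ of $v$ reached through some Old child $u$ of $v$ with $A_u\in[\eta,a_-]$ and $A_w\in I$, provided both edges $\{v,u\}$ and $\{u,w\}$ survive percolation. The super-offspring sets of distinct super-individuals are disjoint by the tree structure of the $\PPT$, and their counts are independent by the recursive construction of the $\PPT$ combined with the independence of bond percolation. Using $m_{-}(v)\geq m-1$, $\E[\Gamma_u]=m+\delta+1$ for an Old child $u$, and the conditional density of $A_u$, I would derive the pointwise lower bound
\begin{equation*}
\E[X_v]\;\geq\;(m-1)\,\pi^2\,(m+\delta+1)\,\chi\,a_+^{-\chi}\,\bigl(a_+^{1-\chi}-a_-^{1-\chi}\bigr)\int_\eta^{a_-}a'^{2\chi-2}\,da',
\end{equation*}
uniformly for $A_v\in I$. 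Since the last integral diverges as $\eta\downarrow 0$ (polynomially like $\eta^{2\chi-1}$ for $\chi<1/2$ and logarithmically like $\log(a_-/\eta)$ for $\chi=1/2$), for any $\pi>0$ one can first fix $a_-<a_+$ and then take $\eta$ small enough to force $\E[X_v]>1$.

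The super-individuals together with their super-offspring then form a multi-type branching process with types in $I$, whose offspring count stochastically dominates a single-type Galton--Watson offspring $X^\star$ with $\E[X^\star]>1$, obtained by replacing $A_v$ by its worst-case value $a_+$ in the derivation above. This reduced process survives with positive probability, and since $A_\emp$ is uniform on $[0,1]$, conditioning on $A_\emp\in I$ (an event of probability $a_+-a_->0$) makes the root itself a super-individual. Combined with the stochastic domination, this yields $\zeta(\pi)>0$ for every $\pi>0$, completing the proof of $\pi_c=0$ for $\delta\in(-m,0]$. The main delicate point is verifying that the lower bound on $\E[X_v]$ is genuinely uniform in $A_v\in I$ and that the reduced GW process is indeed stochastically dominated by the cluster of the root in the percolated $\PPT$; once these are in place, the conclusion follows by elementary Galton--Watson comparison.
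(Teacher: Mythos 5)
Your proposal is correct and rests on the same underlying mechanism as the paper's proof (namely that $\chi=(m+\delta)/(2m+\delta)\leq 1/2$ for $\delta\leq 0$ makes a two-step ``elbow'' kernel non-integrable), but the route is genuinely different in its details. The paper's elbow is $\Old\to\Young\to\Old$ (age goes up then down): it defines elbow-children of an $\Old$ node, prunes everything to ages $\leq h$, shows by its Lemma~\ref{lem:stoch-dom} that the age-$h$ offspring law $F_{\pi,h}$ is stochastically smallest, and observes that its mean $\asymp h^{2\chi-1}\to\infty$ as $h\downarrow 0$. Your elbow is the mirror image, $v\to\Old\to\Young$ (age goes down then up): you box the super-individuals' ages into a fixed window $I=[a_-,a_+]$ bounded away from $0$ and $1$, and send only the $\Old$ intermediary's lower age-cutoff $\eta$ to $0$, with the same polynomial/logarithmic divergence $\int_\eta^{a_-}a'^{2\chi-2}\,da'$. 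Both buy the same thing---a supercritical single-type Galton--Watson lower bound---with essentially the same amount of bookkeeping. The one point you flag as delicate (uniformity and stochastic domination over $A_v\in I$) does go through, by the same kind of monotone-coupling argument the paper uses in Lemma~\ref{lem:stoch-dom}: given $A_v=a$, the number of $\Old$ children with age in $[\eta,a_-]$ is $\mathrm{Binomial}\big(m_-(v),(a_-^\chi-\eta^\chi)/a^\chi\big)$ with success probability decreasing in $a$, while the conditional age density of those $\Old$ children on $[\eta,a_-]$ (proportional to $a'^{\chi-1}$) and the subsequent Poisson offspring do not depend on $a$; hence the super-offspring count is stochastically decreasing in $A_v$ and $A_v=a_+$ is the worst case, as you assumed. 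So the argument is sound; it is an equally valid alternative to the paper's construction.
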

The proof of this proposition uses an interesting property of the P\'olya point tree, namely the ``elbow children". We do not use any functional analytic tools to prove this proposition in contrast to \cite{ABS22}. We prove that for non-positive $\delta$ and any $\pi>0$, the P\'olya point tree, percolated with probability $\pi$, stochastically dominates a super-critical single-type branching process.

On the other hand, for $\delta>0$, the critical percolation threshold turns out to be positive, and we need to investigate the mean offspring generator of P\'olya point tree in detail. For any $m\geq 1$ and $\delta>-m$, from \cite[(5.4.86)]{vdH2} the kernel function of the mean offspring operator of P\'olya point tree is given by 
\eqn{\label{eq:kernel:offspring-operator:PPT}
	\kappa\big((x,s), (y,t)\big)=\frac{c_{st}(\one_{\{x>y, t=\old\}}+\one_{\{x<y, t=\young\}})}{(x\vee y)^{\chi}(x\wedge y)^{1-\chi}},
}
with $\chi=(m+\delta)/(2m+\delta)\in (0,1)$ and 
\eqn{
	\label{eq:def-cst}
	c_{st}=
	\begin{cases}
		\frac{m(m+\delta)}{2m+\delta} &\text{ for }st=\Old\Old,\\
		\frac{m(m+1+\delta)}{2m+\delta} &\text{ for }st=\Old\Young,\\
		\frac{(m-1)(m+\delta)}{2m+\delta} &\text{ for }st=\Young\Old,\\
		\frac{m(m+\delta)}{2m+\delta} &\text{ for }st=\Young\Young.
	\end{cases}
}
Instead of confining our analysis to the restricted space, we opt to extend the type space to $\Scal_e=[0,\infty)\times \{ \Old,\Young \}$ while keeping the kernel unchanged. This extension allows us to incorporate vertices that will join the preferential attachment graph after time $n$. By growing the $\PPT$ on this extended type-space and subsequently truncating all nodes with an age greater than $1$, we retrieve the $\PPT$. Let $\bar{\bfT}_{\kappa}$ and ${\bfT}_{\kappa}$ denote the mean offspring operator defined on $L^2(\Scal_e,\lambda_e)$ and $L^2(\Scal,\lambda)$, respectively, where $\lambda$ and $\lambda_e$ are the Lebesgue measures on the continuous part of $\Scal$ and $\Scal_e$, respectively.
The following theorem identifies an eigenvalue of $\bar{\bfT}_{\kappa}$ along with the corresponding eigenfunction, and this eigenvalue is the spectral norm of the mean offspring operator also irrespective of the type space being $\Scal$ or $\Scal_e$:
\begin{Theorem}[Spectral norm of the offspring generator]
	\label{thm:operator-norm:PPT}
	Fix $m\geq 1$ and $\delta>0$. Let $r(\bfT_{\kappa})$ and $r(\bar{\bfT}_{\kappa})$ denote the spectral norms of $\bfT_{\kappa}$ and $\bar{\bfT}_{\kappa}$, respectively. Then
	\eqn{
		\label{nu-equality-PAM}
		r({\bfT}_{\kappa})=r(\bar{\bfT}_{\kappa})=2\frac{m(m+\delta)+\sqrt{m(m-1)(m+\delta)(m+1+\delta)}}{\delta}.
	}
	Additionally, the explicit eigenfunction of $\bar{\bfT}_\kappa$ corresponding to $r(\bar{\bfT}_{\kappa})$ is $f(x,s)=\bfp_s/\sqrt{x}$, where $\bfp=(\bfp_{\old},\bfp_{\young})$ is the right eigenvector of $\bfM$ defined by
	\[
		\bfM=\begin{bmatrix}
			c_{\old\old} & c_{\old\young}\\
			c_{\young\old} & c_{\young\young}
		\end{bmatrix}~.
	\]
\end{Theorem}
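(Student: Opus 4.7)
My plan for establishing \eqref{nu-equality-PAM} proceeds in four stages: (1) verify the formal eigenvalue equation for the guess $f(x,s)=\bfp_s/\sqrt{x}$; (2) upgrade this to a Weyl sequence placing $r$ in $\sigma(\bar\bfT_\kappa)$; (3) use a logarithmic change of variables and a Fourier transform to pin down $\sigma(\bar\bfT_\kappa)$; and (4) transfer the equality to $\bfT_\kappa$. For Stage~1, I would plug $f(x,s)=\bfp_s/\sqrt{x}$ into $\bar\bfT_\kappa$ and split the $y$-integral at $y=x$; the indicators in \eqref{eq:kernel:offspring-operator:PPT} retain only the old term on $(0,x)$ and only the young term on $(x,\infty)$. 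Each integral is a single power $y^{\chi-3/2}$ or $y^{-\chi-1/2}$, convergent precisely because $\chi>1/2 \iff \delta>0$, and each evaluates to $x^{\pm(\chi-1/2)}/(\chi-1/2)$. Collecting terms gives
\[
(\bar\bfT_\kappa f)(x,s)=\frac{(\bfM\bfp)_s}{(\chi-1/2)\sqrt{x}}=\frac{\mu_+}{\chi-1/2}\,f(x,s),
\]
where $\bfp$ is the right Perron eigenvector of $\bfM$ with Perron value $\mu_+=c_{\old\old}+\sqrt{c_{\old\young}c_{\young\old}}$. Substituting $\chi-1/2=\delta/(2(2m+\delta))$ makes the eigenvalue equal to $r$.

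Stage~2 addresses that $f\notin L^2(\Scal_e,\lambda_e)$ by constructing the Weyl sequence $f_N(x,s)=\bfp_s x^{-1/2}\one_{[1,N]}(x)$, with $\|f_N\|_2^2\sim\log N$. Repeating the Stage~1 computation, $\bar\bfT_\kappa f_N-rf_N$ consists only of the tail integrals over $y\in(0,1)\cup(N,\infty)$, whose $L^2$-norms are bounded uniformly in $N$. Hence $\|\bar\bfT_\kappa f_N-rf_N\|_2/\|f_N\|_2\to 0$, so $r\in\sigma(\bar\bfT_\kappa)$ and $r(\bar\bfT_\kappa)\geq r$.

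For Stage~3 I would use the isometry $(Uh)(u,s)=e^{u/2}h(e^u,s)$ from $L^2([0,\infty),dx)$ onto $L^2(\R,du)$, which conjugates $\bar\bfT_\kappa$ into the translation-invariant operator
\[
(\tilde\bfT g)(u,s)=c_{s\old}\int_{-\infty}^u e^{-\beta(u-v)}g(v,\old)\,dv+c_{s\young}\int_u^\infty e^{-\beta(v-u)}g(v,\young)\,dv,
\]
with $\beta=\chi-1/2>0$. Fourier-transforming in $u$ diagonalises $\tilde\bfT$ into pointwise multiplication by the $2\times 2$ symbol $A(\xi)=\bfM\,\mathrm{diag}(1/(\beta+i\xi),\,1/(\beta-i\xi))$, so by the standard spectral theory of matrix-valued Fourier multipliers $\sigma(\bar\bfT_\kappa)=\overline{\bigcup_\xi\sigma(A(\xi))}$. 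The characteristic polynomial of $A(\xi)$ is $\lambda^2-\frac{2a\beta}{\beta^2+\xi^2}\lambda+\frac{a^2-bc}{\beta^2+\xi^2}=0$, with $a=c_{\old\old}=c_{\young\young}$, $b=c_{\old\young}$, $c=c_{\young\old}$. A case analysis separating the real-root regime $\xi^2\leq\beta^2 bc/(a^2-bc)$ from the complex-conjugate regime $\xi^2\geq\beta^2 bc/(a^2-bc)$ shows $\sup_\xi\max_\pm|\lambda_\pm(\xi)|=(a+\sqrt{bc})/\beta=r$, attained uniquely at $\xi=0$. Thus $r(\bar\bfT_\kappa)\leq r$, and together with Stage~2 this gives $r(\bar\bfT_\kappa)=r$.

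Stage~4 transfers the equality to $\bfT_\kappa$. The lower bound $r(\bfT_\kappa)\geq r$ follows from the rescaled Weyl sequence $\bfp_s x^{-1/2}\one_{[1/N,1]}(x)$, which lies in $L^2(\Scal,\lambda)$. For the upper bound, observe that for any $g\in L^2_+(\Scal,\lambda)$ the zero-extension $\tilde g\in L^2_+(\Scal_e,\lambda_e)$ satisfies $\|\tilde g\|_2=\|g\|_2$ and, by positivity of $\kappa$ and induction on $n$, $(\bfT_\kappa^n g)(x,s)\leq(\bar\bfT_\kappa^n\tilde g)(x,s)$ for $x\in[0,1]$; hence $\|\bfT_\kappa^n\|\leq\|\bar\bfT_\kappa^n\|$ and $r(\bfT_\kappa)\leq r(\bar\bfT_\kappa)=r$. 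I expect the main obstacle to be Stage~3: because $\bar\bfT_\kappa$ is non-normal (its operator norm equals $\sigma_{\max}(\bfM)/\beta$, strictly larger than $r$ in general), the upper bound on the spectral radius cannot come from a norm estimate and must be extracted from the explicit multiplier-spectrum computation, with the delicate verification that the maximum modulus of $\sigma(A(\xi))$ is attained exactly at $\xi=0$.
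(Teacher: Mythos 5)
Your proposal is correct and takes a genuinely different route from the paper's. The paper obtains the upper bound $r(\bar{\bfT}_{\kappa})\le r$ from a Schur-test bound on the \emph{operator norm} of $\bar{\bfT}_{\kappa}$ (Lemma~\ref{lem:upperbound}) together with Gelfand's inequality $r(\bar{\bfT}_{\kappa})\le\|\bar{\bfT}_{\kappa}\|$, the lower bound from the formal eigenfunction $f(x,s)=\bfp_s/\sqrt{x}$, and the transfer to $\bfT_{\kappa}$ via a numerical-radius argument. You instead bypass any operator-norm estimate: the logarithmic substitution conjugates $\bar{\bfT}_{\kappa}$ into a convolution operator, the Fourier transform diagonalises it into the $2\times2$ multiplier $A(\xi)=\bfM\,\mathrm{diag}\big((\beta+i\xi)^{-1},(\beta-i\xi)^{-1}\big)$, and the spectral radius $\sup_{\xi}r(A(\xi))$ is computed directly, with your Stage~2 Weyl sequence rigorously placing $r$ inside $\sigma(\bar{\bfT}_{\kappa})$ (which the paper glosses over, since $f\notin L^2(\Scal_e)$). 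Your Stage~4 comparison-by-positivity argument for $r(\bfT_{\kappa})\le r(\bar{\bfT}_{\kappa})$ is also clean and self-contained.

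Your parenthetical remark in Stage~3 is in fact substantive and worth flagging. Your computation shows $\|\bar{\bfT}_{\kappa}\|=\sigma_{\max}(\bfM)/\beta$, which \emph{strictly exceeds} $r=\lambda_{\sss\bfM}/\beta$ whenever $c_{\old\young}\ne c_{\young\old}$ (i.e.\ whenever $\bfM$ is not normal, which happens for all $m\ge 2$). This means the Schur-test argument in the paper's Lemma~\ref{lem:upperbound} cannot actually give the claimed bound $\|\bar{\bfT}_{\kappa}\|\le r$: the requirement $\bfM\bfp=\lambda_{\sss\bfM}\bfq$, $\bfM^*\bfq=\lambda_{\sss\bfM}\bfp$ forces $\bfM^*\bfM\bfp=\lambda_{\sss\bfM}^2\bfp$, i.e.\ $\lambda_{\sss\bfM}$ would have to be a singular value of $\bfM$, which the Perron eigenvalue of a non-normal positive matrix is not. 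The theorem's conclusion about the \emph{spectral} radius is nevertheless correct, and your multiplier argument is the one that actually closes this gap, since the non-normality $\|\bar{\bfT}_{\kappa}\|>r(\bar{\bfT}_{\kappa})$ rules out any purely norm-based upper bound on the spectral radius.
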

It can further be shown that the operator norms of these operators are also equal and equal to their spectral norm.
Although we do not make use of this fact for proving the critical percolation threshold of P\'olya point tree, this result is interesting in its own right.


Next we prove that for $\pi\leq 1/r(\bar{\bfT}_{\kappa})$, the percolated P\'olya point tree is subcritical, showing the required left continuity of the survival probability function $\zeta$, whereas the right continuity follows in general for percolation models from \cite[Lemma~8.9]{Grimmet99}: 
\begin{Proposition}\label{lem:subcriticality}
	For $\delta>0$, the $\PPT$ dies out almost surely if it is percolated with probability $\pi\leq 1/r(\bar{\bfT}_\kappa)$.
\end{Proposition}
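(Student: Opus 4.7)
The plan is to build a nonnegative supermartingale from the explicit eigenfunction of $\bar{\bfT}_\kappa$ provided by Theorem~\ref{thm:operator-norm:PPT} and to exploit a strictly positive gap between the restricted operator $\bfT_\kappa$ and its extension $\bar{\bfT}_\kappa$ in order to force almost sure extinction uniformly for all $\pi\in(0,1/r]$ (the case $\pi=0$ being trivial), where $r:=r(\bar{\bfT}_\kappa)$. Let $f(x,s):=\bfp_s/\sqrt{x}$, let $\mathcal{G}_n$ be the set of depth-$n$ nodes in the $\pi$-percolated $\PPT$, and set $M_n:=\sum_{u\in\mathcal{G}_n}f(A_u,s_u)$. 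By the branching property, independence of the percolation variables, and the definition of the mean-offspring operator $\bfT_\kappa$ on $[0,1]\times\{\Old,\Young\}$,
\[
\E[M_{n+1}\mid\mathcal{F}_n]\;=\;\pi\sum_{u\in\mathcal{G}_n}(\bfT_\kappa f)(A_u,s_u).
\]

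The key decomposition is
\[
(\bfT_\kappa f)(x,s)\;=\;(\bar{\bfT}_\kappa f)(x,s)-I(x,s)\;=\;r\,f(x,s)-I(x,s),\qquad I(x,s):=\int_1^{\infty}\!\!\sum_{t\in\{\Old,\Young\}}\kappa\big((x,s),(y,t)\big)\,f(y,t)\,dy.
\]
For $y>1\ge x$ only the $t=\Young$ contribution in $\kappa$ survives, and a short explicit computation (whose improper integral converges precisely because $\chi>1/2$, equivalently $\delta>0$) yields $I(x,s)=c_{s\Young}\bfp_\Young/[(\chi-1/2)\,x^{1-\chi}]$, which is uniformly bounded below by some $c_1>0$ on $(0,1]\times\{\Old,\Young\}$. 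Writing $J_n:=\sum_{u\in\mathcal{G}_n}I(A_u,s_u)\ge c_1 Z_n$ with $Z_n:=|\mathcal{G}_n|$ and using $\pi r\le 1$,
\[
\E[M_{n+1}\mid\mathcal{F}_n]\;=\;\pi r\, M_n-\pi J_n\;\le\;M_n-\pi J_n.
\]
Taking expectations and telescoping (equivalently, applying the Doob decomposition of the nonnegative supermartingale $M_n$) yields $\pi\sum_{n\ge 1}\E[J_n]\le \E[M_1]<\infty$. The finiteness of $\E[M_1]$ is a direct computation using $A_\emp\sim\mathrm{Uniform}[0,1]$, finite expectation of the root strength $\Gamma_\emp$, and integrability of $a^{-1/2}$, $u^{-1/(2\chi)}$, and $a^{-(1-\chi)}$ on $[0,1]$ (all three finite because $\chi\in(1/2,1)$). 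Combining $J_n\ge c_1 Z_n$ with summability gives $\sum_n Z_n<\infty$ almost surely, and since $Z_n\in\N_0$, we conclude $Z_n=0$ eventually, i.e., the $\pi$-percolated $\PPT$ dies out almost surely.

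The main obstacle is the critical boundary $\pi=1/r$: using only the crude bound $\bfT_\kappa f\le r f$, the process $M_n$ degenerates to a nonnegative martingale, which converges almost surely but not necessarily to zero, and the trivial inequality $M_n\ge(\min_s\bfp_s)Z_n$ only bounds $Z_n$ by the (possibly nontrivial) martingale limit $M_\infty$. The resolution is the explicit positive leakage $I$ generated by embedding the $\PPT$ inside the extended branching process on $\Scal_e=[0,\infty)\times\{\Old,\Young\}$; the uniform positive lower bound on $I$ over $(0,1]$ --- valid precisely when $\chi>1/2$, i.e.~$\delta>0$ --- supplies the dissipation needed to rule out survival even at criticality, mirroring the sign-of-$\delta$ split in Theorem~\ref{thm:main:theorem}.
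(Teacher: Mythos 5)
Your proposal is correct, and it takes a genuinely different route from the paper's. The paper also works with the nonnegative supermartingale built from the eigenfunction $f$, but it proves extinction indirectly on the extended type space $\Scal_e$: it defines the events $\mathcal{A}_n(N)$ that some depth-$n$ node has age below $N$, sets up the stopping times $T_k$ (first large increment after time $k$) and $S_k$ (first occurrence of $\mathcal{A}_n$ after $k$), and shows via a coupling/resampling argument that $\prob(|X_\pi^{(n+1)}-X_\pi^{(n)}|>1\mid\mathcal{A}_n)$ is bounded away from $0$ uniformly in $n$; the a.s. convergence of the supermartingale then forces $\mathcal{A}_n$ to occur only finitely often, so the minimal age diverges and the age-$\le 1$ truncation (the actual $\PPT$) is a.s. finite. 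Your argument stays entirely on $\Scal=(0,1]\times\{\Old,\Young\}$ and replaces this ``large-jump" machinery with the clean operator decomposition $\bfT_\kappa f=\bar{\bfT}_\kappa f - I = r f - I$: the leakage term $I(x,s)=c_{s\young}\bfp_\young\big/\big((\chi-1/2)x^{1-\chi}\big)$ is uniformly bounded below on $(0,1]$ precisely because $\chi>1/2$ (i.e. $\delta>0$), and telescoping the resulting drift inequality gives $\pi\sum_n\E[J_n]<\infty$, hence $\sum_n Z_n<\infty$ a.s. and extinction. This is more elementary (no stopping-time coupling needed, only Tonelli, the telescoping supermartingale identity, and integer-valuedness of $Z_n$), handles the critical boundary $\pi=1/r$ seamlessly because the dissipation term does not vanish there, and makes the role of the sign of $\delta$ transparent: it is exactly what makes $\int_1^\infty y^{-\chi-1/2}\,dy$ converge and bounds $I$ away from $0$. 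What the paper's argument offers in exchange is a structural statement --- that on the extended state space the surviving line's ages drift to $\infty$ --- which pairs naturally with the supercriticality proof (Proposition~\ref{prop:supercriticality}) where ages along the spine drift to $0$; your argument does not directly give that extra piece of information, but it is not needed for the proposition as stated.
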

Alternatively, for $\pi>1/r(\bar{\bfT}_{\kappa})$, we prove that percolated P\'olya point tree is supercritical.
\begin{Proposition}[Supercriticality of $\PPT$]\label{prop:supercriticality}
	For $\delta>0~$, the $\PPT$ survives with positive probability when percolated with probability $\pi>1/r(\bar{\bfT}_{\kappa})$.
\end{Proposition}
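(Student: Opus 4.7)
The plan is to build a non-negative martingale on the percolated P\'olya point tree from the eigenfunction identified in Theorem~\ref{thm:operator-norm:PPT}, and then to invoke the spine (size-biased tree) decomposition to show that this martingale has a strictly positive limit with positive probability, which forces survival.

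Set $r := r(\bar{\bfT}_\kappa)$ and let $f(x,s)=\bfp_s/\sqrt{x}$ be the eigenfunction from Theorem~\ref{thm:operator-norm:PPT}, so that $\bar{\bfT}_\kappa f = r f$ on the extended type space. Working with the percolated extended PPT, I would define
\[
W_n := \frac{1}{(\pi r)^n}\sum_{\omega \in \partial\mathcal{T}_n^{\pi}} f(A_\omega, S_\omega),
\]
where $\partial\mathcal{T}_n^{\pi}$ denotes the set of generation-$n$ vertices still connected to the root after percolation. The multi-type branching property together with the identity $\pi\bar{\bfT}_\kappa f = \pi r \cdot f$ make $(W_n)_{n\geq 0}$ a non-negative martingale with $\mathbb{E}[W_n]=\mathbb{E}[f(A_\emp, S_\emp)]>0$. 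Defining the size-biased law via $d\hat{\mathbb{P}}/d\mathbb{P}|_{\mathcal{F}_n} = W_n/\mathbb{E}[W_0]$ and applying the standard Lyons--Pemantle--Peres / Biggins--Kyprianou construction produces a distinguished spine $(\xi_n)_{n\geq 0}$: under $\hat{\mathbb{P}}$ the next spine vertex is picked among the children of $\xi_n$ with probability proportional to $f$, while all off-spine children initiate independent copies of the original percolated extended PPT.

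Next I would verify the $L\log L$ condition $\hat{\mathbb{E}}[\log^+(W_1/W_0)]<\infty$. The children of $\omega$ are $m_-(\omega)$ old children with ages $U_i^{1/\chi}A_\omega$ plus a Poisson point process of young children with random intensity $(1-\chi)\Gamma_\omega x^{-\chi}/A_\omega^{1-\chi}$ on $[A_\omega,\infty)$. Since $\delta>0$ gives $\chi>1/2$, the integrals $\int_0^1 u^{-1/(2\chi)}\,du$ and $\int_{A_\omega}^\infty x^{-\chi-1/2}\,dx$ are finite, and combined with the finite moments of the Gamma-distributed strengths $\Gamma_\omega$ this yields the $L\log L$ bound. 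The Kesten--Stigum-type conclusion of the spine theorem then gives $W_n\to W_\infty$ almost surely, with $\mathbb{E}[W_\infty]=\mathbb{E}[W_0]>0$ and hence $\mathbb{P}(W_\infty>0)>0$, proving survival of the extended percolated PPT on an event of positive probability.

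Finally, to transfer the conclusion to the original (restricted) PPT I would analyse the spine age chain $(A_{\xi_n})$ under $\hat{\mathbb{P}}$. Since the $f$-weight $\propto 1/\sqrt{x}$ heavily penalises large ages, old children (whose ages are bounded above by the parent's age) are strongly favoured; an explicit drift computation for $\log A_{\xi_n}$ should show that $A_{\xi_n} \to 0$ $\hat{\mathbb{P}}$-almost surely, and in particular that the event $\{A_{\xi_n}\leq 1 \text{ for all } n \geq 0\}$ has positive $\hat{\mathbb{P}}$-probability. On this event the spine lies entirely inside the restricted PPT, so the original percolated PPT survives with positive probability. The main obstacle is precisely this last step: the spine age chain is not an i.i.d.\ random walk, because the old-vs-young jump probability and the size-biased law of $\Gamma_\omega$ both depend on the spine's current type in $\{\Old,\Young\}$, so a two-type drift computation is needed, together with careful control of the excursions of the age into $[1,\infty)$.
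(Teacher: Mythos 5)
Your overall architecture mirrors the paper's --- eigenfunction of the mean-offspring operator, an additive $f$-martingale, a Lyons--Pemantle--Peres spine decomposition, a Kesten--Stigum moment condition, and then a drift analysis of the spine's age --- but the crucial difference is that you work directly with the \emph{untruncated} operator $\bar{\bfT}_\kappa$ and expect the spine's log-age to have strictly negative drift. It does not: it has \emph{exactly zero} drift, and this kills the final step. Concretely, the spine's label chain has transition matrix $p_{s,t}=\bfM_{s,t}\bfp_t/(\lambda_{\bfM}\bfp_s)$, and because $c_{\old\old}=c_{\young\young}$ the eigenvector $\bfp$ satisfies $\bfp_\young/\bfp_\old=\sqrt{c_{\young\old}/c_{\old\young}}$, which makes the chain doubly stochastic ($p_{\old\old}=p_{\young\young}$, $p_{\old\young}=p_{\young\old}$) with uniform stationary law $(1/2,1/2)$. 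Meanwhile, the size-biased age ratio has $\E[\log R(\Old)]=-\frac{1}{\chi-1/2}$ while the untruncated $\Young$ jump has tail $\Qbb(R(\Young)>a)=a^{1/2-\chi}$ for $a\geq 1$, giving $\E[\log R(\Young)]=+\frac{1}{\chi-1/2}$. The stationary drift is therefore $\tfrac12\bigl(-\tfrac{1}{\chi-1/2}\bigr)+\tfrac12\bigl(\tfrac{1}{\chi-1/2}\bigr)=0$. With zero drift and finite increment variance, $\log A_{\xi_n}$ is recurrent and oscillates, so $A_{\xi_n}\not\to 0$, $\limsup_n A_{\xi_n}=\infty$ $\hat{\mathbb{P}}$-a.s., and the event $\{A_{\xi_n}\leq 1\text{ for all }n\}$ has probability zero. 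Your ``explicit drift computation'' cannot produce the negative drift you need because it isn't there.

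The paper avoids this by first truncating children to ages in $(0,bx]$, forming the $b$-$\PPT$ with kernel $\kappa_b$. This truncation damps the $\Young$ column of $\bfM$ by a factor $1-b^{1/2-\chi}<1$, which breaks the symmetry of the label chain: one computes $p_{\old\old}+p_{\young\old}>1$, hence $\upsilon^b_\old>\upsilon^b_\young$ strictly, and together with the truncated jump law the resulting drift is strictly negative for every finite $b$. Because $r(\bar{\bfT}_{\kappa_b})\uparrow r(\bar{\bfT}_\kappa)$ as $b\to\infty$, one can still choose $b$ large enough that $\pi r(\bar{\bfT}_{\kappa_b})>1$, so the truncated tree remains supercritical, the Kesten--Stigum moment bound holds uniformly (the finite range of young ages is also what makes the $(1+\eta)$-moment computation clean), and the spine ages converge to $0$. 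Survival of the $b$-$\PPT$ then trivially transfers to survival of the restricted $\PPT$. If you want to salvage your blind write-up, this intermediate truncation is the ingredient you are missing.
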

We prove this proposition adapting the proof of \cite[Lemma~3.3]{DM13},
utilizing a spine decomposition argument akin to that used by Lyons, Pemantle and Peres in \cite{LPP95}.
Now we have all the tools to prove Theorem~\ref{thm:main:theorem}:
\begin{proof}[Proof of Theorem~\ref{thm:main:theorem}]
	The proof of Theorem~\ref{thm:main:theorem} is quite straightforward from \cite[Theorem~1.1]{ABS22} and the tools we have developed so far. According to \cite[Theorem~1.1]{ABS22}, the critical percolation threshold of the sequence of random graphs is equal to that of its local limit and the giant component is unique, if the following properties are satisfied:
	\begin{enumerate}
		\item \label{cond:1:expander} For any $\vep>0$, there exists $\alpha>0$ and $m\in\N$ such that the sequence of (possibly random) graphs is an $(\alpha,\vep,m)$-large-set expander;
		\item \label{cond:2:continuity} The survival probability of the local limit of the graph sequence is continuous at its critical percolation threshold.
	\end{enumerate}
	Proposition~\ref{prop:Cheeger-value:model-b} show that models (a), (b) and (d) are large-set expanders, thus proving that Condition~\ref{cond:1:expander} holds. Proposition~\ref{prop:criticality:negative-delta} shows that $\pi_c=0$ for the $\PPT$ with $\delta\in(-m,0]$. For $\delta>0$, Proposition~\ref{lem:subcriticality} and \ref{prop:supercriticality} show that the inverse of the spectral norm of the mean offspring operator is the critical percolation threshold of the P\'olya point tree, and its survival probability is continuous at the critical percolation threshold. Thus condition~\ref{cond:2:continuity} holds. Therefore, the critical percolation thresholds for preferential attachment models (a), (b) and (d) are equal to that of P\'olya point tree. Lastly, Theorem~\ref{thm:operator-norm:PPT} identifies the explicit expression of the spectral norm of the mean offspring operator of P\'olya point tree, thus completing the proof.
\end{proof}
\subsection{Discussions and open problems}
We believe that our proof can be extended to other preferential attachment models. For example, a similar technique may prove useful in identifying critical percolation thresholds for preferential attachment models with i.i.d.\, random out-edges as studied in \cite{DEH09,GV17,RRR22}. Our proof of the large-set expander property for preferential attachment model extensively uses the fact that $m\geq 2$. However we stress that all the lemmas and propositions for the P\'olya point tree with $\delta>0$ hold true for $m=1$ as well. 

Although we identify the exact critical percolation thresholds for several preferential attachment models, our techniques do not reveal the order of the phase transition. Alimohammadi, Borgs and Saberi have proved that this phase transition is {infinite order} for $\delta=0$, i.e., $\zeta(\pi)$ is infinitely differentiable at $\pi_c$. Eckhoff, M\"orters and Ortgiese proved in \cite{EMO21} that the phase transition in Bernoulli preferential attachment models defined in \cite{DM13} is of infinite order also. Therefore, we believe that the phase transition is of infinite order for other preferential attachment models and any admissible $\delta$.
\subsection{Organisation of the article}
In Section~\ref{sec:large_set_expander}, we establish that the preferential attachment models are large-set expanders with bounded average degree by proving Proposition~\ref{prop:Cheeger-value:model-b}.
Since we only require an eigenvalue of $\bar{\bfT}_\kappa$ and the eigenfunction corresponding to that particular eigenvalue, we defer the proof of Theorem~\ref{thm:operator-norm:PPT} to Section~\ref{sec:spectral_radius} but use the fact that $f(x,s)$ in Theorem~\ref{thm:operator-norm:PPT} is an eigenfunction of $\bar{\bfT}_\kappa$ corresponding to the eigenvalue $r(\bar{\bfT}_\kappa)$.
In Section~\ref{sec:percolation:threshold}, we identify the inverse of the spectral norm as the critical percolation threshold of the P\'olya point tree. Furthermore, we prove that the survival function of the P\'olya point tree is continuous at the critical percolation threshold. Lastly,
in Section~\ref{sec:spectral_radius}, we explicitly compute the operator and spectral norm of the mean offspring operator of the P\'olya point tree for positive $\delta$ and prove Theorem~\ref{thm:operator-norm:PPT}.
\section{Large-set expander property}\label{sec:large_set_expander}
In this section, we prove Proposition~\ref{prop:Cheeger-value:model-b}.
For that, we show that for all $\varepsilon\in(0,1/2)$, there exists $\alpha>0$ such that sequence of preferential attachment graphs are $(\alpha,\vep,m)$-large-set expanders. For this, we adapt the Cheeger value calculation done for model (b) and $\delta=0$ from \cite{MPS06}, to {models (a), (b) and (d)} and all admissible $\delta$. 
Let us denote $\{1,2,\ldots,n\}$ by $[n]$ for any $n\in\N$.

According to \cite[Chapter~5]{vdH2}, the graph $G_{n}$ can be constructed by \emph{collapsing} of a preferential attachment tree $T_{m(n-2)+a_{[2]}}$, consisting of $m(n-2)+a_{[2]}$ vertices. 
Let us  denote 
\eqn{\label{eq:def:total-degree}
m_{[n]}=m(n-2)+a_{[2]}~.}
These vertices of $T_{m_{[n]}}$ are referred to as \emph{mini-vertices}. A mini-vertex $u$ is associated with a vertex $v\in[n]$ if $u$ belongs to the set of mini-vertices that form the vertex $v$ after collapsing.


In summary, the construction of $G_{n}$ involves collapsing a preferential attachment tree, leading to mini-vertices being collapsed to vertices. The initial graph is then obtained by arranging the mini-vertices with self-loops before those creating connections between specific vertices.

In the tree $T_{m_{[n]}}$, mini-vertex $u$ connects to mini-vertex $n(u)$. We shall use this pre-collapsed version of the graph $G_{n}$ for proving Proposition~\ref{prop:Cheeger-value:model-b}. Similarly as in \cite{MPS06}, we call a set $S\subset[n]$ is \rm{BAD} if 
\eqn{\label{def:bad-set}
	|\cut_{\sss G_{n}}(S,S^c)|<\alpha |S|~.}
For convenience, we also recall other notations used in \cite{MPS06}:
\begin{Definition}[\rm{FIT} and \rm{ILL} mini-vertices]
	\label{def:fit-ill-minivertex}
	Fix $S\subset[n]$. Then we call a mini-vertex $t\in [m_{[n]}]$ \rm{FIT} when
	\begin{itemize}
		\item[${\rhd}$] $t$ is associated to a vertex in $S$ and $n(t)$ is associated to a vertex in $S^c$;
		\item[${\rhd}$] $t$ is associated to a vertex in $S^c$ and $n(t)$ is associated to a vertex in $S$.
	\end{itemize}
	A mini-vertex is \rm{ILL} if it is not \rm{FIT}. 
\end{Definition}
Note that the \rm{FIT} mini-vertices create the connections between $S$ and $S^c$, whereas \rm{ILL} mini-vertices are responsible for creating edges between two vertices of the same set, either $S$ or $S^c$. Therefore, bounding the number of \rm{FIT} mini-vertices will provide us a bound on $|\cut_{\sss G_{n}}(S,S^c)|$. We prove the following intermediate lemma that upper bounds the probability of observing too few \rm{FIT} mini-vertices and which will be helpful for proving the Proposition~\ref{prop:Cheeger-value:model-b}:
\begin{Lemma}[Upper bound on probability for mini-vertices to be \rm{ILL} in models (a), (b) and (d)]
	\label{lem:intermediate:Cheeger-value}
	For preferential attachment models (a), (b) and (d) and a fixed subset $S\subset[n]$ of cardinality $k$ and for a fixed subset $A\subset [m_{[n]}]$ such that $|A|\leq \alpha k$, the probability that all mini-vertices in $[m_{[n]}]\setminus A$ are {\rm{ILL}} and all mini-vertices in $A$ are {\rm{FIT}} with respect to $S$, is at most $(mn+\delta_0)^{\delta_0}\binom{mk}{\alpha k}/\binom{m_{[n]}-\alpha k}{mk-\alpha k}$ for some $\delta_0$ fixed.
\end{Lemma}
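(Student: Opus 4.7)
The plan is to adapt the mini-vertex counting argument of \cite{MPS06}, which handled model (b) with $\delta=0$, to all admissible $\delta$ and to the three models (a), (b), and (d). We rely on the collapsing representation: $G_n$ is obtained from a tree $T_{m_{[n]}}$ on $m_{[n]}$ mini-vertices, and the event that a given mini-vertex $t$ is FIT or ILL with respect to $S$ is determined by the side of the partition $(S,S^c)$ on which its tree-neighbour $n(t)$ sits. The event of interest therefore factorises along the chronological exploration of $T_{m_{[n]}}$.

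First, for each mini-vertex $t$ I would write down the conditional probability $p_t$ that $t$ has the prescribed status (FIT if $t\in A$, ILL otherwise) given the mini-vertex tree constructed through time $t-1$. Using the attachment rules \eqref{def:model:a}, \eqref{def:model:b} and \eqref{def:model:d} together with the normalising constants $c^{(s)}_{v,j}$, this conditional probability is a ratio whose numerator is a sum of weights of the form ``$d_u + \delta$-offset'' over mini-vertices on the prescribed side of the partition, and whose denominator is the normaliser $c^{(s)}_{v(t),j(t)}$.

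Next, I would telescope the product $\prod_t p_t$ along the time-order of mini-vertex attachments. In the $\delta=0$ case of \cite{MPS06}, this product cleanly yields the ratio $\binom{mk}{\alpha k}/\binom{m_{[n]}-\alpha k}{mk-\alpha k}$ via a standard P\'olya urn identity that collapses rising factorials; the combinatorial factor $\binom{mk}{\alpha k}$ counts the choices of which $\alpha k$ out of the roughly $mk$ mini-vertices on the $S$-side are labelled FIT by the set $A$. For $\delta\neq 0$, the numerators and denominators are shifted by $\delta$-dependent corrections, so the product is no longer an exact rising factorial; however, each correction contributes a multiplicative error of order $1+O(|\delta|/t)$, and accumulating over $t\leq m_{[n]}\leq mn+a_{[2]}$ gives at most a polynomial factor $(mn+\delta_0)^{\delta_0}$ for a constant $\delta_0$ depending only on $\delta$, $m$, and $a_{[2]}$.

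Finally, I would handle the differences between models (a), (b), and (d): these differ only in whether self-loops are permitted on the first out-edge and in the precise form of the $\delta$-term in $c^{(s)}_{v,j}$ and in the self-loop attachment probability. Each modification is a bounded perturbation at each attachment step and only enlarges the constant $\delta_0$. The main obstacle I anticipate is the uniform bookkeeping of these $\delta$-corrections and, especially for $\delta<0$, ensuring that the numerator weights remain non-negative throughout; this is where the hypothesis $m\geq 2$ and the assumption that the initial graph has a vertex of degree at most $m$ become crucial, to keep the normalising constants $c^{(s)}_{v,j}$ bounded away from zero and the effective weights strictly positive.
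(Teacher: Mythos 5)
Your outline is essentially the paper's approach: decompose along the chronological construction of the collapsed mini-vertex tree $T_{m_{[n]}}$, bound each conditional FIT/ILL probability, telescope the product, and control the $\delta$-dependent corrections by a polynomial factor. Two details are worth flagging, because your account of them is not quite how the argument actually closes.

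First, the mechanism for the $(mn+\delta_0)^{\delta_0}$ factor. You describe per-step multiplicative corrections of size $1+O(|\delta|/t)$ accumulating over $t\leq m_{[n]}$. The paper does something cleaner and more robust: it bounds the conditional probability that a mini-vertex $x_i$ is ILL by a ratio of the fixed form $\frac{i+|A|+\delta_0}{z_i+1+|A|+\delta_0}$, where $\delta_0=\max\{0,\lceil\delta(2-a_{[2]}/m)/(2+\delta/m)\rceil,\lceil\delta(1-a_1/m)/(2+\delta/m)\rceil\}$ is a single constant absorbing all three models and both initial-vertex cases. The telescoping then goes through because the values $\{z_i+1\}\cup\{\bar z_i+1\}$ exhaust $[m_{[n]}-|A|]$ (equation \eqref{for:lem:intermediate:Cheeger-value:3} in the paper), so all denominators combine into $\prod_{i=1}^{m_{[n]}-|A|}(i+|A|+\delta_0)$. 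The polynomial factor $(mn+\delta_0)^{\delta_0}$ emerges at the very end from comparing $\delta_0$-shifted factorials to unshifted ones (each of the $\delta_0$ extra terms in $(mk+\delta_0)!/(mk)!$ is at most $mn+\delta_0$), not from accumulating per-step relative errors. Your "$1+O(\delta/t)$" heuristic is plausible as a sanity check, but the relative error at step $i$ actually involves both $i$ and $z_i$, is not uniformly $O(\delta/t)$, and you would still need the exhaustion identity to make the telescoping work; you should make that identity explicit.

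Second, you attribute the hypothesis $m\geq 2$ to keeping the weights positive in this lemma. That is not where it enters: the lemma's bound holds for any $m\geq 1$, and the positivity of the attachment weights is guaranteed by $\delta>-m$ alone (every mini-vertex carries weight at least $1+\delta/m>0$). The constraint $m\geq 2$ is used only later, in the proof of Proposition~\ref{prop:Cheeger-value:model-b}, where exponents of the form $m-1-2\alpha$ must be positive. The assumption $a_2\leq m$ on the initial graph is used here, but for a different reason: it lets one replace $\delta(1-a_1/m)$ by the larger (or comparable) $\delta(2-a_{[2]}/m)$ when $1$ and $2$ sit on different sides of the partition, so that a single constant $\delta_0$ suffices across both cases.

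Neither of these is a fatal gap, but as written your proof would not actually produce the stated bound without filling in the exhaustion identity and deriving the $\delta_0$-shifted ratio bound explicitly for each of models (a), (b), (d).
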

 Since the proof of Lemma~\ref{lem:intermediate:Cheeger-value} is a minor extension of the proof of \cite[Lemma~2]{MPS06}, applying to $\delta=0$ and model (b), to models (a) (b) and (d) with general $\delta>-m$, we defer its proof to the appendix and prove Proposition~\ref{prop:Cheeger-value:model-b} subject to Lemma~\ref{lem:intermediate:Cheeger-value}. This proof also follows similar line of proof as in \cite[Proof of Theorem 1]{MPS06}. This step requires us to choose $m\geq 2$.
\begin{proof}[Proof of Proposition~\ref{prop:Cheeger-value:model-b}]
	The proof of Proposition~\ref{prop:Cheeger-value:model-b}	 follows the proof of \cite[Theorem~1]{MPS06} almost verbatim. Let $S\subset [n]$ be a set of vertices of size $k$ and $A\subset [m_{[n]}]$ be the set \rm{FIT} mini-vertices such that $|A|\leq \alpha k$ for some $\alpha\in(0,1)$. By Lemma~\ref{lem:intermediate:Cheeger-value},
	\eqan{\label{for:prop:Cheegar-value:model-b:1}
	\prob\Big( \bigcap\limits_{t\in[m_{[n]}]\setminus A}\{t ~\text{is \rm{ILL}}\} \Big)\leq& (mn+\delta_0)^{\delta_0} \binom{mk}{|A|}\binom{m_{[n]}-|A|}{mk-|A|}^{-1}\nn\\
	\leq& (mn+\delta_0)^{\delta_0} \binom{mk}{\alpha k}\binom{m_{[n]}-\alpha k}{mk-\alpha k}^{-1}~,
	}
	where the last inequality follows from the fact that $|A|\leq \alpha k$. Now we have $\binom{n}{k}$ many choices for $S$, whereas for fixed $S$ and $|A|$ being at most $\alpha k$, we have $\alpha k\binom{m_{[n]}}{\alpha k}$ many choices for $A$. Therefore, \eqref{for:prop:Cheegar-value:model-b:1} implies that
	\eqan{\label{for:prop:Cheegar-value:model-b:2}
	\prob\Big( \alpha(G_{n},\vep)<\alpha \Big)\leq& (mn+\delta_0)^{\delta_0} \sum\limits_{k=\vep n}^{n/2} \binom{n}{k}\alpha k\binom{m_{[n]}}{\alpha k}\frac{\binom{mk}{\alpha k}}{\binom{m_{[n]}-\alpha k}{mk-\alpha k}}~.
	}
	In the next few step, we bound the summands in the RHS of \eqref{for:prop:Cheegar-value:model-b:2}.
	Using that $\binom{n}{k}\binom{m_{[n]}-n-\alpha k}{mk-k-\alpha k}\leq \binom{m_{[n]}-\alpha k}{mk-\alpha k}$, we upper bound the summand in the RHS of \eqref{for:prop:Cheegar-value:model-b:2} by
	\eqn{\label{for:prop:Cheegar-value:model-b:03}
	\alpha k\binom{m_{[n]}}{\alpha k}\frac{\binom{mk}{\alpha k}}{\binom{m_{[n]}-n-\alpha k}{mk-k-\alpha k}}~.}
	Since binomial coefficients can be upper and lower bounded by
	\eqn{\label{for:prop:Cheegar-value:model-b:04}
	\Big(\frac{r}{s}\Big)^{s}\leq \binom{r}{s}\leq \Big(\frac{\e r}{s}\Big)^{s}~,}
	we can upper bound the expression in \eqref{for:prop:Cheegar-value:model-b:03} by
	\eqan{\label{for:prop:Cheegar-value:model-b:05}
	&~\alpha k\Big( \frac{\e^2 m_{[n]}m}{\alpha^2 k} \Big)^{\alpha k}\Big( \frac{(m-1)k-\alpha k}{m_{[n]}-n-\alpha k} \Big)^{(m-1-\alpha)k}\nn\\
	\leq&~ \alpha k\Big( \frac{\e^2 m_{[n]}m}{\alpha^2 k} \Big)^{\alpha k}\Big( \frac{(m-1)k}{m_{[n]}-n} \Big)^{(m-1-\alpha)k}~.
	}
	Using the explicit expression for $m_{[n]}$ from \eqref{eq:def:total-degree}, we simplify the RHS of \eqref{for:prop:Cheegar-value:model-b:05} as
	\eqn{\label{for:prop:Cheegar-value:model-b:06}
	\alpha k \Big( \frac{\e m}{\alpha}\Big)^{2\alpha k}\Big( \frac{n+(a_{[2]}-2m)/m}{k} \Big)^{\alpha k}\Big( \frac{k}{n+(a_{[2]}-2m)/(m-1)} \Big)^{(m-1-\alpha)k}~.
	}
	Denoting $\delta_1=\min\{(a_{[2]}-2m)/m,(a_{[2]}-2m)/(m-1)\}$, we upper bound the expression in \eqref{for:prop:Cheegar-value:model-b:06} by
	\eqn{\label{for:prop:Cheegar-value:model-b:07}
	\alpha k \Big( \frac{\e m}{\alpha}\Big)^{2\alpha k}\Big( \frac{k}{n+\delta_1} \Big)^{(m-1-2\alpha)k}~.
	}
	Hence,
	\eqan{\label{for:prop:Cheegar-value:model-b:08}
		\prob\Big( \alpha(G_{n},\vep)<\alpha \Big)
		\leq &(mn+\delta_0)^{\delta_0} \sum\limits_{k=\vep n}^{n/2}\alpha k \Big( \frac{\e m}{\alpha}\Big)^{2\alpha k}\Big( \frac{k}{n+\delta_1} \Big)^{(m-1-2\alpha)k}~.
	}
	The inequalities in \eqref{for:prop:Cheegar-value:model-b:03}-\eqref{for:prop:Cheegar-value:model-b:07} follow exactly from the calculations in \cite[Proof of Theorem~1]{MPS06}. There are $O(n)$ many summands in the sum. Thus, to prove the proposition, it suffices to bound the leading term by $o(\e^{-nc})$ for some $c>0$. Thus, now we study the function
	\eqn{\label{for:prop:Cheegar-value:model-b:3}
	g(x)=\alpha x \Big( \frac{\e m}{\alpha} \Big)^{2\alpha x} \Big( \frac{x}{n+\delta_1} \Big)^{(m-1-2\alpha)x}~,}
	for $x\in[\vep n,n/2]$. We argue that there exists $x_0\in[1,n/2]$ such that $g(x)$ is monotonically decreasing for $x\in[1,x_0]$ and monotonically increasing for $x\in[x_0,n/2]$. For this we first calculate the derivative of $g(x)$ as
	\eqn{\label{for:prop:Cheegar-value:model-b:4}
	g^\prime(x)=\frac{g(x)}{x}\Big[ 1+x\Big( 2\alpha \Big(1+\log\frac{m}{\alpha}\Big)+(m-1-2\alpha)\Big(\log\frac{x}{n+\delta_1}+1\Big) \Big) \Big]~.
	}
	Therefore, the sign of $g^\prime(x)$ is determined by the sign of 
	\eqn{\label{for:prop:Cheegar-value:model-b:5}
	g_1(x)=\frac{xg^\prime(x)}{g(x)}=1+x\Big( 2\alpha \Big(1+\log\frac{m}{\alpha}\Big)+(m-1-2\alpha)\Big(\log\frac{x}{n+\delta_1}+1\Big) \Big)~.}
	Note that $g_1(1)<0$ and $g_1(n/2)>0$ for $n$ sufficiently large, while the first and second derivative of $g_1$ are
	\eqan{\label{for:prop:Cheegar-value:model-b:6}
	g_1^\prime(x)=&2\alpha \Big(1+\log\frac{m}{\alpha}\Big)+(m-1-2\alpha)\Big(\log\frac{x}{n+\delta_1}+1\Big)~,\nn\\
	g_1^{\prime\prime}(x)=&\frac{m-1-2\alpha}{x}>0~.}
	Therefore, $g_1$ is a convex function having a unique root in $[1,n/2]$. Consequently for some $x_0\in[1,n/2],~g(x)$ is decreasing in the interval $[1,x_0]$ and increasing in the interval $[x_0,n/2]$. We conclude that it is enough to show that $\max\{g(\vep n),g(n/2)\}=o(\e^{-nc}).$ Note that
	\eqan{\label{for:prop:Cheegar-value:model-b:7}
	g(\vep n)=&\alpha (\vep n)\Big[\Big( \frac{\e m}{\alpha} \Big)^{2\alpha} \vep^{(m-1-2\alpha)}\Big]^{\vep n}~,\nn\\
	\text{and}\qquad g(n/2)=&\alpha (n/2) \Big[\Big( \frac{\e m}{\alpha} \Big)^{2\alpha} \Big( \frac{1}{2} \Big)^{(m-1-2\alpha)}\Big]^{n/2}~.
	}
	For any $r\in(0,1),~g(r(n+\delta_1))$ drops exponentially first when 
	\eqn{\label{for:prop:Cheegar-value:model-b:8} 
	\Big( \frac{\e m}{\alpha} \Big)^{2\alpha}<r^{2\alpha-(m-1)}~.}
	Taking logarithms on both sides of \eqref{for:prop:Cheegar-value:model-b:8}, the condition simplifies to
	\eqn{\label{for:prop:Cheegar-value:model-b:9}
	\alpha< \frac{(m-1)\log \frac{1}{r}+2\alpha\log \alpha}{\log m-\log r +1}~.
	} 
	Since $2\alpha\log\alpha\to 0$ as $\alpha\to 0$ and for any $\gamma\leq 1/2$, we can choose $\alpha_\gamma>0$ such that 
	\[
		\alpha_\gamma\log \alpha_\gamma =\frac{m-1}{4}\log \gamma<0~. 
	\]
	By \eqref{for:prop:Cheegar-value:model-b:7}-\eqref{for:prop:Cheegar-value:model-b:9}, both $g(\vep n)$ and $g(n/2)$ are $o(\e^{-cn})$, for some $c>0$, if we choose 
	\[
		\alpha<\min\Big\{ \alpha_\vep,\alpha_{1/2},\frac{(m-1)\log \frac{1}{\vep}+2\alpha_\vep\log \alpha_\vep}{\log m-\log \vep +1},\frac{(m-1)\log 2+2\alpha_{1/2}\log \alpha_{1/2}}{\log m-\log \frac{1}{2} +1} \Big\}~.
	\]
	Therefore, the RHS of \eqref{for:prop:Cheegar-value:model-b:2} converges to $0$ as $n\to\infty$ for a suitable choice of positive $\alpha$ (depends only on $\vep$ and $m$). Hence $(G_{n})_{n\geq 2}$ is whp an $(\alpha,\vep,m)$-large-set expander. 
\end{proof}
The preferential attachment models (a), (b) and (d) have average degree $m$. Proposition~\ref{prop:Cheeger-value:model-b} proves that for any $\vep>0$, there exists an $\alpha>0$, such that the sequence of preferential attachment models ((a), (b) or (d) and any $\delta>-m$) is $(\alpha,\varepsilon,m)$-large-set expander.
\section{The local limit and percolation threshold}\label{sec:percolation:threshold}
 In \cite[Appendix~E]{ABS22}, the authors have showed that the survival probability is continuous at $\pi_c$ for the $\delta=0$ case of the P\'olya point tree. Additionally, the authors have proved in  \cite[Corollary~2.2]{ABS22} that $\zeta(\pi)$ is continuous for all $\pi\neq \pi_c$.
We now proceed to identify the critical percolation threshold $\pi_c$ for the P\'olya point tree, a multi-type branching process with a mixed continuous and discrete type space. Additionally, we demonstrate that $\zeta(\pi)$ is continuous at $\pi_c$ for any $\delta>-m$. Intuitively, $\pi_c$ for such branching processes is the inverse of the spectral norm of the offspring operator but this requires a proof.

Let $r(\bar{\bfT}_{\kappa})$ denote the spectral norm of the offspring operator $\bar{\bfT}_{\kappa}$. The following theorem presents the critical percolation threshold of the P\'olya point tree:
\begin{Theorem}[Critical percolation threshold for the P\'olya point tree]\label{thm:critical-percolation:PPT}
	Fix $m\geq 1$ and $\delta>-m$. Let $\bar{\bfT}_\kappa$ be the offspring operator of the P\'olya point tree on $\Scal$. Then, for $\delta>0$, the critical percolation threshold of the P\'olya point tree is $1/r(\bar{\bfT}_\kappa)$. For $m\geq2$ and $\delta\in(-m,0]$, the critical percolation threshold is $0$. Additionally, $\pi\mapsto\zeta(\pi)$, the survival probability of the P\'olya point tree after percolation, is continuous at the critical percolation threshold.
\end{Theorem}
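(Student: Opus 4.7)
The plan is to assemble the three $\PPT$-specific propositions stated earlier in the excerpt---Propositions~\ref{prop:criticality:negative-delta}, \ref{lem:subcriticality}, and \ref{prop:supercriticality}---together with the general monotonicity and right-continuity of the Bernoulli percolation survival function. The theorem has two distinct tasks: first, to identify the value of $\pi_c$ in each of the two regimes $\delta>0$ and $\delta\in(-m,0]$; second, to verify continuity of $\zeta$ at $\pi_c$.

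For the identification of $\pi_c$ in the regime $\delta>0$, I would combine Propositions~\ref{lem:subcriticality} and \ref{prop:supercriticality}. The former asserts that the percolated $\PPT$ dies out almost surely for every $\pi\leq 1/r(\bar{\bfT}_\kappa)$, so $\zeta(\pi)=0$ throughout $[0,1/r(\bar{\bfT}_\kappa)]$. The latter asserts that it survives with positive probability for every $\pi>1/r(\bar{\bfT}_\kappa)$, so $\zeta(\pi)>0$ there. Together these sandwiched inequalities identify $\pi_c=1/r(\bar{\bfT}_\kappa)$ exactly, and by Theorem~\ref{thm:operator-norm:PPT} this equals the explicit value appearing in \eqref{:def:pi-c}. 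For $\delta\in(-m,0]$ the regime is degenerate: Proposition~\ref{prop:criticality:negative-delta} directly delivers $\pi_c=0$.

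For continuity at $\pi_c$, I would invoke two standard facts about Bernoulli percolation on a fixed (random) locally finite graph, both obtainable from the monotone coupling $\{U_e\}_{e}$ of i.i.d.\ uniform edge weights: (i) $\pi\mapsto\zeta(\pi)$ is non-decreasing; (ii) $\pi\mapsto\zeta(\pi)$ is right-continuous on $[0,1]$ (this is the content of \cite[Lemma~8.9]{Grimmet99}, since $\{|\mathscr{C}(\pi)|=\infty\}$ is a decreasing intersection over $\pi'\downarrow \pi$ under this coupling). Given (i)--(ii), continuity at $\pi_c$ reduces to showing that $\zeta(\pi_c)=0$ and that $\zeta$ has zero left-limit at $\pi_c$. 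By the very definition of $\pi_c$ and monotonicity, $\zeta(\pi)=0$ for every $\pi<\pi_c$, so the left-limit is automatic. The boundary value is handled case by case: for $\delta>0$, Proposition~\ref{lem:subcriticality} explicitly includes the boundary $\pi=\pi_c=1/r(\bar{\bfT}_\kappa)$ in its subcritical range, so $\zeta(\pi_c)=0$; for $\delta\in(-m,0]$ one has $\pi_c=0$ and $\zeta(0)=0$ trivially, since no edges are retained at $\pi=0$. Right-continuity of $\zeta$ then gives $\lim_{\pi\downarrow\pi_c}\zeta(\pi)=\zeta(\pi_c)=0$, completing the continuity claim.

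The main obstacle here is essentially bookkeeping: all the heavy lifting---the spine-decomposition super-criticality argument and the sub-martingale sub-criticality argument in the $\delta>0$ regime, and the elbow-children stochastic-domination argument in the $\delta\leq 0$ regime---has already been packaged into the three earlier propositions. The one genuinely delicate point I would double-check is that right-continuity at $\pi_c=0$ in the $\delta\in(-m,0]$ regime really does follow from the standard monotone coupling despite the mean offspring operator being unbounded there; this should go through because the coupling argument only requires the $\PPT$ to be locally finite almost surely, which it is, and not any spectral boundedness of $\bar{\bfT}_\kappa$.
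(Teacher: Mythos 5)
Your proposal is correct and matches the paper's own proof almost verbatim: for $\delta>0$ the paper combines Proposition~\ref{lem:subcriticality} (which includes $\pi=\pi_c$ in its subcritical range) with Proposition~\ref{prop:supercriticality}, obtaining left-continuity of $\zeta$ at $\pi_c$ for free and citing \cite[Lemma~8.9]{Grimmet99} for right-continuity, while for $\delta\in(-m,0]$ it invokes the elbow-children domination argument underlying Proposition~\ref{prop:criticality:negative-delta} to get $\pi_c=0$. Your final remark is a useful one that the paper leaves implicit: in the $\delta\le 0$ case the continuity of $\zeta$ at $\pi_c=0$ still rests on the same right-continuity argument, which only needs local finiteness of the $\PPT$ and not boundedness of $\bar{\bfT}_\kappa$.
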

Theorem~\ref{thm:critical-percolation:PPT} identifies the critical percolation threshold for the P\'olya point tree and establishes the continuity of the survival probability at $\pi_c$ for $\delta>-m$.
Consequently, we obtain the critical percolation threshold for the preferential attachment graphs, which is the same as that of the P\'olya point tree, its local limit.

We start by proving the theorem for a non-positive $\delta$. This part turns out to be easier to prove than the other regime of $\delta$. Here we provide a proof for $\delta=0$ also, distinct from the functional analytic approach used in \cite{ABS22}.

\subsection{Non-positive $\delta$ regime}\label{subsec:non-negative-delta}
In \cite{ABS22}, authors have treated the $\delta=0$ case using fixed-point approximation techniques. We present an alternative approach where we establish that for any $\delta\leq0$ and $\pi>0$, the P\'olya point tree stochastically dominates a single-type discrete branching process with a mean offspring larger than $1$. Before defining this branching process, we define the ``elbow children" as follows:
\begin{Definition}[Elbow children]
	Let $(x,s)$ be a node in $\PPT.~(z,{\Old})$ is called an \emph{elbow child} of $(x,s)$ if there exists $(y,{\Young})$ in $\PPT$ such that $(x,s)$ connects to $(y,{\Young})$ and $(y,{\Young})$ connects to $(z,{\Old})$ in $\PPT$. The edge-connections involved in forming an elbow child are called \emph{elbow edges}.
\end{Definition}
\begin{wrapfigure}{r}{3cm}
\begin{tikzpicture}[line cap=round,line join=round,>=triangle 45,x=1.0cm,y=1.0cm]
	\clip(-7.5,2.2) rectangle (-5,4.7);
	\draw [line width=2.pt,color=wrwrwr] (-7.,3.)-- (-6.5,4.);
	\draw [line width=2.pt,color=wrwrwr] (-6.5,4.)-- (-6.,2.5);
	\draw (-6,2.8) node[anchor=north west] {$(z,{\Old})$};
	\draw (-7,4.7) node[anchor=north west] {$(y,{\Young})$};
	\draw (-7.5,2.9) node[anchor=north west] {$(x,s)$};
	\begin{scriptsize}
		\draw [fill=rvwvcq] (-7.,3.) circle (2.5pt);
		\draw [fill=rvwvcq] (-6.5,4.) circle (2.5pt);
		\draw [fill=rvwvcq] (-6.,2.5) circle (2.5pt);
	\end{scriptsize}
\end{tikzpicture}

	\label{fig:elbow-child}
\end{wrapfigure}
In the figure, the $(z,{\Old})$ node is connected to $(x,s)$ through $(y,{\Young})$. Thus, $(z,\Old)$ is an elbow child of $(x,s)$. The edge-connections $((x,s),(y,\Young))$ and $((y,\Young),(z,\Old))$ are elbow edges. Note that all the elbow children of any node have label $\Old$ in the tree. Further, the set of elbow children of nodes at $n$-th generation lower bounds the total number of $\Old$ labeled nodes in the $(n+2)$-th generation in a $\PPT$.

Fix any $\pi>0$. We now work with the $\PPT$ percolated with probability $\pi$. To construct the branching process, we proceed in two steps. Firstly, we construct a branching process with elbow children, where the offspring generation still depends on the age of the parent node. Next, we further stochastically lower bound it with a branching process, where the offspring generation process does not depend on the age of the parent node.

Let $h>0$ be a fixed threshold. Starting from the root in the percolated $\PPT$, there is a positive probability of obtaining a child $(x,\Old)$ of the root such that $x<h$. We consider the branching process $\Tcal(x,\Old)$, starting at $(x,\Old)$. The children of $(x,\Old)$ in $\Tcal(x,\Old)$ are given by the elbow-children of $(x,\Old)$ in the percolated $\PPT$ we started with. We continue growing the tree following the same procedure. It is evident that the size of $\Tcal(x,\Old)$ serves as a lower bound for the size of the percolated $\PPT$. We then prune the tree such that all nodes have an age at most $h$. In this pruned tree, all nodes have label $\Old$, making the type labels redundant. We denote this pruned tree as $\Tcal^{\sss (h)}(x)$ and the nodes no longer have labels. From its construction, it is evident that the size of $\Tcal^{\sss (h)}(x)$ further lower bounds that of $\Tcal(x,\Old)$ and in turn, lower bounds the size of the percolated $\PPT$.

Before proceeding to the next step of our stochastic lower bounds, we compute the explicit kernel function $\kappa_1$ of the offspring operator of $\Tcal^{\sss (h)}(x)$. Since $(z,\Old)$ is an elbow child of $(y,\Old)$, the latter connects to the former through a $\Young$ labeled node.
Therefore, $\kappa_1$ is given by
\eqn{\label{def:kernel:lower-bound:1}
\kappa_1(y,z)=\pi^2\int\limits_{y}^1\kappa((y,\Old),(u,\Young))\kappa((u,\Young),(z,\Old))\,du~.}
Here we crucially use the fact that the node $(u,\Young)$ connecting $(y,\Old)$ and $(z,\Old)$ in the $\PPT$ is unique if it exists. The $\pi^2$ factor is due to bond percolation on the two elbow-edges used to connect $(y,\Old)$ and $(z,\Old)$ in the $\PPT$. The RHS of \eqref{def:kernel:lower-bound:1} can be simplified as
\eqn{\label{eq:kernel:lower-bound:1}
\kappa_1(y,z)=\begin{cases}
	c_{\old\young}c_{\young\old}\pi^2 (yz)^{-1+\chi}(1-y^{1-2\chi})/(1-2\chi)\hspace{0.9cm}\text{for }\delta<0,\\
	c_{\old\young}c_{\young\old}\pi^2 (yz)^{-1+\chi}\log(1/x)\hspace{2.78cm}\text{for }\delta=0,
\end{cases}}
where $\chi=(m+\delta)/(2m+\delta)$ as defined earlier. Now we move on to the second step of the proof. In this step, we aim to establish a lower bound for $\Tcal^{\sss (h)}(x)$ by comparing it to a single-type branching process where the offspring generation does not depend on the age of the parent node. To achieve this, we carefully choose the offspring distribution in such a way that we obtain the desired lower bound. To obtain the offspring distribution, we use a stochastic domination argument. Let $\Bcal_1^{\sss(h)}(x)$ denote the $1$-neighborhood of $x$ in $\Tcal^{\sss (h)}(x)$, and for any two random variables $X$ and $Y$, we use $X\preceq Y$ to denote that $Y$ stochastically dominates $X$.
\begin{Lemma}\label{lem:stoch-dom}
	Let $0<z_1\leq z_2<1$. Then $|\Bcal_1^{\sss(h)}({z}_2)|\preceq |\Bcal_1^{\sss(h)}(z_1)|$.
\end{Lemma}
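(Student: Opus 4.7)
The plan is to construct an explicit monotone coupling of the branching processes $\Tcal^{\sss(h)}(z_1)$ and $\Tcal^{\sss(h)}(z_2)$ under which the children of the root at $z_2$ embed into the children of the root at $z_1$. Since $|\Bcal_1^{\sss(h)}(x)|$ equals one plus the number of children of the root in $\Tcal^{\sss(h)}(x)$, the embedding will immediately give the pointwise inequality $|\Bcal_1^{\sss(h)}(z_2)|\le |\Bcal_1^{\sss(h)}(z_1)|$ and hence the claimed stochastic domination.

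Recall the offspring mechanism at a node of age $y$ in $\Tcal^{\sss(h)}$: sample $\Gamma_y\sim \Gamma_{\sf{in}}^\prime(m)$, generate a Poisson process of $\Young$ intermediates on $[y,1]$ with the random intensity $\rho_y(u)=(1-\chi)\Gamma_y u^{-\chi} y^{\chi-1}$, attach to each intermediate $u$ an independent vector of $m-1$ $\mathrm{Uniform}[0,1]$ labels $V_{u,1},\ldots,V_{u,m-1}$ together with two independent $\mathrm{Bernoulli}(\pi)$ percolation coins, declare an elbow grandchild at age $V_{u,k}^{1/\chi}u$ precisely when both coins equal $1$, and finally retain only those grandchildren with age at most $h$.

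To couple, pick a single $G\sim \Gamma_{\sf{in}}^\prime(m)$ and set $\Gamma_{z_1}=\Gamma_{z_2}=G$; this is a legitimate coupling since both variables have the same marginal law. Generate a Poisson process $\Pi_1$ on $[z_1,1]$ with intensity $\rho_{z_1}$ and drive the $z_1$ process with it. For $u\in[z_2,1]\subseteq[z_1,1]$, the inequality $z_1\le z_2$ and $1-\chi>0$ yield
\[
\rho_{z_2}(u)=\rho_{z_1}(u)\,(z_1/z_2)^{1-\chi}\le \rho_{z_1}(u),
\]
so independent thinning of $\Pi_1\cap[z_2,1]$ with retention probability $(z_1/z_2)^{1-\chi}$ produces a Poisson process $\Pi_2\subseteq \Pi_1$ on $[z_2,1]$ with the exact intensity $\rho_{z_2}$; drive the $z_2$ process with $\Pi_2$. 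Assign to each point $u\in\Pi_1$ its uniform labels and percolation coins once, and reuse them verbatim in the $z_2$ process for every $u\in\Pi_2$.

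Under this coupling, each surviving pruned elbow grandchild generated by the $z_2$ process is automatically generated by the $z_1$ process (same intermediate $u$, same $V_{u,k}$, same coins, same cutoff $h$), while the $z_1$ process produces additional grandchildren coming from the extra intermediates on $[z_1,z_2]$ and on $\Pi_1\cap[z_2,1]\setminus \Pi_2$. Therefore the children of the root at $z_2$ form a subset of those at $z_1$, which gives $|\Bcal_1^{\sss(h)}(z_2)|\le |\Bcal_1^{\sss(h)}(z_1)|$ almost surely and the desired stochastic domination. I do not anticipate a real obstacle here: the whole argument rests on the two monotonicities $y\mapsto \rho_y(u)$ decreasing for fixed $u$ and $G$, and $y\mapsto[y,1]$ shrinking, both of which are encoded by the thinning step and the common choice of $\Gamma$.
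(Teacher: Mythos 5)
Your proposal is correct and follows essentially the same route as the paper: both rely on the monotonicity $\rho_{(z_2,\Old)}(\cdot)\le\rho_{(z_1,\Old)}(\cdot)$ after fixing the common Gamma variable, the $U^{1/\chi}$ scaling of the elbow grandchildren's ages, and the common cutoff $h$; you simply make the coupling explicit via Poisson thinning and reused labels, whereas the paper sketches it through coordinatewise stochastic domination of the ordered $\Young$ arrival times $z_{1j}\preceq z_{2j}$. One minor bookkeeping slip: each intermediate $u$ should carry $m$ percolation coins (one for the edge from the root to $u$, plus one per edge from $u$ to each of its $m-1$ $\Old$ children, so the $m-1$ grandchildren at $u$ share the first coin but are otherwise independent), not two; this is cosmetic and does not affect the validity of the coupling.
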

\begin{proof}
	To prove the stochastic domination, we revisit the definition of the $\PPT$. From \eqref{for:pointgraph:poisson}, the ages $\Young$ labeled children of a node $(z,\Old)$ are generated following an inhomogeneous Poisson process with intensity
	\begin{equation}\label{for:lem:stoch-dom:1}
		\rho_{(z,\old)}(x) = (1-\chi)\Gamma_{(z,\old)} \frac{x^{-\chi}}{z^{1-\chi}} \one_{\{x \geq z\}},
	\end{equation}
	where $\Gamma_{(z,\old)}$ is a Gamma random variable that is independent of the choice of $z$. 
	Let $z_{ij}$ denote the age of the $j$-th $\Young$ labeled child of $(z_i,\Old)$.
	
	Since $\rho_{(z_2,\Old)}(x)\leq \rho_{(z_1,\Old)}(x)$ for all $x\geq z_1,~z_{11}$ is stochastically dominated by $z_{21}$. As a consequence of this stochastic domination between the ages, it can similarly be shown that $z_{1j}\preceq z_{2j}$ for all $j$. Here, it is important to remember that the $\Old$ labeled children of $(z_{ij},\Young)$ are the elbow children of $(z_i,\Old)$. From the construction of the $\PPT$,  for all $k\in[m-1]$, the age of the $k$-th $\Old$ labeled child of $(z_{ij},\Young)$ is distributed as $U^{1/\chi}z_{ij}$, where $U$ is an uniform random variable on $[0,1]$. Hence, for all $k\in[m-1]$, the age of the $k$-th $\Old$ labeled child of $(z_{1j},\Young)$ is stochastically dominated by that of $(z_{2j},\Young)$. Obviously, all these stochastic domination results also hold true for the percolated $\PPT$.
	
	The lemma follows from the fact that in $\Tcal^{\sss(h)}(z_1)$ and $\Tcal^{\sss(h)}(z_2)$, all the nodes with age higher than $h$ are discarded. 
\end{proof}
Lemma~\ref{lem:stoch-dom} implies that the offspring distribution of $z_1$ dominates that of $z_2$ in $\Tcal^{\sss(h)}(x)$ for $z_1\leq z_2$. Since the age of all nodes in $\Tcal^{\sss(h)}(x)$ is bounded from above by $h$, the offspring distribution of the node with age $h$ (denoted as $F_{\pi,h}$) is stochastically dominated by the offspring distribution of each of the nodes in $\Tcal^{\sss(h)}(x)$.

\begin{proof}[Proof of Theorem~\ref{thm:critical-percolation:PPT} for non-positive $\delta$]
	Consider a single-type discrete branching process ${\sf{BP}}(\pi, h)$ with offspring distribution $F_{\pi,h}$. From its construction, the size of ${\sf{BP}}(\pi, h)$, denoted as $|{\sf{BP}}(\pi, h)|$, serves as a stochastic lower bound on the size of the percolated $\PPT$.
	
	The expected number of offspring of every node in ${\sf{BP}}(\pi, h)$ can be expressed as
	\begin{equation}\label{eq:offspring:GWBP:1}
		\int\limits_0^h \kappa_1(h,u) \,du =
		\begin{cases}
			c_{\old\young}c_{\young\old}\pi^2 h^{-1+2\chi} {(1-h^{1-2\chi})}/{(\chi(1-2\chi))} & \text{for } \delta < 0,\\
			c_{\old\young}c_{\young\old}\pi^2 \log(1/h) & \text{for } \delta = 0.
		\end{cases}
	\end{equation}
	
	Since $2\chi < 1$ for $\delta < 0$, and $\log(1/h)\to\infty$ as $h\searrow0$, for every fixed $\pi>0$, we can choose the threshold $h$ so small that the RHS of \eqref{eq:offspring:GWBP:1} is greater than $1$, making ${{\sf{BP}}}(\pi, h)$ a supercritical branching process. Consequently, the percolated P\'olya point tree ($\PPT$) also survives with positive probability after percolating with probability $\pi$.
\end{proof}

\subsection{Positive $\delta$ regime}\label{subsec:positive-delta}
We present the proof of Theorem~\ref{thm:critical-percolation:PPT} for positive $\delta$ in two crucial steps. Firstly, we establish that the spectral norm of $\bar{\bfT}_{\kappa}$ corresponds to the inverse critical percolation threshold for the P\'olya point tree. Next, we prove the continuity of $\pi\mapsto\zeta(\pi)$ at the previously obtained critical percolation threshold. Our proof technique for the first step enables us to demonstrate the left continuity of $\zeta(\pi)$ at $\pi_c$. Furthermore, we use \cite[Lemma~8.9]{Grimmet99} to establish the right continuity of $\zeta(\pi)$ at $\pi_c$, thereby proving its continuity at $\pi_c$. 

We proceed to prove that the inverse of the spectral norm of the offspring generator is identical to the $\PPT$'s critical percolation threshold. This proof consists of two parts. First, we show that when $\pi \leq 1/r(\bar{\bfT}_k)$, the process dies out almost surely. To demonstrate this, we adapt an argument used in \cite[Lemma~3.3]{DM13}. 

Throughout this section, we work with the $\PPT$ with type space $\Scal_e$. In Proposition~\ref{lem:subcriticality}, we prove that when percolated with probability $\pi\leq 1/r(\bar{\bfT}_{\kappa})$, the probability that the percolated $\PPT$ has the age of the left-most node below $1$ at some generation converges to $0$. Therefore, after truncating the tree at nodes with age more than $1$, the tree will die out eventually. On the other hand, when the $\PPT$ is percolated with probability $\pi>1/r(\bar{\bfT}_{\kappa})$, we prove in Proposition~\ref{prop:supercriticality} that with some positive probability there exists an ancestral line of nodes where the ages of the nodes converge to $0$ and hence the $\PPT$ survives even in the restricted type space $\Scal$. 
\begin{proof}[Proof of Proposition~\ref{lem:subcriticality}]
	In Theorem~\ref{thm:operator-norm:PPT}, we have derived the eigenfunction of the offspring operator, corresponding to its spectral norm, denoted as $f(x, s)$. Let $Y_{(x, s)}^{(n)}(y, t)\,dy$ represent the empirical measures of the type and age of all offspring in the $n$-th generation of a percolated P\'olya point tree initiated at $(x, s)$.
	With each generation $n\geq 0$, we associate the \emph{score}
	\eqn{\label{eq:def:supermartingale}
		X_\pi^{(n)}(x,s):=\sum\limits_{t\in\{ {\old,\young} \}} \int\limits_0^\infty Y_{(x,s)}^{(n)}(y,t)\frac{f(y,t)}{f(x,s)}\,dy~. }
	Given that $\pi\leq 1/r(\bar{\bfT}_\kappa)$ and $f$ is the eigenfunction of $\bar{\bfT}_{\kappa}$ corresponding to $r(\bar{\bfT}_{\kappa})$, it follows that $(X_\pi^{(n)})_{n\geq 1}$ constitutes a non-negative super-martingale and therefore converges almost surely. Now, let us consider a fixed value of $N\in \R^{+}$.
	Define the event $\mathcal{A}_n(N)$ as
	\eqn{\label{for:lem:subcriticality:2}
	\Acal_n(N)=\{\exists\text{ particle in the $n$-th generation with age less than }N\}~.}
	To complete the proof of Lemma~\ref{lem:subcriticality}, we prove that $\Acal_n(N)$ happens finitely often almost surely, i.e.,
	\eqn{\label{for:lem:subcriticality:3}
	\prob(\Acal_n(N) ~\text{i.o.})=0~,}
	where i.o. means infinitely often. For the simplicity of notation, we shall use $\Acal_n$ to denote $\Acal_n(N)$ throughout this proof. To prove \eqref{for:lem:subcriticality:3}, we define two sequences of stopping times $T_k$ and $S_k$ by
	\eqan{\label{for:lem:subcriticality:02}
	T_k=&\inf\{n>k:|X_\pi^{(n)}-X_\pi^{(n-1)}|>1\}~,\\
	S_k=&\inf\{n>k:\Acal_n \text{ occurs}\}~,\nn
	}
	for $k\geq 1$. We now prove that $\prob\big(|X_\pi^{(n)} - X_\pi^{(n-1)}| > 1 \mid \mathcal{A}_{n-1}\big)$ is positive and independent of $n$, its value depending only on $N$. 
	We couple two samples of the offspring of $(n-1)$-st generation, where we keep all offspring identical except for the left-most child of the left-most node in the $(n-1)$-st generation. The two left-most children of the left-most node in the $(n-1)$-st generation are sampled independently. Let $(A_1,s_1)$ and $(\bar{A}_1,\bar{s}_1)$ be the two copies of the left-most child of the left-most node in the $(n-1)$-st generation.
	
	Note that the distribution of $(A_1,s_1)$ does not depend on $n$, but only on the fact that its parent node has age less than $N$. Since the parent node of $(A_1,s_1)$ has age at most $N$ conditionally on $\Acal_{n-1}$,
	
	\eqn{\label{for:lem:subcriticality:03}
	\prob\Big(\inf\limits_{s,s_1,\bar{s}_1\in\{\old,\young\}}\big|\bfp_{s_1}/\sqrt{A_1}-\bfp_{\bar{s}_1}/\sqrt{\bar{A}_1}\big|/\bfp_s>2/\sqrt{N}\mid \Acal_{n-1}\Big)>\iota(N)>0~,}
	for some function $\iota(N)$ depending only on $N$, i.e., $\iota(N)$ is independent of $n$. Let $X_\pi^{(n)}$ and $\bar{X}_\pi^{(n)}$ be the \emph{score} of the $n$-th generation when the left-most child of the left-most node in the $(n-1)$-th generation are $(A_1,s_1)$ and $(\bar{A}_1,\bar{s}_1)$, respectively. Therefore, adding and subtracting the contribution of other nodes of generation $n$ to the scores, we have
	\eqan{\label{for:lem:subcriticality:04}
	&\prob\Big(\big|\bfp_{s_1}/\sqrt{A_1}-\bfp_{\bar{s}_1}/\sqrt{\bar{A}_1}\big|/\bfp_s>2/\sqrt{N}\mid \Acal_{n-1}\Big)\nn\\
	=&\prob\big( |{X}_\pi^{(n)}-\bar{X}_\pi^{(n)}|>2\mid\Acal_{n-1} \big)~.}
	By the triangle inequality and union bound, we upper bound the RHS of \eqref{for:lem:subcriticality:04} as
	\eqan{\label{for:lem:subcriticality:05}
	\prob\big( |{X}_\pi^{(n)}-\bar{X}_\pi^{(n)}|>2\mid\Acal_{n-1} \big)
	\leq& \prob \Big( |\bar{X}_\pi^{(n)}-{X}_\pi^{(n-1)}|>1~\text{or } |{X}_\pi^{(n)}-{X}_\pi^{(n-1)}|>1\mid \Acal_{n-1}\Big)\nn\\
	\leq& 2\prob \Big( |{X}_\pi^{(n)}-{X}_\pi^{(n-1)}|>1\mid\Acal_{n-1}\Big)~.}
	Assume that $\prob(S_k<\infty)>\gamma>0$ for some fixed $k\in\N$ and $\gamma>0.$ Then
	\eqan{\label{for:lem:subcriticality:06}
	\prob(T_k<\infty)\geq\sum\limits_{n>k}\prob(T_k=n+1\mid S_k=n)\prob(S_k=n)~.}
	Since $S_k$ is a stopping time, $\{S_k=n\}$ is $\Fcal_n$ measurable. Therefore, using \eqref{for:lem:subcriticality:04} and \eqref{for:lem:subcriticality:05},
	\eqn{\label{for:lem:subcriticality:07}
	\prob(T_k=n+1\mid S_k=n) = \prob(|X_\pi^{(n+1)}-X_\pi^{(n)}|>1\mid \Acal_n)>\iota(N)/2~.}
	Hence by \eqref{for:lem:subcriticality:07}, we obtain
	\eqan{\label{for:lem:subcriticality:08}
	\prob(T_k<\infty)>\iota(N)/2\sum\limits_{n>k}\prob(S_k=n)>\gamma \iota(N)/2~.}
	Since $\big(X_\pi^{(n)}\big)_{n\geq 1}$ converges almost surely, $\prob(T_k<\infty)\to0$ as $k\to\infty$. Therefore, $\prob(S_k<\infty)\to0$ as $k\to \infty$. Note that $\{S_k<\infty\}=\bigcup\limits_{n>k}\Acal_n$, and it is a decreasing sequence. Hence
	\eqn{\label{for:lem:subcriticality:09}
	\prob(\Acal_n~\text{i.o.})=\prob\Big(\bigcap\limits_{k\geq 1}\{S_k<\infty\}\Big)=\lim\limits_{k\to\infty}\prob(S_k<\infty)=0~.}
	Hence, $\Acal_n$ occurs for only finitely many $n$, proving that the age of the left-most node of the $n$-th generation diverges to $\infty$ almost surely. Therefore, upon truncating the nodes with age more than $1,~\PPT$ dies out almost surely.
\end{proof}

Next, in Proposition~\ref{prop:supercriticality}, we prove that the $\PPT$ survives when it is percolated with probability more than $1/r(\bar{\bfT}_k)$. This, along with Proposition~\ref{lem:subcriticality}, proves that $1/r(\bar{\bfT}_\kappa)$ is the critical percolation threshold of the P\'olya point tree. 

To prove Proposition~\ref{prop:supercriticality}, we truncate the $\PPT$ suitably and obtain the spectral norm of the offspring operator of the truncated $\PPT$. Then we show that upon percolating with probability $\pi$, the truncated tree has an ancestral line of nodes whose ages converge to $0$ with positive probability, proving that $\PPT$ also survives the percolation with positive probability.

\paragraph{Truncation.}
In the $\PPT$, consider the subtree where the age of the children of a node $(x,s)$ are restricted to the range $(0,bx]$ for some constant $b>1$. We call this truncated subtree the $b-\PPT$. Note that the mean-offspring kernel $\kappa$ changes in $b-\PPT$ to
\eqn{\label{eq:truncated-kernel}
\kappa_b((x,s),(y,t))=\kappa((x,s),(y,t))\one_{\{y\leq bx\}}~.}

Now we show that with positive probability there exists an infinite path from root such that the age of the nodes converges to $0$ in the $b-\PPT$. First we show that the integral operator $\bar{\bfT}_{\kappa_b}$ also has an eigenfunction corresponding to its spectral norm:

\begin{Lemma}[spectral norm of truncated operator]\label{lem:spectral-norm:truncated-operator}
	The spectral norm of the integral operator $\bar{\bfT}_{\kappa_b}$ is
	\eqn{\label{eq:lemma:spectral-radius}
	r(\bar{\bfT}_{\kappa_b}) = \frac{(1+q)c_{\old\old}+\sqrt{((1-q)c_{\old\old})^2+4qc_{\old\young}c_{\young\old}}}{2\chi-1}~,}
	where $q=1-b^{1/2-\chi}$. Furthermore, the eigenfunction of $\bar{\bfT}_{\kappa_b}$ corresponding to its spectral norm is given by
	\eqn{\label{eq:lemma:spectral-radius:efunc}
	h(x,s)=\frac{\bfu_s^b}{\sqrt{x}}~,}
	where $\bfu^b=(\bfu_{\old}^b,\bfu_{\young}^b)$ is the eigenvector of $\bfM_b$ corresponding to its largest eigenvalue and where $\bfM_b$ is defined as
	\[
		\bfM_b=\begin{bmatrix}
			c_{\old\old} & c_{\old\young}(1-b^{1/2-\chi})\\
			c_{\young\old} & c_{\young\young}(1-b^{1/2-\chi})
		\end{bmatrix}~.
	\]
\end{Lemma}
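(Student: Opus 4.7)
The plan is to mirror the proof of Theorem~\ref{thm:operator-norm:PPT} with the separable ansatz $h(x,s)=\bfu_s/\sqrt{x}$, exploiting that the truncation indicator $\one_{\{y\leq bx\}}$ affects only the integration against $\young$-type children: every $\old$-child of a node at age $x$ already satisfies $y<x\leq bx$ since $b>1$. Concretely, I would (i) verify the ansatz is an eigenfunction via a direct computation, reducing the spectral problem to a $2\times 2$ matrix eigenvalue problem for $\bfM_b$; (ii) apply Perron--Frobenius to the positive matrix $\bfM_b$ to read off its largest eigenvalue together with a strictly positive eigenvector $\bfu^b$; and (iii) identify this eigenvalue with the operator's spectral norm by reusing the argument of Theorem~\ref{thm:operator-norm:PPT}.

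\textbf{Key computation.} Substituting $h(y,t)=\bfu_t/\sqrt{y}$ into $(\bar{\bfT}_{\kappa_b}h)(x,s)$ and splitting by $t$, the $\old$-piece integrates $y$ over $(0,x]$ and reproduces the untruncated value $c_{s\old}\bfu_\old/[(\chi-1/2)\sqrt{x}]$, whereas the $\young$-piece now integrates only over $(x,bx]$. Using $\chi>1/2$ (which holds because $\delta>0$) one computes
\[
\int_x^{bx} y^{-\chi-1/2}\,dy=\frac{1-b^{1/2-\chi}}{(\chi-1/2)\sqrt{x}}=\frac{q}{(\chi-1/2)\sqrt{x}},
\]
so that the $\young$-piece equals $c_{s\young}\bfu_\young\,q/[(\chi-1/2)\sqrt{x}]$. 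Hence $\bar{\bfT}_{\kappa_b}h=\lambda h$ is equivalent to $\bfM_b\bfu=(\chi-1/2)\lambda\,\bfu$, with $\bfM_b$ precisely as in the statement. Since $\bfM_b$ is entrywise positive, Perron--Frobenius supplies a simple largest eigenvalue with a strictly positive eigenvector $\bfu^b$; using $c_{\old\old}=c_{\young\young}$ the characteristic polynomial yields
\[
\lambda_{\max}(\bfM_b)=\frac{(1+q)c_{\old\old}+\sqrt{((1-q)c_{\old\old})^2+4q\,c_{\old\young}c_{\young\old}}}{2},
\]
which after dividing by $\chi-1/2=\delta/[2(2m+\delta)]$ matches the claimed expression for $r(\bar{\bfT}_{\kappa_b})$ and identifies $h(x,s)=\bfu_s^b/\sqrt{x}$ as the corresponding eigenfunction.

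\textbf{Main obstacle.} The delicate step is upgrading the Perron eigenvalue of $\bfM_b$ to the full spectral norm of $\bar{\bfT}_{\kappa_b}$ on $L^2(\Scal_e,\lambda_e)$, since the positive eigenfunction $\bfu_s^b/\sqrt{x}$ is not itself square-integrable. This is exactly the difficulty already addressed for $\bar{\bfT}_{\kappa}$ in Theorem~\ref{thm:operator-norm:PPT}. Because $\kappa_b$ differs from $\kappa$ only by the scale-invariant multiplicative cut-off $\one_{\{y\leq bx\}}$, and therefore shares the crucial homogeneity $\kappa_b(cx,cy)=c^{-1}\kappa_b(x,y)$, the identification argument of Theorem~\ref{thm:operator-norm:PPT} transfers essentially verbatim with $\bfM$ replaced by $\bfM_b$: under the logarithmic change of variable $u=\log x$ the operator becomes translation-invariant in $u$, and Mellin/Fourier analysis along $u$ combined with Krein--Rutman applied to compact logarithmic truncations pins down $r(\bar{\bfT}_{\kappa_b})$ as the Perron eigenvalue of $\bfM_b$ divided by $\chi-1/2$.
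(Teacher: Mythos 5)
Your proposal follows essentially the same route as the paper: (i) substitute the separable ansatz $h(x,s)=\bfu_s/\sqrt{x}$, note that the truncation $\one_{\{y\le bx\}}$ is vacuous on the $\old$-piece and effective only on the $\young$-piece, reduce to the $2\times 2$ matrix $\bfM_b$; (ii) read off the Perron eigenvalue and eigenvector; (iii) upgrade to the spectral norm by mimicking Theorem~\ref{thm:operator-norm:PPT}. That is exactly what the paper does, so your computation is on target and the reduction to $\bfM_b$ is the key step.

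Two small points. First, the displayed intermediate integral is off: $\int_x^{bx} y^{-\chi-1/2}\,dy = (1-b^{1/2-\chi})\,x^{1/2-\chi}/(\chi-\tfrac12)$, with a factor $x^{1/2-\chi}$ rather than $x^{-1/2}$; the missing prefactor $x^{\chi-1}$ coming from the kernel $\kappa((x,s),(y,\young))=c_{s\young}x^{-(1-\chi)}y^{-\chi}$ is what produces the $1/\sqrt{x}$ in the final expression, which you do quote correctly. Second, the machinery you invoke in the ``Main obstacle'' paragraph (Mellin/Fourier along $\log x$, Krein--Rutman on logarithmic truncations) is more than the paper actually uses and more than is needed here: the paper's Lemma~\ref{lem:upperbound} argument rests on the Schur test, which requires only positive (not $L^2$) test functions and hence applies verbatim with $\bfp,\bfq$ replaced by the appropriate singular vectors of $\bfM_b$, giving the upper bound $r(\bar{\bfT}_{\kappa_b})$ on the operator norm; combined with Gelfand's formula and the exhibited eigenfunction this pins down the spectral norm without any harmonic analysis. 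So your worry about the eigenfunction not being square-integrable is legitimate, but the resolution the paper adopts is the same Schur-test bookkeeping you already carried out, not a fresh Mellin-transform argument.
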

The proof of this lemma follows similarly as Theorem~\ref{thm:operator-norm:PPT}. So we defer the proof of this lemma to Section~\ref{sec:spectral_radius}. 

Note that $q\to1$ as $b\to\infty$, and consequently $r(\bar{\bfT}_{\kappa_b})\to r(\bar{\bfT}_{\kappa})$ as $b\to\infty$. Therefore, for any $\pi>1/r(\bar{\bfT}_{\kappa}),$ there exists a $b$ large enough such that $\pi r(\bar{\bfT}_{\kappa_b})>1~.$  We perform a change of measure to prove that the truncated tree survives percolation with positive probability. For this change of measure argument, we define a martingale similar to \eqref{eq:def:supermartingale} as
\eqn{\label{eq:def:martingale}
M^{\sss(n)}_b (x,s) :=\frac{1}{\rho_b^n}\sum\limits_{t\in\{ {\old,\young} \}} \int\limits_0^\infty Y_{(b)}^{(n)}((x,s),(y,t))\frac{h(y,t)}{h(x,s)}\,dy~,}
where $\rho_b=\pi r(\bar{\bfT}_{\kappa_b})$ and $Y_{(b)}^{(n)}((x,s),(y,t))$ puts a Dirac delta mass at the types of $n$-th generation offspring of $(x,s)$ in the percolated $b-\PPT$. Since $h$ is an eigenfunction of $\bar{\bfT}_{\kappa_b},~M^{\sss(n)}_b (x,s)$ is a martingale  for any choice of $(x,s)\in\Scal_e$. Since ${M}^{\sss(n)}_b (x,s)$ is a non-negative $L^1$ martingale, it converges almost surely to a non-negative martingale limit. We use the following Kesten-Stigum theorem for $b-\PPT$ to ensure that the martingale limit is non-zero with positive probability:
\begin{Theorem}[Kesten-Stigum Theorem for $b-\PPT$]\label{K-S-multi-type}
	$M_b^{(n)}(x,s)$ converges almost surely to a non-zero limit under the following assumption
	\eqn{\label{K-S-cond}
		\sup\limits_{(x,s)\in\Scal_e}\E\Big[ \left(M_b^{(1)}(x,s)\right)^{1+\eta} \Big]<\infty~\mbox{for some }\eta>0.}
\end{Theorem}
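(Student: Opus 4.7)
The strategy is to upgrade the $\prob$-a.s.\ convergence of the non-negative martingale $(M_b^{(n)}(x,s))_{n\geq 0}$ (whose martingale property follows from $h$ being a $\bar{\bfT}_{\kappa_b}$-eigenfunction with eigenvalue $r(\bar{\bfT}_{\kappa_b}) = \rho_b/\pi$ together with the branching property of the percolated $b-\PPT$) to $L^1$-convergence. Once this is achieved, $\E[M_b^{(\infty)}(x,s)] = \E[M_b^{(0)}(x,s)] = 1$, so the limit is automatically non-zero with positive probability. I would establish $L^1$-convergence by proving $(M_b^{(n)})$ is bounded in $L^p$ uniformly in $n$ for $p := 1 + (\eta \wedge 1) \in (1,2]$; note that assumption~\eqref{K-S-cond} gives $C := \sup_{(x,s) \in \Scal_e}\E[|M_b^{(1)}(x,s) - 1|^p] < \infty$ after a triangle-inequality bound.

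Denoting by $\mathcal{G}_n$ the $n$-th generation of the percolated $b-\PPT$, the martingale increment decomposes as
\[
M_b^{(n+1)} - M_b^{(n)} = \frac{1}{\rho_b^n\, h(x,s)}\sum_{v\in\mathcal{G}_n} h(x_v,s_v)\bigl(\widetilde{M}_v - 1\bigr),
\]
where, conditional on $\mathcal{F}_n$, the variables $\widetilde M_v$ are independent copies of $M_b^{(1)}(x_v,s_v)$. The von Bahr--Esseen inequality for conditionally independent mean-zero terms with $p \in (1,2]$ then yields
\[
\E\bigl[|M_b^{(n+1)} - M_b^{(n)}|^p\bigm|\mathcal{F}_n\bigr] \leq \frac{2C}{\rho_b^{np}\, h(x,s)^p}\sum_{v \in \mathcal{G}_n} h(x_v,s_v)^p,
\]
and the many-to-one formula $\E\bigl[\sum_{v\in\mathcal{G}_n} h(x_v,s_v)^p\bigr] = \pi^n (\bar{\bfT}_{\kappa_b}^n h^p)(x,s)$ reduces the estimate to bounding iterates of $\bar{\bfT}_{\kappa_b}$ applied to $h^p(y,t) = (\bfu_t^b)^p y^{-p/2}$. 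Rerunning the ansatz-based derivation behind Lemma~\ref{lem:spectral-norm:truncated-operator} with the exponent $-p/2$ in place of $-1/2$ reduces this in turn to the Perron eigenvalue $\rho_b(p)$ of an explicit $2\times 2$ matrix $\bfM_b(p)$, which at $p = 1$ returns $\rho_b(1) = r(\bar{\bfT}_{\kappa_b})$. For $p > 1$ sufficiently close to $1$ one then obtains $\pi\rho_b(p) < \rho_b^p$, yielding geometric decay of $\E[|M_b^{(n+1)} - M_b^{(n)}|^p]$ and hence $\sup_n \E[(M_b^{(n)})^p] < \infty$ by Minkowski's inequality applied to the telescoping decomposition.

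The principal obstacle is verifying the strict inequality $\pi\rho_b(p) < \rho_b^p$ for $p$ slightly above $1$: this amounts to strict convexity of the Perron root of $\bfM_b(p)$ at $p = 1$, which in turn follows from irreducibility of $\bfM_b$ on the two discrete types $\{\Old,\Young\}$ together with standard Perron--Frobenius sensitivity analysis. Alternatively, one could bypass the $L^p$-boundedness route entirely and invoke the Lyons--Pemantle--Peres spine decomposition, reducing the Kesten--Stigum conclusion to an $L\log L$-condition along the spine that is implied a fortiori by the $(1+\eta)$-moment hypothesis~\eqref{K-S-cond}. Either route delivers uniform integrability of $(M_b^{(n)})$, hence $L^1$-convergence to a limit with $\E[M_b^{(\infty)}(x,s)] = 1$, completing the proof.
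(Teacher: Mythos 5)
Your proposal identifies the right landscape (and even names the spine-decomposition alternative the paper actually pursues), but both routes you sketch have the same genuine gap: you treat the $(1+\eta)$-moment condition~\eqref{K-S-cond} as the only ingredient needed, when in fact a second, independent condition --- a ``spine drift'' or Biggins-type condition --- must be verified, and your arguments for it do not work.

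In your main ($L^p$-boundedness) route, the crux is the strict inequality $\pi\rho_b(p) < \rho_b^p$ for $p$ slightly above $1$. Since $\pi\rho_b(1)=\rho_b$, both sides agree at $p=1$, so what is actually required is a first-derivative comparison $\frac{d}{dp}\log\!\big(\pi\rho_b(p)\big)\big|_{p=1} < \log\rho_b$. This is Biggins's condition for $L^p$-boundedness of branching random walk martingales, and it is \emph{not} a consequence of ``strict convexity of the Perron root'': log-convexity of $p\mapsto\rho_b(p)$ (which does hold, by Perron--Frobenius plus H\"older applied to the kernel entries $\int y^{-p/2}\kappa_b\,dy$) is perfectly compatible with either sign of the derivative gap at $p=1$. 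The derivative condition is equivalent to the spine having strictly negative drift in $\log$-age, which is precisely what the paper computes explicitly in the proof of Proposition~\ref{prop:supercriticality} and re-uses in equation~\eqref{eq:K-S:8}: one needs ${\bf{\upsilon}}_\old^b\E[\log R_1(\Old)] + {\bf{\upsilon}}_\young^b\E[\log R_1(\Young)] < 0$, and proving this uses the specific structure of $\bfM_b$ (via ${\bf{\upsilon}}_\old^b>{\bf{\upsilon}}_\young^b$ and the explicit formulas~\eqref{for:prop:supercriticality:4-1}--\eqref{for:prop:supercriticality:4-2}). Your alternative (LPP spine) route has the same hole: you claim the $L\log L$/$(1+\eta)$-moment condition along the spine suffices ``a fortiori,'' but because the type space $\Scal_e$ is unbounded, the paper's spine argument requires controlling \emph{two} families of events --- $\Bcal_r$ (handled by the moment condition via Claim~\ref{lem:equivalent:condition}) and $\Ccal_r$ (the eigenfunction ratio $h(x_{rj^\star})/h(x_0)$ staying subexponential), and the latter again requires the negative spine drift estimate~\eqref{eq:K-S:8}, which is not implied by any moment hypothesis. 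The theorem is therefore not a free-standing abstract Kesten--Stigum statement; its proof leans on the concrete geometry of the $b$-$\PPT$ to verify the drift condition, and that step cannot be skipped in either of your routes.
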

Next we prove that $M^{\sss(1)}_b (x,s)$ has finite $(1+\eta)$-th moment for some $\eta>0$ making the martingale uniformly integrable.
The following lemma provides us this uniform finite $(1+\eta)$-th moment for some $\eta>0$. Consequently, our martingale satisfies the $L\log L$ condition for Kesten-Stigum theorem, proved in \ref{K-S-multi-type} for $b-\PPT$ and hence $M^{\sss(n)}_b (x,s)$ converges to a positive limit with positive probability:
\begin{Lemma}[Finite $(1+\eta)$-th moment]
	\label{thm:finite:higher-moment}
	Fix any $\eta<2\chi-1$. Then
	\eqn{\label{eq:thm:finite:higher-moment}
		\sup\limits_{(x,s)\in \Scal} \E\Big[ \big(M_b^{(1)}(x,s)\big)^{1+\eta} \Big]<\infty~.}
\end{Lemma}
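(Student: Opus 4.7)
The plan is to expand $M_b^{(1)}(x,s)$ using $h(y,t)=\bfu_t^b/\sqrt{y}$ together with the explicit description of the first-generation children of $(x,s)$ in the percolated $b$-$\PPT$. Letting $\{(Y_j,T_j)\}_j$ denote those children, one obtains
\begin{align*}
M_b^{(1)}(x,s)=\frac{\sqrt{x}}{\rho_b\,\bfu_s^b}\Bigl(\bfu_\old^b\,S_\old+\bfu_\young^b\,S_\young\Bigr),
\end{align*}
where $S_\old$ is a sum over the at most $m$ old children (with ages $U_i^{1/\chi}x$ for i.i.d.\ uniform $U_i$, each retained with an independent Bernoulli$(\pi)$ indicator), and $S_\young$ runs over the young children, whose retained ages, conditional on the strength $\Gamma_{(x,s)}$, form an inhomogeneous Poisson process on $[x,bx]$ with intensity $\pi(1-\chi)\Gamma_{(x,s)}y^{-\chi}x^{\chi-1}$. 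Since $\bfM_b$ has strictly positive entries for $b>1$ (as $\chi>1/2$ when $\delta>0$), Perron--Frobenius forces $\bfu_s^b$ to be strictly positive and bounded uniformly in $s$. The proof therefore reduces to showing that both $\E[S_\old^{1+\eta}]$ and $\E[S_\young^{1+\eta}]$ are at most a constant multiple of $x^{-(1+\eta)/2}$.

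For $S_\old$ this is immediate: the sum is finite with at most $m$ terms, so by a crude power-mean inequality its $(1+\eta)$-th moment is bounded, up to constants, by $x^{-(1+\eta)/2}\,\E[U^{-(1+\eta)/(2\chi)}]$. This expectation is finite precisely when $(1+\eta)/(2\chi)<1$, i.e.\ $\eta<2\chi-1$---and this single step pinpoints where the hypothesis on $\eta$ enters.

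For $S_\young$ I would condition on $\Gamma_{(x,s)}$ and apply a Marcinkiewicz--Zygmund / von Bahr--Esseen inequality for Poisson integrals in the range $1+\eta\in(1,2]$, yielding
\begin{align*}
\E\bigl[S_\young^{1+\eta}\,\big|\,\Gamma\bigr]\le C_\eta\Bigl[\bigl(\E[S_\young\mid\Gamma]\bigr)^{1+\eta}+\int_x^{bx}y^{-(1+\eta)/2}\,\pi(1-\chi)\Gamma\, y^{-\chi}x^{\chi-1}\,dy\Bigr].
\end{align*}
With $\chi>1/2$, elementary power-law calculations show that each of the two terms evaluates to a constant multiple of $(\Gamma+\Gamma^{1+\eta})\,x^{-(1+\eta)/2}$. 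Taking expectation and using that the Gamma-distributed strength $\Gamma$ has moments of every positive order completes this bound. Combining the two estimates, the $x^{(1+\eta)/2}$ prefactor from the normalization $\sqrt{x}/h(x,s)$ cancels the $x^{-(1+\eta)/2}$ growth, yielding a bound uniform in $(x,s)\in\Scal$.

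The main obstacle I anticipate is the $S_\young$ piece for a fractional $1+\eta$: one must invoke a Marcinkiewicz--Zygmund-type inequality for an inhomogeneous Poisson integral and carry out the power-law integration so that the divergence near $y=x$ is absorbed precisely into the $x^{-(1+\eta)/2}$ scaling. The rest of the argument is a routine tracking of powers of $x$ and moments of $\Gamma$, with the cancellation of $x$-factors forced by the scaling symmetry of the eigenfunction $h$.
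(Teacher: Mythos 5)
Your proposal is correct and arrives at the result by a route that overlaps with the paper's in its key observation but differs in the machinery used to handle the $\Young$ children. Both proofs factor out the $x^{(1+\eta)/2}$ scaling coming from $h(x,s)^{-(1+\eta)}$, split the first-generation sum by child type using independence of the $\Old$ and $\Young$ ages, and both ultimately pin the constraint $\eta<2\chi-1$ on the same moment $\E[U^{-(1+\eta)/(2\chi)}]$ for the uniform draw behind the $\Old$ children's ages $U^{1/\chi}x$ (the paper's $\int_0^x y^{\chi-3/2-\eta/2}\,dy$ in \eqref{for:lem:finite:higher-moment:002} is exactly this integral in disguise). Where you diverge from the paper is the treatment of $S_\young$: you invoke a Rosenthal/Marcinkiewicz--Zygmund bound for Poisson integrals, $\E[(\int g\,dN)^p\mid\Gamma]\le C_p\bigl[(\int g\,d\nu)^p+\int g^p\,d\nu\bigr]$ for $p=1+\eta\in(1,2]$, whereas the paper instead expands $\bigl(\sum_i a_i\bigr)^{1+\eta}=\sum_i a_i\bigl(\sum_j a_j\bigr)^\eta$, separates the diagonal $j=i$ from the off-diagonal, and controls the off-diagonal piece via conditioning on $n_\young$, Jensen's inequality, Wald's identity, and H\"older's inequality to handle $\E[n_\young(n_\young-1)^\eta]$ before invoking the Gamma second moment. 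Your route replaces several of those sub-lemmas with one standard Poisson-integral inequality and cleanly decouples the two types from the start via $(a+b)^{1+\eta}\le 2^\eta(a^{1+\eta}+b^{1+\eta})$, which is somewhat more modular; the paper's route is more elementary and self-contained, with no external moment inequality invoked. Both give a bound uniform in $(x,s)$ because the $x$-dependence cancels exactly by the scale invariance of $h$. One small caveat: your appeal to strict positivity of $\bfM_b$ and Perron--Frobenius to get $\bfu_\old^b,\bfu_\young^b>0$ implicitly uses $m\ge 2$ (so that $c_{\young\old}>0$); for $m=1$ the matrix degenerates and $\bfu_\young^b=0$, but this edge case is equally problematic for the paper's martingale and lies outside the regime of the main theorem.
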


First we move on to prove Proposition~\ref{prop:supercriticality} subject to Theorem~\ref{K-S-multi-type} and Lemma~\ref{thm:finite:higher-moment} and then prove Theorem~\ref{K-S-multi-type} and Lemma~\ref{thm:finite:higher-moment}.
Let us denote $M_b(x,s)$ as the $n\to \infty$ limit of $M^{(n)}_b(x,s)$. Since $M^{(1)}_b(x,s)$ has a uniformly bounded $(1+\eta)$-th moment, it can also be shown that the limit $M_b(x,s)$ is positive with positive probability.

We next use a spine decomposition-type argument. Let $\prob_{(x,s)}$ denote the law of the $b-\PPT$ rooted at $(x,s)$. Now we look at $b-\PPT$ under the tilted measure
\eqn{\label{def:tilted-measure}
\,d\Qbb_{(x,s)}=M_b(x,s)\,d\prob_{(x,s)}~,}
Given a $b-\PPT$ rooted at $(x,s)$, we build a measure $\mu^\star$ on the set of all infinite paths $(x,s)=\bfx_0,\bfx_1,\ldots$ in the $b-\PPT$, such that, for any permissible sequence,
\eqn{\label{eq:mu:def}
\mu\big( \{\bfx_0,\bfx_1,\ldots\}:\bfx_0=(x,s),\ldots,\bfx_n=(y_n,t_n) \big)= \rho_b^{-n} \frac{M_b(\bfx_n)}{M_b(\bfx_0)}~.}
Let $(t_n)_{n\geq 0}$ be the sequence of labels in an infinite path $(x_n)_{n\geq 0}$. The next lemma shows that this label sequence thus generated is a stationary Markov chain with invariant distribution ${\bf{\upsilon}}^b=({\bf{\upsilon}}_\old^b,{\bf{\upsilon}}_\young^b)$ for some $({\bf{\upsilon}}_\old^b,{\bf{\upsilon}}_\young^b)$. The transition probability of the Markov chain is given as
\eqn{\label{eq:transition-prob:MC}
p_{s,t}=\frac{(\bfM_b)_{s,t}\bfu_t^b}{\lambda_{\bfM}^{\sss (b)}\bfu_s^b}\qquad\text{for }s,t\in\{\Old,\Young\}~.}
\begin{Lemma}[Markov property of labels]\label{lem:MC:label}
	The random sequence of labels of nodes in a path chosen from $\mu^\star$ is a stationary Markov chain with transition probabilities given by \eqref{eq:transition-prob:MC}.
\end{Lemma}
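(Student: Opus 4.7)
The plan is to identify $\mu^\star$ with the law of the spine in a size-biased decomposition of the percolated $b$-$\PPT$ under the tilt $d\Qbb_{(x,s)}=M_b(x,s)\,d\prob_{(x,s)}$, and then to reduce the type sequence to an elementary two-state Markov chain by integrating out ages, exploiting the product form $h(x,s)=\bfu_s^b/\sqrt{x}$ of the eigenfunction from Lemma~\ref{lem:spectral-norm:truncated-operator}. The starting point is the martingale recursion
\begin{equation*}
h(x,s)\,M_b(x,s)=\rho_b^{-1}\sum_{\bfy\text{ child of }(x,s)}h(\bfy)\,M_b(\bfy),
\end{equation*}
obtained by passing to the limit in $M_b^{(n)}$. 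This shows that the prescription in \eqref{eq:mu:def} defines a consistent conditional law on rays, in which the spine jumps from $\bfx_n$ to child $\bfy$ with probability proportional to $h(\bfy)M_b(\bfy)$. Taking $\Qbb$-expectation over the subtree below $\bfx_n$ (using that $\E[M_b(\bfy)]=1$ for each fresh subtree and that $\pi\kappa_b((x,s),\cdot)$ is the offspring intensity after percolation, together with $\rho_b=\pi\,r(\bar{\bfT}_{\kappa_b})$), one obtains the annealed one-step transition kernel of the joint age--type process $(\bfx_n)$ along the spine,
\begin{equation*}
q\big((x,s),(y,t)\big)\,dy=\frac{\kappa_b\big((x,s),(y,t)\big)\,h(y,t)}{r(\bar{\bfT}_{\kappa_b})\,h(x,s)}\,dy,
\end{equation*}
which integrates to $1$ over $\Scal_e$ by the eigenfunction identity $\bar{\bfT}_{\kappa_b}h=r(\bar{\bfT}_{\kappa_b})h$.

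Next I would compute the marginal transition probabilities on types by integrating $q((x,s),(y,t))$ over $y$. Substituting $h(y,t)=\bfu_t^b/\sqrt{y}$ and the explicit $\kappa_b$ from \eqref{eq:kernel:offspring-operator:PPT} (restricted to $y\le bx$), the $t=\Old$ contribution is an integral over $(0,x)$ and the $t=\Young$ contribution over $(x,bx)$, yielding respectively
\begin{equation*}
\int_0^x q\big((x,s),(y,\Old)\big)\,dy=\frac{c_{s\Old}\,\bfu_\Old^b}{(\chi-\tfrac12)\,r(\bar{\bfT}_{\kappa_b})\,\bfu_s^b},\qquad\int_x^{bx}q\big((x,s),(y,\Young)\big)\,dy=\frac{c_{s\Young}\,(1-b^{1/2-\chi})\,\bfu_\Young^b}{(\chi-\tfrac12)\,r(\bar{\bfT}_{\kappa_b})\,\bfu_s^b}.
\end{equation*}
The crucial point is that both expressions are independent of $x$; recognising $c_{s\Old}=(\bfM_b)_{s,\Old}$ and $c_{s\Young}(1-b^{1/2-\chi})=(\bfM_b)_{s,\Young}$, and using the eigenvalue relation $\lambda_{\bfM}^{\sss(b)}=(\chi-\tfrac12)\,r(\bar{\bfT}_{\kappa_b})$ which underlies Lemma~\ref{lem:spectral-norm:truncated-operator}, the right-hand sides reduce exactly to $p_{s,t}$ in \eqref{eq:transition-prob:MC}.

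Since the age-integrated transition depends only on the current type, the label sequence $(t_n)_{n\ge0}$ is a time-homogeneous Markov chain on $\{\Old,\Young\}$ with transition matrix $(p_{s,t})$. For stationarity, I would identify the invariant law explicitly as $\upsilon_s^b\propto\bfu_s^b\,\bfv_s^b$, where $\bfv^b$ is the left Perron eigenvector of $\bfM_b$ associated with $\lambda_{\bfM}^{\sss(b)}$: a direct algebraic check using $\bfv^b\bfM_b=\lambda_{\bfM}^{\sss(b)}\bfv^b$ verifies $\upsilon^b P=\upsilon^b$, so sampling the root label according to $\upsilon^b$ makes the whole chain strictly stationary. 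The main obstacle I anticipate is giving a clean justification of the spine decomposition in our mixed continuous--discrete type-space branching process: this is not the textbook Kesten--Stigum framework, so the passage from \eqref{eq:mu:def} to the kernel $q$ above must be argued via an explicit size-biased tree construction, leaning on the $L^{1+\eta}$ control of Lemma~\ref{thm:finite:higher-moment} and Theorem~\ref{K-S-multi-type} to ensure that $M_b$ is well-defined and positive on survival. Once that is in place, everything reduces to the elementary integrations above.
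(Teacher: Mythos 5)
Your proposal is correct and takes essentially the same route as the paper: expand the joint $\Qbb$-law of the spine via the product of one-step kernels $\pi\kappa_b\cdot h/(\rho_b h)$ implied by \eqref{eq:mu:def}, observe that integrating out the age variable gives an $x$-independent type-to-type kernel (because $h(x,s)=\bfu_s^b/\sqrt{x}$ makes $\int\kappa_b((x,s),(y,t))\,h(y,t)/h(x,s)\,dy$ a constant multiple of $(\bfM_b)_{s,t}\bfu_t^b/\bfu_s^b$), and identify the result with \eqref{eq:transition-prob:MC} using $(\chi-\tfrac12)\,r(\bar{\bfT}_{\kappa_b})=\lambda_{\bfM}^{\sss(b)}$. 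Two minor remarks: the paper's own proof of Lemma~\ref{lem:MC:label} only establishes the Markov property and reads off the transition matrix, leaving the identification of the stationary law implicit, so your explicit verification via the left Perron eigenvector (that $\upsilon^b_s\propto\bfu_s^b\bfv_s^b$ is invariant) is a small but genuine addition; conversely, the preliminary paragraph deriving $\mu$ from a martingale recursion and size-biased tree is more machinery than the paper invokes here, since \eqref{eq:mu:def} is taken as the definition of $\mu$ and the lemma's content is purely the age-integration computation.
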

\begin{proof}
	For proving the Markov property, we look at the probability of $t_{n+1}$ conditionally on $\{t_0,t_1,\ldots,t_{n}\}$ and show that the probability depends only on $t_n$. Let $\bfa_{(n)}=\big(a^{(0)},a^{(1)},\ldots,a^{(n)}\big)\in\{\Old,\Young\}^{n}.$ Then, from the definition of conditional probability,
	\eqan{\label{for:lem:MC:01}
	\Qbb\big(t_{n+1}=\Old|(t_0,t_1,\ldots,t_n)=\bfa_{(n)}\big)=\frac{\Qbb\big( (t_0,t_1,\ldots,t_n,t_{n+1})=(\bfa_{(n)},\Old) \big)}{\Qbb\big((t_0,t_1,\ldots,t_n)=\bfa_{(n)}\big)}~.}
	Now the numerator of the RHS in \eqref{for:lem:MC:01} can be simplified as
	\eqan{\label{for:lem:MC:02}
	&\Qbb\big( (t_0,t_1,\ldots,t_n,t_{n+1})=(\bfa_{(n)},\Old) \big)\nn\\
	=&\rho_b^{-(n+1)}\int\limits_{(0,\infty)^{n}}\int_0^\infty \pi\kappa_b\big((x_n,a^{(n)}),(y,\Old)\big)\frac{h(y,\Old)}{h(x_n,a^{(n)})}\,dy\nn\\ &\qquad\qquad\times\prod\limits_{i=1}^{n}\pi\kappa_b\big((x_{i-1},a^{(i-1)}),(x_{i},a^{(i)})\big)\frac{h(x_i,a^{(i)})}{h(x_{i-1},a^{(i-1)})}\,dx_i\nn\\
	=&p_{a^{(n)},\old}\rho_b^{-n}\int\limits_{(0,\infty)^{n}}\prod\limits_{i=1}^{n}\pi\kappa_b\big((x_{i-1},a^{(i-1)}),(x_{i},a^{(i)})\big)\frac{h(x_i,a^{(i)})}{h(x_{i-1},a^{(i-1)})}\,dx_i\nn\\
	=&p_{a^{(n)},\old} \Qbb\big((t_0,t_1,\ldots,t_n)=\bfa_{(n)}\big)~.
	}
Since $\Qbb\big( (t_0,t_1,\ldots,t_n,t_{n+1})=(\bfa_{(n)},\Young) \big)=1-\Qbb\big( (t_0,t_1,\ldots,t_n,t_{n+1})=(\bfa_{(n)},\Old) \big),$ by \eqref{for:lem:MC:01} and \eqref{for:lem:MC:02} the lemma follows immediately.
\end{proof}

Now we investigate the age of the $n$-th node of the truncated P\'olya point tree in an infinite path sampled according to $\mu^\star$. These ages turn out to be multiplicative in nature:
\begin{Lemma}[Multiplicative property of the joint law of ages]\label{lem:multiplicative-property}
	Let $\{(X_0,t_0),(X_1,t_1),\ldots\}$ be an infinite path sampled from $\mu$. Then,
	\eqn{\label{eq:lem:multiplicative-property}
	(X_n)_{n\geq 0}\overset{d}{=}\Big(X_0\prod\limits_{i=1}^n R_i(t_i)\Big)_{n\geq 1}~,}
	for some independent random variables $(R_i(t))_{\{t\in\{\Old,\Young\},n\in\N\}}$.
\end{Lemma}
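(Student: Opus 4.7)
The plan is to exploit the scale-invariance of the truncated kernel $\kappa_b$ together with the multiplicative form of the eigenfunction $h(x,s)=\bfu^b_s/\sqrt{x}$. These two structures together force the conditional law of $R_i:=X_i/X_{i-1}$, given $(X_{i-1},t_{i-1})$ and $t_i$, to depend only on $t_i$. From \eqref{eq:mu:def}, the joint density of $((X_i,t_i))_{i=0}^n$ under $\mu$ factorises as a product of one-step transitions $\rho_b^{-1}\pi\,\kappa_b((x_{i-1},t_{i-1}),(x_i,t_i))\,h(x_i,t_i)/h(x_{i-1},t_{i-1})$, so it suffices to analyse the single-step conditional ratio distribution.

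Second, I would carry out the one-step computation. For $t_i=\Old$, the kernel \eqref{eq:kernel:offspring-operator:PPT} is $c_{t_{i-1}\old}/(x_{i-1}^{\chi}y^{1-\chi})$ supported on $y\in(0,x_{i-1})$, and multiplying by $h(y,\Old)/h(x_{i-1},t_{i-1})$ yields a density proportional to $y^{\chi-3/2}$ on $(0,x_{i-1})$. Since $\chi>1/2$ for $\delta>0$, this normalises, and the change of variable $r=y/x_{i-1}$ eliminates all $x_{i-1}$-dependence, leaving the density $(\chi-1/2)\,r^{\chi-3/2}$ on $(0,1)$. An analogous computation for $t_i=\Young$, where the truncation forces $y\in(x_{i-1},b\,x_{i-1}]$, produces a ratio density proportional to $r^{-\chi-1/2}$ on $(1,b]$. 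In both cases, the conditional ratio law depends on the path only through $t_i$, as claimed.

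Consequently, sampling $((X_i,t_i))_{i\geq 0}$ under $\mu$ is equivalent to first drawing the stationary label Markov chain $(t_i)_{i\geq 0}$ of Lemma~\ref{lem:MC:label}, and then, independently of the labels, drawing for every $i\in\N$ and every $t\in\{\Old,\Young\}$ a random variable $R_i(t)$ with the density identified above, with the whole family $\{R_i(t)\}_{i,t}$ mutually independent; the identity $X_n\overset{d}{=}X_0\prod_{i=1}^n R_i(t_i)$ then follows. The main obstacle is pure bookkeeping: one has to check that integrating $y$ out in the one-step transition, given $t_i$, produces a normalising constant equal to exactly $\rho_b\,p_{t_{i-1},t_i}$ with $p_{s,t}$ as in \eqref{eq:transition-prob:MC}, so that \eqref{eq:mu:def} splits cleanly into the label chain times the scale-invariant ratio factors. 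This is forced by $\bar{\bfT}_{\kappa_b}h=r(\bar{\bfT}_{\kappa_b})h$, but has to be spelled out to justify the conditional independence of the $R_i$ across $i$.
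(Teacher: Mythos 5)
Your proof is correct and follows essentially the same route as the paper: you exploit the scale-invariance of $\kappa_b$ together with the power-law form of $h$ to show that the conditional law of $X_n/X_{n-1}$ given $(X_{n-1},t_{n-1},t_n)$ depends only on $t_n$, and then iterate. The only cosmetic difference is that you phrase the one-step computation in terms of densities (obtaining $r^{\chi-3/2}$ on $(0,1)$ for $\Old$ and $r^{-\chi-1/2}$ on $(1,b]$ for $\Young$), whereas the paper computes the corresponding tail probabilities directly, which are the integrals of precisely these densities.
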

To prove Lemma~\ref{lem:multiplicative-property}, we compute the explicit distribution of the independent random variables, which also will be helpful for the last step of the proof for Proposition~\ref{prop:supercriticality}. First we calculate the distribution of ${X_n}/{X_{n-1}}$ conditionally on $X_{n-1},t_{n-1},t_n$ which turns out to be independent of $\big(X_{n-1},t_{n-1}\big)$. This proves Lemma~\ref{lem:multiplicative-property}.
\begin{proof}[Proof of Lemma~\ref{lem:multiplicative-property}]
	To prove the lemma, we calculate the joint distribution of ${X_n}/{X_{n-1}},t_n$ conditionally on $X_{n-1},t_{n-1}$. We compute for $a\in(0,1)$,
	\eqan{\label{for:lem:multiplicative-property:1}
	\Qbb\Big( \frac{X_n}{X_{n-1}}\geq a,t_n=\Old\mid X_{n-1}=x,t_{n-1} \Big)=&\rho_b^{-1}\int\limits_{ax}^x \pi\kappa_b((x,t_{n-1}),(y,\Old))\frac{h(y,\Old)}{h(x,t_{n-1})}\,dy\nn\\
	=&\Qbb(t_n=\Old\mid t_{n-1})(1-a^{\chi-\frac{1}{2}})~.
	}
	Similarly, for $a\in(1,b)$,
	\eqan{\label{for:lem:multiplicative-property:2}
		\Qbb\Big( \frac{X_n}{X_{n-1}}\geq a,t_n=\Young\mid X_{n-1}=x,t_{n-1} \Big)
		=&\rho_b^{-1}\int\limits_{ax}^{bx} \pi\kappa_b((x,t_{n-1}),(y,\Young))\frac{h(y,\Young)}{h(x,t_{n-1})}\,dy\nn\\
		=&\Qbb(t_n=\Young\mid t_{n-1})\frac{a^{\frac{1}{2}-\chi}-b^{\frac{1}{2}-\chi}}{1-b^{\frac{1}{2}-\chi}}~.
	}
	Therefore, by \eqref{for:lem:multiplicative-property:1} and \eqref{for:lem:multiplicative-property:2}, the tail distribution function of ${X_n}/{X_{n-1}}$ conditionally on $(X_{n-1},t_{n-1},t_n)$ does not depend on $(X_{n-1},t_{n-1})$ and is given by
	\eqan{\label{for:lem:multiplicative-property:3}
	\Qbb\Big( \frac{X_n}{X_{n-1}}\geq a\mid t_n=\Old,X_{n-1}=x,t_{n-1} \Big)&=\Qbb\Big( \frac{X_n}{X_{n-1}}\geq a\mid t_n=\Old \Big)\nn\\
	&\quad=1-a^{\chi-\frac{1}{2}},~\text{for }a\leq1~,\\
	\Qbb\Big( \frac{X_n}{X_{n-1}}\geq a\mid t_n=\Young,X_{n-1}=x,t_{n-1} \Big)&=\Qbb\Big( \frac{X_n}{X_{n-1}}\geq a\mid t_n=\Young \Big)\nn\\
	&\quad=\frac{a^{\frac{1}{2}-\chi}-b^{\frac{1}{2}-\chi}}{1-b^{\frac{1}{2}-\chi}}~\text{for }a\in(1,b)~.\nn
	}
	By \eqref{for:lem:multiplicative-property:3}, we can write $X_n$ in a multiplicative way as
	\eqn{\label{for:lem:multiplicative-property:4}
	X_n\overset{d}{=}X_{n-1}R_n(t_n)~,}
	where $R_n(t_n)$ has the distribution function defined in \eqref{for:lem:multiplicative-property:3} depending on $t_n$ being $\Old$ or $\Young$. Now iterating the same argument, we can write $X_n$ conditionally on $(t_0,\ldots,t_n)$ as
	\[
		X_n\overset{d}{=}X_0\prod\limits_{i=1}^n R_i(t_i)~,
	\]
	where $\big(R_i(t_i)\big)_{i\geq 1}$ are (conditionally) independent random variables. Hence, $X_n$ has the desired multiplicative structure.
\end{proof}
Now we have all the required tools to prove Proposition~\ref{prop:supercriticality}. Before giving the details we give an outline of the remaining proof. We prove that $\Qbb$-almost surely, $\mu$-almost every path has the property that
\eqn{\label{for:prop:supercriticality:1}
\lim\limits_{n\to\infty}\frac{\log(X_n)}{n}= c<0~.}
This implies that $\Qbb$-almost surely, the log values of the ages of nodes in an infinite path converge to $-\infty$. Therefore, there exists an ancestral line of nodes whose age converges to $0$. For the $b-\PPT$ rooted at some $(x,s)\in\Scal$, it will follow that there exists an ancestral line of particles whose ages converge to $0$ with positive probability. Therefore, $b-\PPT$ survives the percolation with positive probability.
\begin{proof}[Proof of Proposition~\ref{prop:supercriticality}]
	For proving \eqref{for:prop:supercriticality:1}, we use Lemma~\ref{lem:multiplicative-property} as
	\eqn{\label{for:prop:supercriticality:2}
	\frac{\log(X_n)}{n}\overset{d}{=}\frac{\log(X_0)+\sum\limits_{i=1}^{n_\old} \log(R_i(\Old))+\sum\limits_{j=1}^{n_\young} \log(R_j(\Young))}{n}~,}
	where $n_\old$ and $n_\young$ are the number of $\Old$ and $\Young$ labels in $\{t_1,t_2,\ldots,t_n\}$ respectively. Since $R_i(\Old)$ and $R_j(\Young)$ are independent random variables, and $\{t_0,t_1,\ldots\}$ is a stationary Markov chain with limiting distribution ${\bf{\upsilon}}^b=({\bf{\upsilon}}_\old^b,{\bf{\upsilon}}_\young^b)$, by the strong law of large numbers
	\eqan{\label{for:prop:supercriticality:3}
	\frac{\log(X_n)}{n}\overset{a.s.}{\to}{\bf{\upsilon}}_\old^b \E[\log(R_1(\Old))]+{\bf{\upsilon}}_\young^b \E[\log(R_1(\Young))]~,}
	where ${\bf{\upsilon}}^b=({\bf{\upsilon}}_\old^b,{\bf{\upsilon}}_\young^b)$ is the stationary distribution of $\{t_0,t_1,\ldots\}$. By \eqref{for:lem:multiplicative-property:3}, we compute
	\eqan{
		&\E[\log(R_1(\Old))]=-\frac{1}{\chi-\frac{1}{2}},\label{for:prop:supercriticality:4-1}\\
		\text{and}\qquad&\E[\log(R_1(\Young))]=\frac{1}{1-b^{1/2-\chi}}\Big[ \frac{1-b^{1/2-\chi}}{\chi-\frac{1}{2}}-b^{1/2-\chi}\log(b) \Big]~.\label{for:prop:supercriticality:4-2}
	}
	Next we show that ${\bf{\upsilon}}_\old^b>{\bf{\upsilon}}_\young^b$. For proving this, we make use of a small computation that holds true in general for a $2$-state Markov process.
	Let 
	\eqn{\label{for:clarity:matrix-1}
	A=\begin{bmatrix}
		e_1 & 1-e_1\\
		e_2 & 1-e_2
	\end{bmatrix}}
	be a transition matrix and $\theta=(\theta_1,\theta_2)$ be its stationary distribution. Then,
	\eqn{\label{for:clarity:matrix-2}
	\frac{\theta_1}{\theta_2}=\frac{e_2}{1-e_1}~.}
	Therefore, necessary and sufficient condition for $\theta_1>\theta_2$ is $e_1+e_2>1$.
	Hence, ${\bf{\upsilon}}_\old^b>{\bf{\upsilon}}_\young^b$ holds if and only if $p_{\old\old}+p_{\young\old}>1$ holds. Substituting the values of $p_{\old\old}$ and $p_{\young\old}$ from \eqref{eq:transition-prob:MC}, we have
	\eqn{\label{for:prop:supercriticality:5}
	p_{\old\old}+p_{\young\old}=\frac{c_{\old\old}}{\lambda_{\sss \bfM}^{(b)}}+\frac{c_{\young\old}\bfu_{\old}^b}{\lambda_{\sss \bfM}^{(b)}\bfu_{\young}^b}=\frac{c_{\young\young}}{\lambda_{\sss \bfM}^{(b)}}+\frac{c_{\young\old}\bfu_{\old}^b}{\lambda_{\sss \bfM}^{(b)}\bfu_{\young}^b}>p_{\young\young}+p_{\young\old}=1~.}
	Therefore, by \eqref{for:prop:supercriticality:3}-\eqref{for:prop:supercriticality:5}, we have that ${\log(X_n)}/{n}\to c$ for some $c<0$.
	For $b\to\infty,~\upsilon^{b}\to(1/2,1/2)$ and the second factor in the RHS of \eqref{for:prop:supercriticality:4-2} also converges to $0$, which makes the RHS of \eqref{for:prop:supercriticality:3}, $0$.
	As a result, $\Qbb_{(x,s)}$ almost surely, the ages of the nodes in $\PPT$ converges to $0.~\Qbb_{(x,s)}$ is a tilted measure of $\prob_{(x,s)}$ with the martingale limit of $M_b^{(n)}(x,s)$ being the Radon-Nikodym derivative. Since $M_b^{(n)}(x,s)$ has positive limit with positive probability, age of the nodes in the $n$-th generation of the $b-\PPT$ converges to $0$ with positive probability. Since $b-\PPT$ is a truncated $\PPT,~\PPT$ also survives with positive probability.
\end{proof}
Now, we remain to prove the Kesten-Stigum theorem for the $b-\PPT$ of Theorem~\ref{K-S-multi-type}. First, we prove Theorem~\ref{K-S-multi-type} and then prove Lemma~\ref{thm:finite:higher-moment} to satisfy the condition for Theorem~\ref{K-S-multi-type}. 

\begin{proof}[Proof of Theorem~\ref{K-S-multi-type}]
We prove this theorem following the same line of proof done by Athreya in \cite{A2000}, adapted for our martingale as done in \cite{KS01}. For the sake of simplicity of the notations we use $W_n(y)$ for $y\in\Scal_e$, in the place of $M_b^{(n)}(x,s)$ for $(x,s)\in\Scal_e$ and $W(y)$ as the $n\to\infty$ limit of $W_n(y)$.
	
Let $\Mcal\equiv\{ \mu:\mu(\cdot)=\sum_{i=1}^n \delta_{x_i}(\cdot)~\text{for some }n<\infty,~x_1,x_2,\ldots,x_n\in\Scal_e \}$ where $\delta_x(\cdot)$ is the delta measure at $x$, i.e., $\delta_x(A)=1$ if $x\in A$ and $0$ if $x\notin A$. Clearly, $\Mcal$ is closed under addition.
Let $z_n$ denote the number of particles in the $n$-th generation of the tree, and for any $x\in\Scal_e,$ let $\xi^x$ denote the empirical measures of the children of $x$ in $\Scal_e$. Let $\{x_{ni}:i\in [z_n]\}$ denote the particles in the tree in $n$-th generation of the tree. Then adapting the notions of \cite{A2000}, we define $Z_n$ as
\eqn{\label{eq:K-S:1}
		Z_n=\sum\limits_{i=1}^{z_n} \xi^{x_{ni}}~,}
where $\xi^{x_{ni}}$ are independent $\Mcal$-valued random variable, and denote $V(\mu)$ as
\[
	V(\mu)=\int h\,d\mu~,
\]
where $h$ is the eigenfunction defined in \eqref{eq:lemma:spectral-radius:efunc}. 
	For any initial value $z\in \Mcal,$
	\eqn{\label{eq:K-S:02}
		\Qbb(z,\,d\mu)=\frac{V(\mu)\prob(z,\,d\mu)}{\rho_b V(z)}~.} 
	Choosing $z=\delta_{x_0},$ we obtain the same $\Qbb_{x_0}$ measure as defined in \eqref{def:tilted-measure} for some $x_0\in\Scal_e$.
	From Corollary~$1$ of \cite{A2000}, for $x_0\in\Scal_e$,
	\eqn{\label{eq:K-S:021}
		\E_{x_0}[W(x_0)]=1~\text{under}~\prob_{x_0},~\text{if and only if}~\Qbb_{x_0}(W(x_0)=\infty)=0~.}
	The random variable $\tilde{\xi}^x$ has a size-biased distribution of $\xi^x$ in that
	\eqn{\label{eq:K-S:2}
		\prob\big( \tilde{\xi}^x\in \,dm \big)=\frac{V(m)\prob\big( {\xi}^x\in \,dm \big)}{\rho_b h(x)}~.}
	Under $\Qbb$ measure, $b-\PPT$ has an infinite line of nodes. This infinite line of nodes is termed as the `spine' of the tree. Offspring distribution of the nodes of spine is the size-biased distribution defined in \eqref{eq:K-S:2}. Rest of the nodes in the tree has usual offspring distribution. Following the calculation in \cite[page~329-330]{A2000}, it can be shown that the markov chain $(Z_n)_{n=0}^\infty$ with transition function $\Qbb$ defined in \eqref{eq:K-S:02}, evolves in the following manner. Given $Z_n=(x_{n1},x_{n2},\ldots,x_{nz_n}),~Z_{n+1}$ is generated as follows:
	\begin{enumerate}
		\item Choose individual $x_{ni}$ with probability $\frac{h(x_{ni})\one_{[x_{ni}\sim x_{(n-1)j^\star}]}}{V(\xi^{x_{(n-1)j^\star}})}$ and name it $x_{nj^\star}$;
		\vspace{3pt}
		\item Choose its offspring $\tilde{\xi}^{x_{nj^\star}}$ according to the one in \eqref{eq:K-S:2};
		\vspace{3pt} 
		\item For all other individuals in the generation, choose the offspring process $\xi^{x_{ni}}$ according to the original probability distribution $\prob(\xi^{x_{ni}}\in\,dm)$;
		\vspace{3pt}
		\item $\tilde{\xi}^{x_{nj^\star}}$ and $\xi^{x_{ni}}$ are all chosen independently for $i\neq j^\star$;
		\vspace{3pt}
		\item Set $Z_{n+1}=\tilde{\xi}^{x_{nj^\star}}+\sum\limits_{i\neq j^\star}\xi^{x_{ni}}~.$
	\end{enumerate}
	Therefore, $\{x_{nj^\star},n=1,2,\ldots\}$ keeps track of the `spine' component of the tree. Next we follow the line of proof in \cite[equation~$(15),(19a),(19b)$]{A2000} with the adapted martingale $W_n(x_0)$ to obtain
	\eqan{\label{eq:K-S:3}
		\E_{\Qbb}\Big[ W_{n+1}(x_0)\mid \tilde{\xi}^{x_{1j^\star}},\tilde{\xi}^{x_{2j^\star}},\ldots \Big] \leq& 1+\sum\limits_{r=0}^\infty \frac{V(\tilde{\xi}^{x_{rj^\star}})}{\rho_b^{r+1}h(x_0)}\nonumber\\
		=&1+\sum\limits_{r=0}^\infty \left(\frac{V(\tilde{\xi}^{x_{rj^\star}})}{\rho_b^{r+1}h(x_{rj^\star})}\right)\left(\frac{h(x_{rj^\star})}{h(x_0)}\right)\equiv W^\star(x_0)~.}
	Now we prove that there exists $\omega\in(0,1)$ such that $$\Dcal_r=\left\{\left(\frac{V(\tilde{\xi}^{x_{rj^\star}})}{\rho_b^{r+1}h(x_{rj^\star})}\right)\left(\frac{h(x_{rj^\star})}{h(x_0)}\right)>\omega^r\right\}$$
	happens at most finitely often, which in turn proves that $W^\star(x_0)$ is finite $\Qbb$ almost surely.	
	For proving that $\Dcal_r$ happens finitely often, define the following:
	\eqan{
		\Bcal_r=&\left\{ \frac{V(\tilde{\xi}^{x_{rj^\star}})}{\rho_b h(x_{rj^\star})}>(\rho_b\omega)^{r(1-\alpha)}  \right\}~,\label{eq:K-S:4}\\
		\text{and}\qquad\Ccal_r=& \left\{ \frac{h(x_{rj^\star})}{h(x_0)}>(\rho_b\omega)^{r\alpha} \right\}~,\label{eq:K-S:5}}
	for some $\alpha\in(0,1)$. Then by the union bound,
	\eqn{\label{eq:K-S:6}\Qbb(\Dcal_r~\text{i.o.})\leq \Qbb(\Bcal_r~\text{i.o.})+\Qbb(\Ccal_r~\text{i.o.})~.}
	The author has proved in \cite{A2000} that a sufficient condition to establish $\Qbb(\Bcal_r~\text{i.o.})=0$ is the finiteness of $\int\limits_1^\infty \bar{f}(\e^t)\,dt$, where 
	\eqn{\label{eq:K-S:07}
		\bar{f}(t)\equiv\sup\limits_x\prob\left(\frac{V(\tilde{\xi}^x)}{\rho_b h(x)}>t\right)~.}
	Although the proof in \cite{A2000} is there for a different denominator, but the argument here still follows exactly the same way as in \cite{A2000}.
	The following claim provides a sufficient condition for the finiteness of $\int\limits_1^\infty \bar{f}(\e^t)\,dt$:
	\begin{Claim}[A sufficient condition]\label{lem:equivalent:condition}
		$\int\limits_1^\infty \bar{f}(\e^t)\,dt$ is finite if for some $\eta>0$,
		\eqn{\label{eq:lem:equivalence}
			\sup\limits_{(x,s)\in\Scal_e}\E\Big[ M_b^{(1)}(x,s)^{1+\eta} \Big]<\infty~,}
		where $f$ is as defined in \eqref{eq:K-S:07}.
	\end{Claim}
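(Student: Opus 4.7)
The plan is to unwind the size-biasing in the definition of $\tilde\xi^x$ so that the tail probability appearing in $\bar f$ becomes a truncated moment of $M_b^{(1)}(x,s)$, and then apply the standard Markov-type estimate using the assumed $(1+\eta)$-moment bound.

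First I would identify the random variable $V(\xi^x)/(\rho_b h(x))$ (under the original law $\prob$) with $M_b^{(1)}(x,s)$, where $s$ is the type coordinate of $x$. Indeed, $V(\xi^x)=\int h\,d\xi^x=\sum_{y\in\xi^x} h(y)$ is precisely the numerator appearing in the definition \eqref{eq:def:martingale} of $M_b^{(n)}$ at $n=1$. Writing $Y:=V(\xi^x)/(\rho_b h(x))=M_b^{(1)}(x,s)$, the size-biasing relation \eqref{eq:K-S:2} together with the transformation formula gives, for any measurable $g\geq 0$,
\[
\E\!\left[g\!\left(\tfrac{V(\tilde\xi^x)}{\rho_b h(x)}\right)\right]
=\int g\!\left(\tfrac{V(m)}{\rho_b h(x)}\right)\tfrac{V(m)\,\prob(\xi^x\in dm)}{\rho_b h(x)}
=\E\bigl[Y\,g(Y)\bigr].
\]
Specialising to the indicator $g(y)=\one_{\{y>t\}}$, this yields the key identity
\[
\prob\!\left(\tfrac{V(\tilde\xi^x)}{\rho_b h(x)}>t\right)=\E\bigl[Y\,\one_{\{Y>t\}}\bigr].
\]

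Next I would bound the truncated moment by the full $(1+\eta)$-th moment via $\E[Y\one_{\{Y>t\}}]\leq t^{-\eta}\E[Y^{1+\eta}]$ (since $y\one_{\{y>t\}}\leq y^{1+\eta}/t^\eta$). Taking the supremum over $x\in\Scal_e$ and invoking the hypothesis \eqref{eq:lem:equivalence} gives a constant $C=\sup_{(x,s)\in\Scal_e}\E[M_b^{(1)}(x,s)^{1+\eta}]<\infty$ such that
\[
\bar f(t)\;=\;\sup_x \prob\!\left(\tfrac{V(\tilde\xi^x)}{\rho_b h(x)}>t\right)\;\leq\; \frac{C}{t^{\eta}}.
\]

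Finally, substituting $t\mapsto e^u$ I obtain
\[
\int_1^\infty \bar f(e^u)\,du \;\leq\; C\int_1^\infty e^{-\eta u}\,du \;=\; \frac{C}{\eta}\,e^{-\eta}\;<\;\infty,
\]
which is the desired conclusion. The only subtle point is the first step, namely recognising that the size-biased tail is exactly a truncated moment of $M_b^{(1)}$; after that the argument is just Markov's inequality and an elementary integral, so I do not anticipate any real obstacle.
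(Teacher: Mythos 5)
Your proof is correct and essentially the same as the paper's: both combine the size-biasing identity $\E[g(\tilde Y)] = \E[Y g(Y)]$ for $Y = V(\xi^x)/(\rho_b h(x)) = M_b^{(1)}(x,s)$ with a Markov-type bound to obtain $\bar f(\e^t) \leq \e^{-\eta t}\sup_x \E[M_b^{(1)}(x,s)^{1+\eta}]$, then integrate. The paper applies Markov's inequality to $\tilde Y^\eta$ first and then converts $\E[\tilde Y^\eta]$ into $\E[Y^{1+\eta}]$ via size-biasing, whereas you unwind the size-biasing first to write the tail as a truncated moment $\E[Y\one_{\{Y>t\}}]$ and then bound that; the two orderings are interchangeable and yield the identical estimate.
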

	We defer proof of this claim to the appendix~\ref{app:sufficiency}.
	By Lemma~\ref{lem:equivalent:condition}, finiteness of  $\sup\limits_x\E\left[ \left(\frac{V({\xi}^x)}{\rho_b h(x)}\right)^{1+\eta}\right]$ implies finiteness of $\int\limits_1^\infty \bar{f}(\e^t)\,dt$, and Lemma~\ref{thm:finite:higher-moment} provides us with the uniform bound on the $1+\eta$-th moment of $W_1(y)$. Therefore, 
	\eqn{\label{eq:K-S:7}
		\Qbb(\Bcal_r~\text{i.o.})=0~.}
	For the analysis of $\Ccal_r$, we use the properties of $b-\PPT$ under $\Qbb$, more specifically the proof of Proposition~\ref{prop:supercriticality}.
	Let $x_{rj^\star}$ has label $t_r\in\{\Old,\Young\}$ and age $a_{rj^\star}\in (0,\infty)$. Therefore, $\log h(x_{rj^\star})=\log(\bfu_{t_r}^b)+\log(a_{rj^\star})/2$, where $\bfu^b=(\bfu_\old^b,\bfu_\young^b)$ is as defined in Lemma~\ref{lem:spectral-norm:truncated-operator}.
	By \eqref{for:prop:supercriticality:2}-\eqref{for:prop:supercriticality:4-2}, we can prove that for any fixed $b>0$,
	\eqn{\label{eq:K-S:8}
		\frac{\log(h(x_{rj^\star}))-\log h(x_0)}{r}\overset{a.s.}{\to} \frac{{\bf{\upsilon}}_\old^b-{\bf{\upsilon}}_\young^b}{2\chi-1} + \frac{b^{1/2-\chi}\log(b){\bf{\upsilon}}_\young^b}{2(1-b^{1/2-\chi})}~.}
	Now using \eqref{for:clarity:matrix-1}-\eqref{for:clarity:matrix-2}, the stationary measure ${\bf{\upsilon}}^b=({\bf{\upsilon}}_\old^b,{\bf{\upsilon}}_\young^b)$ can be explicitly calculated as
	\[
	{\bf{\upsilon}}_\old^b=\frac{\frac{c_{\young\old}\bfu_{\old}^b}{\lambda_{\bfM}^{\sss(b)}\bfu_{\young}^b}}{\frac{c_{\young\old}\bfu_{\old}^b}{\lambda_{\bfM}^{\sss(b)}\bfu_{\young}^b}+\frac{c_{\old\young}(1-b^{1/2-\chi})\bfu_{\young}^b}{\lambda_{\bfM}^{\sss(b)}\bfu_{\old}^b}}\quad\text{and}\quad {\bf{\upsilon}}_\young^b=1-{\bf{\upsilon}}_\old^b~,
	\]
	where $\lambda_{\bfM}^{(b)}$ is as defined in \eqref{for:lem:spectra:truncated-operator:2} and $\bfu^b=(\bfu_\old^b,\bfu_{\young}^b)$ is the right eigenvector of $\bfM_b$ defined in Lemma~\ref{lem:spectral-norm:truncated-operator}. For any $b>0~,\bfu^b$ can be explicitly computed as
	\[
	\bfu_{\old}^b=\frac{c_{\old\young}(1-b^{1/2-\chi})}{\lambda_{\bfM}^{(b)}+c_{\old\young}(1-b^{1/2-\chi})-c_{\old\old}}\quad\text{and}\quad\bfu_{\young}^b=1-\bfu_{\old}^b~.
	\]
	Note that all of  $\{\bfu_\old^b,\bfu_{\young}^b,{\bf{\upsilon}}_\old^b,{\bf{\upsilon}}_\young^b\}$ are continuous in $b,$ and as $b\to \infty,~{\bf{\upsilon}}^b\to (1/2,1/2)$, while the RHS of \eqref{eq:K-S:8} decreases to $0$. 
	
	On the other hand, as $b\to\infty,~\rho_b$ increases to $\rho=\pi r(\bfT_{\kappa}),$ which is strictly greater than $1$. Therefore, there exists a $b$ large enough and suitable $\omega\in(1/\rho,1)$ such that $\log(\rho_b\omega)>\log(\rho\omega)/2$.
	Similarly, for $b$ large enough, RHS of \eqref{eq:K-S:8} can be shown to be less than $\log(\rho\omega)/4$.
	Hence $\Ccal_r$ occurs finitely often, and by \eqref{eq:K-S:6}, $\Qbb(\Dcal_r~\text{i.o.})=0$ completes the argument that $\Dcal_r$ happens finitely often $\Qbb_{x_0}$ almost surely.	
	Following the argument in \cite[Proof of Theorem~3]{A2000}, we prove 
	\eqn{\label{eq:K-S:04}
		\Qbb_{x_0}(W(x_0)<\infty)=1\quad\text{or}\quad\Qbb_{x_0}(W(x_0)=\infty)=0~.}
	Therefore, by \eqref{eq:K-S:021}, we conclude
	\eqn{\label{eq:K-S:041}
		\E_{x_0}[W(x_0)]=1\quad\text{under}\quad\prob_{x_0}~.}
	Hence, $W(x_0)$ is non-zero random variable $\prob_{x_0}$ almost surely. 
\end{proof}

Now we remain to prove Lemma~\ref{thm:finite:higher-moment} to satisfy the sufficiency condition in Lemma~\ref{lem:equivalent:condition}. For any $(x,s)$ in the $b-\PPT$, we assign two bins $B_\old(x,s),B_\young(x,s)\subset [0,bx]\times\{\Old,\Young\}$ such that all the $\Old$ labeled children of $(x,s)$ are in $B_\old(x,s)$. Similarly, all the $\Young$ labeled children of $(x,s)$ having age in $[x,bx]$ are in $B_\young(x,s)$. From the construction of the P\'olya point tree, observe that only finitely many offspring of $(x,s)$ are in $B_t(x,s)$ for both $t\in\{\Old,\Young\}$. Let $n_t$ denote the (possibly random) number of offspring of $(x,s)$ in bin $B_t(x,s),$ and $(A_{ti})_{i\in[n_t]}$ denote the types of offspring of $(x,s)$ in $B_t(x,s)$ for $t\in\{\Old,\Young\}$.

We write $M^{(1)}_b(x,s)$ as
\eqn{\label{eqn:martingale:independent-rv}
	M^{(1)}_b (x,s)= \frac{1}{\rho_b^n}\sum\limits_{t\in\{\old,\young\}}\sum\limits_{i=1}^{n_t} \frac{h(A_{ti})}{h(x,s)}~.}
This representation helps us to prove Lemma~\ref{thm:finite:higher-moment}, proving the finiteness of $(1+\eta)$-th moment of $M^{(1)}_b (x,s)$ for some $\eta>0$:
\begin{proof}[Proof of Lemma~\ref{thm:finite:higher-moment}]
	Fix any $(x,s)\in\Scal$. Then using \eqref{eqn:martingale:independent-rv}, $\big(M^{(1)}_b(x,s)\big)^{1+\eta}$ can be rewritten as
	\eqan{\label{for:lem:finite:higher-moment:1}
		\big(M^{(1)}_b(x,s)\big)^{1+\eta}
		=& (\rho_b h(x,s))^{-(1+\eta)} \Big( \sum\limits_{t\in\{\old,\young\}}\sum\limits_{i=1}^{n_t} {h(A_{ti})}\Big)^{1+\eta}\nn\\
		=&(\rho_b h(x,s))^{-(1+\eta)} \sum\limits_{t\in\{\old,\young\}}\sum\limits_{i=1}^{n_t} h(A_{ti}) \Big(\sum\limits_{l\in\{\old,\young\}}\sum\limits_{j=1}^{n_l} h(A_{lj})\Big)^\eta\\
		\leq&(\rho_b h(x,s))^{-(1+\eta)} \sum\limits_{t\in\{\old,\young\}}\sum\limits_{i=1}^{n_t} h(A_{ti})\Big[ \Big( \sum\limits_{j=1}^{n_\old} h(A_{\old j}) \Big)^\eta + \Big( \sum\limits_{j=1}^{n_\young} h(A_{\young j}) \Big)^\eta \Big]\nonumber~.
		}
	The inequality in \eqref{for:lem:finite:higher-moment:1} follows from the fact that $(a+b)^\eta\leq a^\eta+b^\eta$ for all $\eta<1$ and $a,b>0$.
	Since $\rho_b$ and $h(x,s)$ are constants, we leave out those terms in \eqref{for:lem:finite:higher-moment:1} for the time being and concentrate on the random part. More precisely, instead of looking at $\E\Big[ \big(M^{(1)}_b(x,s)\big)^{1+\eta} \Big]$, we upper bound
	\eqan{\label{for:lem:finite:higher-moment:2}
	&(\rho_b h(x,s))^{1+\eta}\E\Big[ \big(M^{(1)}_b(x,s)\big)^{1+\eta} \Big]\nn\\
	\leq& ~\E\Big[\sum\limits_{t\in\{\old,\young\}}\sum\limits_{i=1}^{n_t} h(A_{ti})\Big(\sum\limits_{i=1}^{n_t} h(A_{ti})\Big)^{\eta}\Big]+\E\Big[\sum\limits_{i=1}^{n_\old} h(A_{\old i})\Big]\E\Big[ \Big( \sum\limits_{j=1}^{n_\young} h(A_{\young j}) \Big)^\eta \Big]\nn\\
	&\hspace{5cm}+\E\Big[\sum\limits_{i=1}^{n_\young} h(A_{\young i})\Big]\E\Big[ \Big( \sum\limits_{j=1}^{n_\old} h(A_{\old j}) \Big)^\eta \Big]~.
	}
	By the construction of the $b-\PPT$, the ages of the $\Old$ and $\Young$ labeled children of $(x,s)$ are independent. Therefore, the expectations in the second and third term of the RHS of \eqref{for:lem:finite:higher-moment:2} splits. We deal with the three sums in \eqref{for:lem:finite:higher-moment:2} separately.
	\paragraph{Upper bounding the first sum in \eqref{for:lem:finite:higher-moment:2}.} 
	First we use a similar inequality, splitting between $j=i$ and $j\neq i$ to get 
	\eqan{\label{for:lem:finite:higher-moment:02}
		&\E\Big[ \sum\limits_{t\in\{\old,\young\}}\sum\limits_{i=1}^{n_t} h(A_{ti})\Big( \sum\limits_{j=1}^{n_t} h(A_{tj}) \Big)^\eta  \Big]\nn\\
		\leq& \E\Big[ \sum\limits_{t\in\{\old,\young\}} \sum\limits_{i=1}^{n_t} h(A_{ti})^{1+\eta}\Big] + \sum\limits_{t\in\{\old,\young\}}\E\Big[ \sum\limits_{i=1}^{n_t} h(A_{ti})\Big( \sum\limits_{\substack{j=1\\j\neq i}}^{n_t} h(A_{tj}) \Big)^\eta   \Big]~.}
	Note that $(A_{\old i})_{i\in[n_\old]}$ are independent by the construction of the P\'olya point tree. Therefore, for $k=\Old$, the second expectation in \eqref{for:lem:finite:higher-moment:02} can be simplified as
	\eqan{\label{for:lem:finite:higher-moment:03}
		\E\Big[ \sum\limits_{i=1}^{n_\old} h(A_{\old i})\Big( \sum\limits_{\substack{j=1\\j\neq i}}^{n_\old} h(A_{\old j}) \Big)^\eta   \Big] =& \sum\limits_{i=1}^{n_\old}\E\Big[  h(A_{\old i})\Big]\E \Big[\Big( \sum\limits_{\substack{j=1\\j\neq i}}^{n_\old} h(A_{\old j}) \Big)^\eta   \Big]  \nn\\
		\leq&\E\Big[ \sum\limits_{i=1}^{n_\old} h(A_{\old i})\Big]\E \Big[\Big( \sum\limits_{j=1}^{n_\old}h(A_{\old j}) \Big)^\eta   \Big] ~.
	}
	Since $0<\eta<1$, we use Jensen's inequality for concave functions to obtain
	\eqn{\label{for:lem:finite:higher-moment:04}
		\E\Big[ \sum\limits_{i=1}^{n_\old} h(A_{\old i})\Big( \sum\limits_{\substack{j=1\\j\neq i}}^{n_\old} h(A_{\old j}) \Big)^\eta   \Big] \leq \E\Big[ \sum\limits_{i=1}^{n_\old} h(A_{\old i})\Big]^{1+\eta}~.}
	All of $(A_{\young i})_{i\in[n_\young]}$ have label $\Young,$ and the ages are the occurrence times given by a mixed-Poisson process on $[x,bx]$ with random intensity $\rho_{(x,s)}(y)$ as defined in \eqref{for:pointgraph:poisson}. Therefore, $n_\young$ is a mixed-Poisson random variable with intensity $\Gamma_{(x,s)}(b^{1-\chi}-1)$. Conditionally on $n_\young,$ the ages of $ (A_{\young i})_{i\in[n_\young]}$ are i.i.d.\ random variables independent of $n_\young$. Let $A_\young$ denote the distribution of $A_{\young i}$ conditionally on $n_\young$. Then, for $t=\Young$, the second expectation in \eqref{for:lem:finite:higher-moment:02} can be simplified as
	\eqan{\label{for:lem:finite:higher-moment:05}
		\E\Big[ \sum\limits_{i=1}^{n_\young} h(A_{\young i})\Big( \sum\limits_{\substack{j=1\\j\neq i}}^{n_\young} h(A_{\young j}) \Big)^\eta   \Big]=&\E\Big[ \sum\limits_{i=1}^{n_\young}\E_{n_\young}\Big[h(A_{\young i})\Big( \sum\limits_{\substack{j=1\\j\neq i}}^{n_\young} h(A_{\young j}) \Big)^\eta \Big]\Big]\nn\\
		=&  \E\Big[ n_\young \E_{n_\young}[h(A_{\young 1})]\E_{n_\young}\Big[ \Big( \sum\limits_{j=1}^{n_\young -1} h(A_{\young j}) \Big)^\eta\Big]\Big]~,}
	where $\E_{n_\young}[\cdot]=\E[\cdot\mid{n_\young}]$. Again by Jensen's inequality for the concave function $x\mapsto x^\eta$, we bound \eqref{for:lem:finite:higher-moment:05} by
	\eqan{\label{for:lem:finite:higher-moment:06}
		\E\Big[ n_\young \E_{n_\young}[h(A_{\young1})]\E_{n_\young}\Big[ \Big( \sum\limits_{j=1}^{n_\young-1} h(A_{\young j}) \Big)^\eta\Big]\Big]\leq& \E\Big[ n_\young \E[h(A_{\young})]\Big( \sum\limits_{j=1}^{n_\young-1} \E_{n_\young}[h(A_{\young j})] \Big)^\eta\Big]\nn\\
		=&\E\Big[ n_\young(n_\young-1)^{\eta} \Big]\E[h(A_{\young})]^{1+\eta} ~.}
	Using Holder's inequality, the first expectation in \eqref{for:lem:finite:higher-moment:06} can be upper bounded as
	\eqn{\label{for:lem:finite:higher-moment:07}
		\E\Big[ n_\young(n_\young-1)^{\eta} \Big]\leq \E[n_\young]^{1-\eta}\E[n_\young(n_\young-1)]^{\eta}=\E[n_\young]^{1-\eta}\E[\Gamma_{(x,s)}^2]^{\eta}(b^{1-\chi}-1)^{2\eta}~.}
	The last equality in \eqref{for:lem:finite:higher-moment:07} follows from the fact that $n_\young$  is a mixed-Poisson random variable with intensity $\Gamma_{(x,s)}(b^{1-\chi}-1)$. Note that $\Gamma_{(x,s)}$ is a Gamma random variable with parameters $m+\delta+\one_{\{s=\young\}}$ and $1$. Since $m\geq 2$ and $\delta>0$, it can be shown that $\E[\Gamma_{(x,s)}^2]\leq 2\E[\Gamma_{(x,s)}]^2$. Therefore, \eqref{for:lem:finite:higher-moment:07} can be further upper bounded by
	\eqn{\label{for:lem:finite:higher-moment:08}
		\E\Big[ n_\young(n_\young-1)^{\eta} \Big]\leq 2^\eta\E[n_\young]^{1-\eta}\Big(\E[\Gamma_{(x,s)}](b^{1-\chi}-1)\Big)^{2\eta}=2^{\eta}\E[n_\young]^{1+\eta}(b^{1-\chi}-1)^{2\eta}~.}
	By \eqref{for:lem:finite:higher-moment:08}, the LHS of \eqref{for:lem:finite:higher-moment:05} is upper bounded by
	\eqn{\label{for:lem:finite:higher-moment:09}
		\text{LHS of \eqref{for:lem:finite:higher-moment:05}}\leq 2^\eta \Big( \E[n_\young]\E[h(A_{\young})] \Big)^{1+\eta}(b^{1-\chi}-1)^{2\eta}~.}
	Recall that, conditionally on $n_\young,$ the ages of $(A_{\young i})_{i\in[n_\young]}$ are i.i.d.\ random variables, independent of $n_\young$. Let, further, $A_\young$ denote the distribution of $A_{\young i}$ conditionally on $n_\young$. Then, by Wald's identity
	\eqn{\label{for:lem:finite:higher-moment:10}
		\E\Big[ \sum\limits_{i=1}^{n_\young} h(A_{\young i}) \Big]=\E[n_\young ]\E[h(A_{\young})] ~.
	}
	Therefore, by \eqref{for:lem:finite:higher-moment:06}, \eqref{for:lem:finite:higher-moment:09} and \eqref{for:lem:finite:higher-moment:10}, the LHS of \eqref{for:lem:finite:higher-moment:05} can be upper bounded by 
	\eqn{\label{for:lem:finite:higher-moment:11}
		\E\Big[ \sum\limits_{i=1}^{n_\young} h(A_{\young i})\Big( \sum\limits_{\substack{j=1\\j\neq i}}^{n_\young} h(A_{\young j}) \Big)^\eta   \Big] \leq 2^\eta \E\Big[ \sum\limits_{i=1}^{n_\young} h(A_{\young i}) \Big]^{1+\eta}(b^{1-\chi}-1)^{2\eta}~.}
	Now, by \eqref{for:lem:finite:higher-moment:04} and \eqref{for:lem:finite:higher-moment:11}, we upper bound the second term in the RHS of \eqref{for:lem:finite:higher-moment:02} by
	\eqan{\label{for:lem:finite:higher-moment:12}
		\sum\limits_{t\in\{\old,\young\}}^\infty\E\Big[ \sum\limits_{i=1}^{n_t} h(A_{ti})\Big( \sum\limits_{\substack{j=1\\j\neq i}}^{n_t} h(A_{tj}) \Big)^\eta   \Big] &\leq 2^\eta (b^{1-\chi}-1)^{2\eta}  \sum\limits_{t\in\{\old,\young\}}\E\Big[ \sum\limits_{i=1}^{n_t} h(A_{ti}) \Big]^{1+\eta}\\
		&\leq 2^\eta(b^{1-\chi}-1)^{2\eta} \E\Big[ \sum\limits_{t\in\{\old,\young\}}\sum\limits_{i=1}^{n_t} h(A_{ti}) \Big]^{1+\eta}~.\nn}
	Therefore, by \eqref{for:lem:finite:higher-moment:12}, the LHS of \eqref{for:lem:finite:higher-moment:02} is further upper bounded as
	\eqan{\label{for:lem:finite:higher-moment:13}
		&\E\Big[ \sum\limits_{t\in\{\old,\young\}}\sum\limits_{i=1}^{n_t} h(A_{ti})\Big( \sum\limits_{j=1}^{n_t} h(A_{tj}) \Big)^\eta \Big]\nn\\
		&\leq \E\Big[ \sum\limits_{t\in\{\old,\young\}} \sum\limits_{i=1}^{n_t} h(A_{ti})^{1+\eta}\Big] + 2^\eta (b^{1-\chi}-1)^{2\eta} \E\Big[ \sum\limits_{t\in\{\old,\young\}}\sum\limits_{i=1}^{n_t} h(A_{ti}) \Big]^{1+\eta}~.
	}
	\paragraph{Upper bounding the second and third sum in \eqref{for:lem:finite:higher-moment:2}.}
	Now we move on to upper bounding the second and third sum in \eqref{for:lem:finite:higher-moment:2}. By Jensen's inequality,
	\eqan{\label{for:lem:finite:higher-moment:14}
	&\E\Big[\sum\limits_{i=1}^{n_\old} h(A_{\old i})\Big]\E\Big[ \Big( \sum\limits_{j=1}^{n_\young} h(A_{\young j}) \Big)^\eta \Big]+\E\Big[\sum\limits_{i=1}^{n_\young} h(A_{\young i})\Big]\E\Big[ \Big( \sum\limits_{j=1}^{n_\old} h(A_{\old j}) \Big)^\eta \Big]\nn\\
	&\qquad\leq \E\Big[ \sum\limits_{t\in\{\old,\young\}}\sum\limits_{i=1}^{n_t} h(A_{ti}) \Big] \E\Big[ \Big( \sum\limits_{t\in\{\old,\young\}}\sum\limits_{i=1}^{n_t} h(A_{ti}) \Big)^\eta \Big]\nn\\
	&\qquad\leq \E\Big[ \sum\limits_{t\in\{\old,\young\}}\sum\limits_{i=1}^{n_t} h(A_{ti})  \Big]^{1+\eta} ~.}
	\paragraph{Back to \eqref{for:lem:finite:higher-moment:1}.}
	By \eqref{for:lem:finite:higher-moment:1}, \eqref{for:lem:finite:higher-moment:13} and \eqref{for:lem:finite:higher-moment:14}, $\E\big[ \big(M^{(1)}_b(x,s)\big)^{1+\eta} \big]$ can be upper bounded by
	\eqan{\label{for:lem:finite:higher-moment:15}
		\E\big[ \big(M^{(1)}_b(x,s)\big)^{1+\eta} \big]
		\leq& \rho_b^{-(1+\eta)}\Big[2\E\Big[ \sum\limits_{t\in\{\old,\young\}}\sum\limits_{i=1}^{n_t} \Big(\frac{h(A_{ti})}{h(x,s)}\Big)^{1+\eta}\Big]\\
		&\hspace{1.75cm} + 2^\eta (b^{1-\chi}-1)^{2\eta} \E\Big[ \sum\limits_{k\in\{\old,\young\}}\sum\limits_{i=1}^{n_k} \frac{h(A_{ki})}{h(x,s)} \Big]^{1+\eta}  \Big]~.\nn}
	Note that $$\E\Big[ \sum\limits_{k\in\{\old,\young\}}\sum\limits_{i=1}^{n_k} \frac{h(A_{ki})}{h(x,s)} \Big]=\rho_b~.$$ Therefore, we are left with upper bounding the first expectation in the RHS of \eqref{for:lem:finite:higher-moment:15}. We calculate the expectation in RHS of \eqref{for:lem:finite:higher-moment:14} explicitly as
	\eqan{\label{for:lem:finite:higher-moment:002}
		&\rho_b^{-(1+\eta)}\E\Big[ \sum\limits_{t\in\{\old,\young\}}^\infty \sum\limits_{i=1}^{n_t} \Big(\frac{h(A_{ti})}{h(x,s)}\Big)^{1+\eta}\Big]\\
		=&\rho_b^{-(1+\eta)}\sum\limits_{t\in\{ \old,\young \}}\int\limits_{0}^\infty \pi\kappa_b((x,s),(y,t))\frac{h(y,t)^{1+\eta}}{h(x,s)^{1+\eta}}\,dy\nn\\
		=&\nu(\Old,\Young)\Big[\frac{c_{s\old}\bfu_{\old}^b}{\bfu_s^b}x^{(1+\eta)/2-\chi}\int\limits_{0}^x  y^{\chi-3/2-\eta/2} \,dy+\frac{c_{s\young}\bfu_{\young}^b}{\bfu_s^b}x^{\chi-1/2+\eta/2}\int\limits_{x}^{bx}  y^{-\chi-(1+\eta)/2}\,dy\Big]\nn\\
		=& \nu(\Old,\Young)\Big[\frac{c_{s\old}\bfu_{\old}^b}{\bfu_s^b} \frac{1}{\chi-1/2-\eta/2}+\frac{c_{s\young}(1-b^{1/2-\chi})\bfu_{\young}^b}{\bfu_s^b}\frac{1}{\chi-1/2+\eta/2}\Big]~, \nn
	}
	when $\eta<2\chi-1$ and $$\nu(\Old,\Young)=\pi\rho_b^{-(1+\eta)}\max\{ \big(\frac{\bfu_t^b}{\bfu_s^b}\big)^{\eta}:s,t\in\{\Old,\Young\}\}~.$$ Note that the RHS of \eqref{for:lem:finite:higher-moment:002} does not depend on $(x,s)$. Hence 
	$\E\big[\big(M^{(1)}_b(x,s)\big)^{1+\eta}\big]$ is finite and its upper bound is independent of $(x,s)$.
\end{proof}
Lemma~\ref{lem:subcriticality} and Proposition~\ref{prop:supercriticality} together prove that $1/r(\bar{\bfT}_{\kappa})$ is the critical percolation threshold for the P\'olya point tree, and its survival function is continuous at $1/r(\bar{\bfT}_{\kappa})$.
\section{Spectral norm of mean offspring operator of the P\'olya point tree}\label{sec:spectral_radius}
In \cite[Theorem~1.4]{RRR22}, it was proved that a wide class of preferential attachment models, including the models of concern in this paper, converges locally to the P\'olya point tree described in \cite{BergerBorgs}. The P\'olya point tree is a multi-type branching process with mixed discrete and continuous type space. 
In this section, we investigate some spectral properties of the mean offspring operator of the P\'olya point tree $\bfT_{\kappa}$. As described in Section~\ref{sec:proof-strategy}, instead of $\bfT_{\kappa}$, we work with $\bar{\bfT}_\kappa$, but in the end, we prove that the spectral and operator norm of both $\bfT_{\kappa}$ and $\bar{\bfT}_{\kappa}$ are equal. The only difference between the two is that we identify the explicit eigenfunction for $\bar{\bfT}_{\kappa}$ corresponding to $r(\bar{\bfT}_{\kappa})$, whereas we do not have the same for $\bfT_{\kappa}$. A very interesting property of these integral operators are that despite being non-self-adjoint, their spectral and operator norms are equal.

First, we demonstrate that the operator norm of the integral operator is bounded above for $\delta>0$. Subsequently, we establish the existence of a positive eigenfunction of the integral operator corresponding to this upper bound. Utilizing Gelfand's formula, we further prove that the spectral norm of a bounded integral operator is upper bounded by its operator norm. This allows us to explicitly determine the spectral norm along with its associated eigenfunction.

In summary, we present a comprehensive analysis of $\bar{\bfT}_\kappa$, highlighting its boundedness, eigenvalue existence and spectral norm determination for $\delta>0$. 
\begin{lemma}[Upper bound of the operator norm]\label{lem:upperbound}
	For $\delta>0,$ the operator norm of $\bar{\bfT}_{\kappa}$ and $\bfT_{\kappa}$ is upper bounded by 
	RHS of \eqref{nu-equality-PAM}.
\end{lemma}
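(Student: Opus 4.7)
The plan is to apply Schur's test using the Perron structure of the $2\times 2$ type matrix $\bfM=(c_{st})$. Because $\bfM$ is strictly positive, Perron-Frobenius provides a unique largest eigenvalue $\lambda_\bfM$ together with a strictly positive right Perron eigenvector $\bfp=(\bfp_\old,\bfp_\young)$ (and an analogous left Perron eigenvector $\bfq$). I will take the test function $h(x,s)=\bfp_s/\sqrt{x}$ on $\Scal_e$, which is chosen so that $\kappa$ has a closed-form action on it, and its analog $h^\ast(x,s)=\bfq_s/\sqrt{x}$ for the dual direction.

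The heart of the argument is a one-dimensional integration showing that $h$ is a formal eigenfunction:
\[
\sum_t \int_0^\infty \kappa((x,s),(y,t))\,h(y,t)\,dy = r\,h(x,s),\qquad r:=\frac{\lambda_\bfM}{\chi-1/2}.
\]
Splitting by the type $t$ of the child, the $t=\old$ integrand is supported on $y\in(0,x)$ and evaluates to $c_{s,\old}\bfp_\old/((\chi-1/2)\sqrt{x})$ via $\int_0^x y^{\chi-3/2}\,dy=x^{\chi-1/2}/(\chi-1/2)$, while the $t=\young$ integrand is supported on $y\in(x,\infty)$ and evaluates to $c_{s,\young}\bfp_\young/((\chi-1/2)\sqrt{x})$ via $\int_x^\infty y^{-\chi-1/2}\,dy=x^{1/2-\chi}/(\chi-1/2)$; both integrals converge precisely because $\delta>0$ forces $\chi>1/2$. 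Summing over $t$ and invoking $\bfM\bfp=\lambda_\bfM\bfp$ gives $(\bfM\bfp)_s/((\chi-1/2)\sqrt{x})=r\,h(x,s)$. The dual identity $\sum_s\int\kappa((x,s),(y,t))\,h^\ast(x,s)\,dx=r\,h^\ast(y,t)$ follows from $\bfM^T\bfq=\lambda_\bfM\bfq$ by the same computation.

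Schur's test applied to the pair $(h,h^\ast)$ then yields $\|\bar{\bfT}_\kappa\|_{L^2\to L^2}\le r$. Inserting the explicit values $\operatorname{tr}(\bfM)=2m(m+\delta)/(2m+\delta)$ and $\det(\bfM)=m(m+\delta)(1+\delta)/(2m+\delta)^2$ (direct from the definition of $c_{st}$) into the quadratic formula yields $\lambda_\bfM=(m(m+\delta)+\sqrt{m(m-1)(m+\delta)(m+1+\delta)})/(2m+\delta)$, and the identity $1/(\chi-1/2)=2(2m+\delta)/\delta$ turns the bound into exactly the RHS of \eqref{nu-equality-PAM}. The bound for $\bfT_\kappa$ follows by zero-extension: for $g\in L^2(\Scal)$, its zero-extension $\tilde g\in L^2(\Scal_e)$ has the same $L^2$-norm and $(\bar{\bfT}_\kappa\tilde g)|_\Scal=\bfT_\kappa g$, so $\|\bfT_\kappa g\|_{L^2(\Scal)}\le\|\bar{\bfT}_\kappa\tilde g\|_{L^2(\Scal_e)}\le r\,\|g\|_{L^2(\Scal)}$.

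The main obstacle lies in the Schur-test step. The formal eigenfunction $h$ is not in $L^2$ (the $x^{-1/2}$ factor is $L^2$-critical at $0$), and the operator is non-self-adjoint because $c_{\old\young}\ne c_{\young\old}$ forces the right and left Perron eigenvectors $\bfp$ and $\bfq$ to be genuinely distinct. Schur's test does not require the test functions to lie in $L^2$, so non-integrability is not itself the obstruction. The delicate point is arranging the two Schur estimates with test functions $(h,h^\ast)$ so that both Schur constants equal $r$, rather than $r$ multiplied by some mismatch factor; this hinges on the Perron-Frobenius identity $\lambda_\bfM(\bfM)=\lambda_\bfM(\bfM^T)$ together with the structural coincidence $\bfp_\old\bfq_\old=\bfp_\young\bfq_\young$, which is forced by $c_{\old\old}=c_{\young\young}$.
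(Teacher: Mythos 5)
Your plan shares the paper's Schur-test skeleton and the scaling ansatz $h(x,s)=\bfp_s/\sqrt{x}$, and your one-dimensional integrals are handled correctly: with $\bfp$ the right Perron eigenvector of $\bfM$, the identity $\bfM\bfp=\lambda_{\sss\bfM}\bfp$ does give $\sum_t\int_0^\infty\kappa((x,s),(y,t))\,h(y,t)\,dy=r\,h(x,s)$, which is the paper's \eqref{eq:eigenfunction:1}. But this shows $h$ is a formal \emph{eigenfunction} of $\bar{\bfT}_\kappa$, which is what the paper uses for the \emph{lower} bound $r(\bar{\bfT}_\kappa)\ge r$ in Theorem~\ref{thm:operator-norm:PPT}; it is not what Schur's test needs for an \emph{upper} bound on the operator norm.

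The gap is at the Schur step. The two-function Schur test asks for positive weights $(g_1,g_2)$ and constants $C_1,C_2$ with $\int\kappa\,g_1\,dy\le C_1 g_2$ and $\int\kappa\,g_2\,dx\le C_2 g_1$, and delivers $\|\bar{\bfT}_\kappa\|\le\sqrt{C_1C_2}$. After the $y$- and $x$-integrals, taking $g_1(x,s)=\bfp_s/\sqrt{x}$ and $g_2(x,s)=\bfq_s/\sqrt{x}$ reduces this to the vector conditions $\bfM\bfp\le\lambda_1\bfq$ and $\bfM^{*}\bfq\le\lambda_2\bfp$ componentwise. That is a \emph{singular-vector} relation for $\bfM$, not an eigenvector relation, and it is what the paper's display \eqref{eq:upperbound:requirement} asks for (its $\bfp,\bfq$ are declared to be eigenvectors of $\bfM^{*}\bfM$ and $\bfM\bfM^{*}$). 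Your $\bfp,\bfq$ are the Perron eigenvectors of $\bfM$ and $\bfM^{T}$ instead: since $c_{\old\young}\ne c_{\young\old}$ the matrix is not symmetric, one finds $\bfp\propto(\sqrt{c_{\old\young}},\sqrt{c_{\young\old}})$ and $\bfq\propto(\sqrt{c_{\young\old}},\sqrt{c_{\old\young}})$, so $\bfp\not\parallel\bfq$, and $\bfM\bfp=\lambda_{\sss\bfM}\bfp$ is a multiple of $\bfp$, not of $\bfq$. Feeding $(h,h^\ast)$ into Schur therefore forces $C_1\ge r\max_s\bfp_s/\bfq_s$ and $C_2\ge r\max_s\bfq_s/\bfp_s$, whose geometric mean exceeds $r$ by the factor $\sqrt{\max(c_{\old\young},c_{\young\old})/\min(c_{\old\young},c_{\young\old})}>1$. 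The coincidence $\bfp_\old\bfq_\old=\bfp_\young\bfq_\young$ you cite is true (both equal $\sqrt{c_{\old\young}c_{\young\old}}$), but it is the wrong identity: matching the Schur constants would require $\bfp_\old/\bfq_\old=\bfp_\young/\bfq_\young$, i.e.\ $\bfp\parallel\bfq$, which is exactly what fails. So the assertion that ``both Schur constants equal $r$'' is not justified, and the upper bound does not follow from this argument.
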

\begin{proof}
	We use the Schur test \cite[Theorem (Schur's test, specific version)]{HV21} to upper bound the operator norm. Let $f((x,s))=\frac{\bfp_s}{\sqrt{x}}$ and $g((x,s))=\frac{\bfq_s}{\sqrt{x}},$ where $\bfp$ and $\bfq$ are to be chosen appropriately later on.
	Note that by \eqref{eq:kernel:offspring-operator:PPT},
	\eqan{\label{eq:schur:1}
		\sum\limits_{t\in\{ {\old,\young} \}}\int\limits_0^\infty \kappa((x,s),(y,t))f((y,t))\,dy=& c_{s\old}\bfp_{\old}\int\limits_0^x \frac{1}{x^{\chi}y^{1-\chi}}\frac{\,dy}{\sqrt{y}}+ c_{s\young}\bfp_{\young}\int\limits_x^\infty \frac{1}{x^{1-\chi}y^{\chi}}\frac{\,dy}{\sqrt{y}}\nn\\
		=&\frac{2}{2\chi-1}[c_{s\old}\bfp_{\old}+c_{s\young}\bfp_{\young}]\frac{1}{\sqrt{x}}~.}
	In the last equality, we use the fact that $\chi>1/2$ to compute the integral value. Similarly,
	\eqan{\label{eq:schur:2}
		&\sum\limits_{s\in\{ {\old,\young} \}}\int\limits_0^\infty \kappa((x,s),(y,t))g((x,s))\,dx\nn\\
		&\qquad=\sum\limits_{s\in\{ {\old,\young} \}} \Big[ c_{s\old}q_s\one_{\{t=\old\}}\int\limits_{y}^\infty \frac{1}{x^\chi y^{1-\chi}}\frac{\,dx}{\sqrt{x}}+c_{s\young}q_s\one_{\{t=\young\}}\int\limits_{0}^y \frac{1}{x^{1-\chi} y^{\chi}}\frac{\,dx}{\sqrt{x}} \Big]\\
		&\qquad= \frac{2}{2\chi-1}[\bfq_{\old} c_{\old t}+\bfq_{\young} c_{\young t}]\frac{1}{\sqrt{y}}~.\nn}
	Let us define the matrix $\bfM=(M_{s,t})_{s,t\in \{\old,\young\}}$ by
	\eqn{
		\label{def-bfM-PAM}
		\bfM
		=\left(\begin{matrix}
			c_{\old\old} &c_{\old\young}\\
			c_{\young\old} &c_{\young\young}
		\end{matrix}\right)
		.\nn}
	Therefore, \eqref{eq:schur:1} and \eqref{eq:schur:2} simplifies as
	\eqan{\label{eq:schur:3}
		\sum\limits_{t\in\{ {\old,\young} \}}\int\limits_0^\infty \kappa((x,s),(y,t))f((y,t))\,dy=& \frac{2}{2\chi-1} (\bfM\bfp)_s\frac{1}{\sqrt{x}},\\
		\label{eq:schur:4}
		\text{and}\qquad \sum\limits_{s\in\{ {\old,\young} \}}\int\limits_0^\infty \kappa((x,s),(y,t))g((x,s))\,dx=&\frac{2}{2\chi-1} (\bfM^*\bfq)_t\frac{1}{\sqrt{y}}~,} 
	where $\bfM^*$ is the transpose of $\bfM$. Thus, for the Schur-test in \cite[Theorem~(Schur’s test, specific version)]{HV21} with $p=p'=2$ to apply, we wish that 
	\eqn{ \label{eq:upperbound:requirement}
		\bfM \bfp=\lambda_{\sss\bfM}\bfq,
		\qquad{\text{and}}
		\qquad
		\bfM^* \bfq=\lambda_{\sss\bfM}\bfp.
	}
	Since $c_{\old\old}=c_{\young\young}$, the largest eigenvalue of $\bfM$ is given by
	\eqan{	\lambda_{\sss\bfM}\equiv c_{\old\old}+\sqrt{c_{\old\young}c_{\young\old}}=\frac{m(m+\delta)+\sqrt{m(m-1)(m+\delta)(m+1+\delta)}}{m+\delta} .\nonumber
	}
	Choosing $\bfp$ and $\bfq$ to be the right eigenvectors of $\bfM^*\bfM$ and $\bfM\bfM^*$, respectively, corresponding to the eigenvalue $\lambda_{\sss\bfM}^2$, we conclude that \eqref{eq:upperbound:requirement} holds.
	Hence, Lemma~\ref{lem:upperbound} follows immediately. Exactly the same calculation, adapted in the settings of $\bfT_{\kappa}$, proves that the operator norm of $\bfT_{\kappa}$ is also bounded above by the same.
\end{proof}

\begin{Remark}[Operator norm of ${\bfT}_{\kappa}$]\label{remark:operator-norm:restricted-space}
	In the restricted type space $\Scal$, it can also be shown that ${\bfT}_{\kappa}$ is a bounded operator from $L^2(\Scal,\lambda)$ to itself.
	The upper bound on the operator norm is already obtained in Lemma~\ref{lem:upperbound}.  To furnish a lower bound, we produce a series of $L^2(\Scal,\lambda)$-normalized functions $f_\vep(x,s)=\bfp_s/\sqrt{x\log(1/\vep)}\one_{\{x\geq \vep\}}$ and then use that $\| {\bfT}_{\kappa} \|\geq \langle {\bfT}_{\kappa} f_\vep,{\bfT}_{\kappa} f_\vep \rangle$. By allowing $\vep$ to approach zero, we witness the convergence of $\langle {\bfT}_{\kappa} f_\vep,{\bfT}_{\kappa} f_\vep \rangle$ towards the square of the RHS of \eqref{nu-equality-PAM},  
	proving it to be the $L^2(\Scal,\lambda)$-operator norm of $\bar{\bfT}_{\kappa}$.
	\hfill$\blacksquare$
\end{Remark}
\begin{proof}[Proof of Theorem~\ref{thm:operator-norm:PPT}]
	We show that there exists an eigenfunction of $\bar{\bfT}_\kappa$ with the eigenvalue $r(\bar{\bfT}_\kappa)$. If we choose $\bfp$ to be the right eigenvector of $\bfM$ with eigenvalue $\lambda_{\sss \bfM}$, then \eqref{eq:schur:3} simplifies to
	\eqn{\label{eq:eigenfunction:1}
		\sum\limits_{t\in\{ {\old,\young} \}}\int\limits_0^\infty \kappa((x,s),(y,t))f((y,t))\,dy= \frac{2}{2\chi-1} \lambda_{\sss\bfM}\bfp_s\frac{1}{\sqrt{x}}=r(\bar{\bfT}_\kappa)f((x,s))~.}
	The operator $\bar{\bfT}_\kappa$ possesses a positive eigenfunction with eigenvalue $r(\bar{\bfT}_\kappa)$. As $\bar{\bfT}_\kappa$ is a bounded linear operator, we can employ Gelfand's theorem to obtain that the operator norm of $\bar{\bfT}_\kappa$ is at least its spectral norm. Since $r(\bar{\bfT}_{\kappa})$ is an eigenvalue of $\bar{\bfT}_{\kappa}$, the spectral norm of $\bar{\bfT}_{\kappa}$ as well as the operator norm is lower bounded by $r(\bar{\bfT}_{\kappa})$. Moreover, Lemma~\ref{lem:upperbound} proves that the operator norm is also bounded above by the same number. Consequently, we establish that $r(\bar{\bfT}_\kappa)$ indeed represents both the operator norm and spectral norm of $\bar{\bfT}_\kappa$ in the extended type space $\Scal_e=[0,\infty)\times\{\Old,\Young\}$.
	
	Although we cannot explicitly provide the eigenfunction corresponding to $r(\bar{\bfT}_{\kappa})$, we prove that the spectral norm of ${\bfT}_{\kappa}$ is also $r(\bar{\bfT}_{\kappa})$, following an argument involving the numerical radius of ${\bfT}_{\kappa},~w({\bfT}_{\kappa})$. The numerical radius of any bounded operator is upper bounded by its operator norm. With the sequence $f_\vep$ defined in Remark~\ref{remark:operator-norm:restricted-space} and taking $\vep$ to $0$, we obtain $r(\bar{\bfT}_{\kappa})$ as a lower bound for $w({\bfT}_{\kappa})$ as well, proving $w({\bfT}_{\kappa})=\|{\bfT}_{\kappa}\|$. Since ${\bfT}_{\kappa}$ is a bounded operator on a Hilbert space $L^2(\Scal,\lambda),$ the previous equality implies that the spectral norm and numerical radius of ${\bfT}_{\kappa}$ are equal. 
\end{proof}

\begin{Remark}[Equality of operator norm and spectral norm]
	Although $\bar{\bfT}_{\kappa}$ and $\bfT_{\kappa}$ are not self-adjoint operators, their spectral norm and operator norm are equal.
\end{Remark}
Following similar steps we now prove Lemma~\ref{lem:spectral-norm:truncated-operator}.
\begin{proof}[Proof of Lemma~\ref{lem:spectral-norm:truncated-operator}]
	We proceed similarly as we did in Theorem~\ref{thm:operator-norm:PPT}. It can easily be shown that by the Schur test we can upper bound the operator norm by $r(\bar{\bfT}_{\kappa_b})$. Now we show that $h(x,s)$ in \eqref{eq:lemma:spectral-radius:efunc}, is the eigenfunction of $\bar{\bfT}_{\kappa_b}$ with eigenvalue $r(\bar{\bfT}_{\kappa_b})$. Therefore, for any $(x,s)\in\Scal_e$,
	\eqan{\label{for:lem:spectra:truncated-operator:1}
		\sum\limits_{t\in\{\old,\young\}} \int\limits_0^\infty \kappa_b((x,s),(y,t))h(y,s)\,dy
		=&\sum\limits_{t\in\{\old,\young\}} \int\limits_0^{bx} \kappa((x,s),(y,t))h(y,s)\,dy\nn\\
		=&\frac{x^{-1/2}}{\chi-\frac{1}{2}}\big[ c_{so}\bfu_{\old}^b +c_{s\young}(1-b^{1/2-\chi})\bfu_{\young}^b\big]~.
	}
	Since $\bfu^b$ is the eigenvector of $\bfM_b$ corresponding to its largest eigenvalue $\lambda_{\sss \bfM}^{\sss (b)}$ given by	\eqn{\label{for:lem:spectra:truncated-operator:2}
		\lambda_{\sss \bfM}^{\sss (b)} = \frac{(1+q)c_{\old\old}+\sqrt{((1-q)c_{\old\old})^2+4qc_{\old\young}c_{\young\old}}}{2}~,}
	where $q=1-b^{1/2-\chi}$, \eqref{for:lem:spectra:truncated-operator:1} leads to
	\eqn{\label{for:lem:spectra:truncated-operator:3}
		\Big(\bar{\bfT}_{\kappa_b}h\Big)(x,s)=\frac{\lambda_{\sss \bfM}^{\sss (b)}}{2\chi-1}\frac{\bfu_s^b}{\sqrt{x}}=r(\bar{\bfT}_{\kappa_b})h(x,s)~.}
	Hence $h$ is the eigenfunction of the truncated integral operator $\bar{\bfT}_{\kappa_b}$ corresponding to its spectral norm $r(\bar{\bfT}_{\kappa_b})$ in \eqref{eq:lemma:spectral-radius}.
\end{proof}




\backmatter


\begin{wrapfigure}{r}{0.1\textwidth}
	\includegraphics[width=\linewidth]{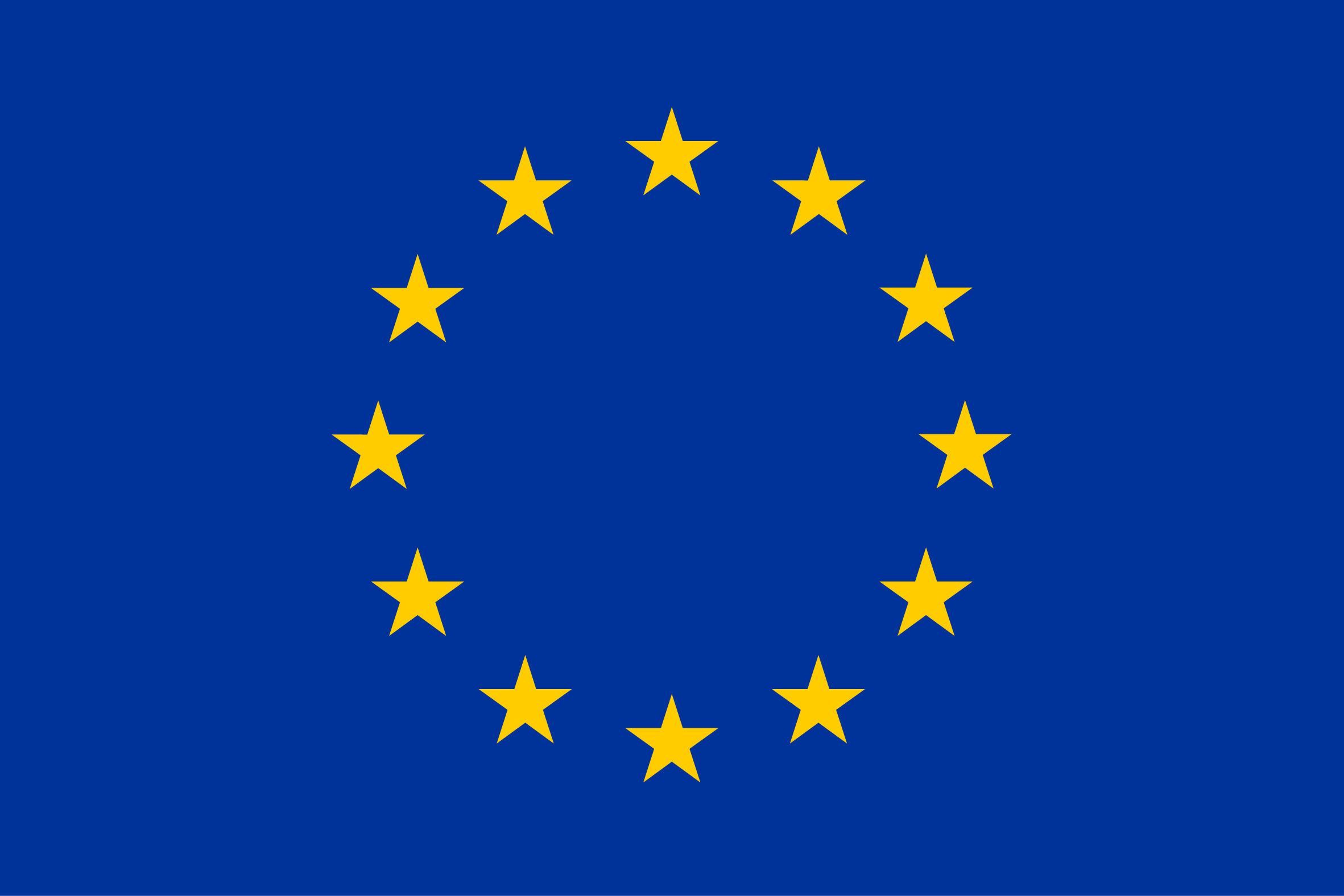} 
	\label{fig:wrapfig}
\end{wrapfigure}
\bmhead{Acknowledgments}
This work is supported in part by the Netherlands Organisation for Scientific Research (NWO) through the Gravitation {\sc Networks} grant 024.002.003. The work of RR is further supported by the European Union's Horizon 2020 research and innovation programme under the Marie Sk\l{}odowska-Curie grant agreement no.\ 945045.






\begin{appendices}
\appendix
\section{Expansion properties of preferential attachment models}
The proof of Lemma~\ref{lem:intermediate:Cheeger-value} follows by adapting the proof of \cite[Lemma~2]{MPS06} for the case of model (a), (b) and (d) with general initial graph settings and any admissible $\delta$. 
\begin{proof}[Proof of Lemma~\ref{lem:intermediate:Cheeger-value}]
	Since models (a),(b) and (d) can all be formed using collapsing procedure mentioned in \cite{vdH1}, we prove the next few steps for all of these models. Recall the definitions of \rm{BAD} set from \eqref{def:bad-set}, and \rm{ILL} and \rm{FIT} mini-vertices from Definition~\ref{def:fit-ill-minivertex}.
	
	Let $A_1\subset A$ such that the mini-vertices in $A_1$ are associated with vertices in $S,$ and $A_2=A\setminus A_1$ such that the mini-vertices in $A_2$ are associated with vertices in $S^c$. Denote $|A_1|=k_1$ and $|A_2|=k_2$ with $|A|=k_1+k_2$. Now there are $mk-k_1$ many mini-vertices that are associated with vertices in $S,$ but are not in $A$. Similarly, there are $m_{[n]}-mk-k_2$ many mini-vertices that are associated with vertices in $S^c,$ but are not in $A$. Let $x_1<x_2<\cdots<x_{mk-k_1}$ be the mini-vertices that are associated with vertices in $S,$ but not in $A$, and $\Bar{x}_1<\Bar{x}_2<\cdots<\Bar{x}_{m_{[n]}-mk-k_2}$ be the mini-vertices that are associated with vertices in $S^c,$ but not in $A$. For $i\in[mk-k_1],$
	\eqn{\label{for:lem:intermediate:Cheeger-value:1}
		x_i=y_i+z_i+1~,}
	where $y_i$ is the number of mini-vertices from $A$, that arrived prior to $x_i$, and $z_i$ is the number of mini-vertices from $[m_{[n]}]\setminus A$, that arrived prior to $x_i$. Similarly, for $i\in[m_{[n]}-mk-k_2],$
	\eqn{\label{for:lem:intermediate:Cheeger-value:2}
		\Bar{x}_i=\Bar{y}_i+\Bar{z}_i+1~,}
	where $\Bar{y}_i$ is the number of mini-vertices from $A$, that arrived prior to $\Bar{x}_i$, and $\Bar{z}_i$ is the number of mini-vertices from $[m_{[n]}]\setminus A$, that arrived prior to $\Bar{x}_i$.
	Note that $z_i$ or $\Bar{z}_i$ counts the total number of $x_j$ and $\Bar{x}_j$ appeared before $x_i$ or $\Bar{x}_i$, respectively. If we order $\{ x_i \}_{i\in[mk-k_1]}\cup \{ \Bar{x}_i\}_{i\in[m_{[n]}-mk-k_2]}$ in an increasing order, then the corresponding $z_i$ or $\Bar{z}_i$ values increase by $1$ every time, and these values ranges from $0$ to $m_{[n]}-k_1-k_2$. Therefore,
	\eqn{\label{for:lem:intermediate:Cheeger-value:3}
		\bigcup\limits_{i=1}^{mk-k_1}\{z_i+1\}\bigcup \bigcup\limits_{i=1}^{m_{[n]}-mk-k_2}\{\Bar{z}_i+1\}=[m_{[n]}-k_1-k_2]
		=[m_{[n]}-|A|]~.}
	The total volume of the tree when $t$-th mini-vertex join the tree is $2(t-1)$. If $t=x_i$ (or $\Bar{x}_i$), then the total volume equals
	\eqn{\label{for:lem:intermediate:Cheeger-value:4}
		2z_i+2y_i\quad \text{or}\quad  2\Bar{z}_i+2\Bar{y}_i~.}
	Now we have the following cases:
	\begin{enumerate}
		\item \textbf{Case 1:} Both $1$ and $2$ are in the same set. In this scenario, we consider $S$ to be the one containing both the vertices.
		\item \textbf{Case 2:} $1$ and $2$ are in different sets. In this scenario, we consider $S$ to be the set containing $1$. Note that $a_2\leq m$.
	\end{enumerate}
	Since we have finitely many (and deterministically depending only on the structure of the initial graph) mini-vertices with deterministic connection, we can safely assume that the upper bound which we are going to prove works for every \rm{ILL} mini-vertex except for first few. Since we aim to prove an exponential bound in the end, these finitely many disturbances make no difference in the end result. So from now on, we keep on upper bounding the non-deterministic edges, and assume that the upper bound holds for the deterministic edges also.

	Note that we are considering the case where all mini-vertices in $A$ are FIT, whereas all mini-vertices in $[m_{[n]}]\setminus{A}$ are ILL. When $x_i$ arrives, the total volume of the mini-vertices associated to the vertices of $S$ consists of
	\begin{enumerate}
		\item[(a)] all ILL mini-vertices that arrived prior to $x_i$, and are associated to a vertex in $S$. Each such mini-vertex contributes $2$ to the total volume of the mini-vertices associated to the vertices of $S$. There are $(i-1)$ such mini-vertices contributing a total of $2(i-1)$ to the sum;
		\item[(b)] all FIT mini-vertices that arrived prior to $x_i$, and are associated to a vertex in $S$. Each such mini-vertex contributes $1$ to the total volume of the mini-vertices associated to the vertices of $S$. There are $y_i$ such mini-vertices contributing a total of $y_i$ to the sum.
	\end{enumerate}
	Therefore when $x_i$ arrives, the volume of the mini-vertices associated to the vertices of $S$ is $2(i-1)+y_i$. Now we move on to bound the probability of creating \rm{ILL} mini-vertices conditionally on all mini-vertices in $A$ being \rm{FIT} for models (a), (b) and (d). First we start with model (b) and then adapt the bound for models (a) and (d) respectively. Recall that $a_{[2]}=a_1+a_2$ is the total degree of the initial graph.
	\paragraph{Model (b).}
	The probability that $x_i$ connects to mini-vertices associated with vertices in $S$, and becomes \rm{ILL} given that all mini-vertices in $A$ which arrived prior to $x_i$ are \rm{FIT}, and both $1$ and $2$ are in $S$, is 
	\eqn{\label{for:lem:intermediate:Cheeger-value:5}
		\frac{2(i-1)+y_i+(i-1+y_i-a_{[2]})\delta/m+2\delta}{2(y_i+z_i)+(y_i+z_i-a_{[2]})\delta/m+2\delta }~.}
	For any $\delta>-m$ and $y_i\geq 0$, the probability can be upper bounded by
	\eqan{\label{for:lem:intermediate:Cheegar-value:06}
		&\frac{2(i-1)+y_i+(i-1+y_i)\delta/m+\delta(2-a_{[2]}/m)}{2(y_i+z_i)+(y_i+z_i)\delta/m+\delta(2-a_{[2]}/m)}\nn\\
		\leq& \frac{2(i-1+y_i)+(i-1+y_i)\delta/m+\delta(2-a_{[2]}/m)}{2(y_i+z_i)+(y_i+z_i)\delta/m+\delta(2-a_{[2]}/m) }~. }
	Since $2+\delta/m> 1$ and $|y_i|\leq |A|$, we further upper bound the probability in \eqref{for:lem:intermediate:Cheeger-value:5} by
	\eqan{\label{for:lem:intermediate:Cheeger-value:6}
		&\frac{i+|A|-1+\delta(2-a_{[2]}/m)/(2+\delta/m)}{z_i+|A|+\delta(2-a_{[2]}/m)/(2+\delta/m)}\nn\\
		\leq & \frac{i+|A|+\delta(2-a_{[2]}/m)/(2+\delta/m)}{z_i+|A|+1+\delta(2-a_{[2]}/m)/(2+\delta/m)}~.
	}
	Instead, for the case when $1\in S$, but $2\in S^c$, the edge-connection probability in \eqref{for:lem:intermediate:Cheeger-value:5} changes to
	\eqn{\label{for:lem:intermediate:Cheegar-value:08}
		\frac{2(i-1)+y_i+(i-1+y_i-a_{1})\delta/m+\delta}{2(y_i+z_i)+(y_i+z_i-a_{[2]})\delta/m+2\delta }~.}
	Since $a_2\leq m$, for $\delta\geq 0$ we bound the edge-connection probability by
	\eqn{\label{for:lem:intermediate:Cheegar-value:09}
		\frac{2(i-1)+y_i+(i-1+y_i-a_{1})\delta/m+\delta(1-a_{1}/m)}{2(y_i+z_i)+(y_i+z_i-a_{[2]})\delta/m+\delta(1-a_{1}/m) }~,}
	whereas, for $\delta\in(-m,0)$, the probability is upper bounded by
	\eqn{\label{for:lem:intermediate:Cheegar-value:10}
		\frac{2(i-1)+y_i+(i-1+y_i-a_{1})\delta/m+\delta(2-a_{[2]}/m)}{2(y_i+z_i)+(y_i+z_i-a_{[2]})\delta/m+\delta(2-a_{[2]}/m) }~.}
	Following a similar upper bound in \eqref{for:lem:intermediate:Cheegar-value:06} and \eqref{for:lem:intermediate:Cheeger-value:6}, we upper bound \eqref{for:lem:intermediate:Cheegar-value:09} by
	\eqn{\label{for:lem:intermediate:Cheegar-value:11}
		\frac{i+|A|-1+\delta(1-a_{1}/m)/(2+\delta/m)}{z_i+|A|+\delta(1-a_{1}/m)/(2+\delta/m)}~,}
	and upper bound \eqref{for:lem:intermediate:Cheegar-value:11} by
	\eqan{\label{for:lem:intermediate:Cheegar-value:12}
		&\frac{i+|A|-1+\delta(2-a_{[2]}/m)/(2+\delta/m)}{z_i+|A|+\delta(2-a_{[2]}/m)/(2+\delta/m)}\nn\\
		\leq&\frac{i+|A|+\delta(2-a_{[2]}/m)/(2+\delta/m)}{z_i+|A|+1+\delta(2-a_{[2]}/m)/(2+\delta/m)}~.}
	Denote $\delta_0=\max\{0,\lceil\delta(2-a_{[2]}/m)/(2+\delta/m)\rceil,\lceil\delta(1-a_{1}/m)/(2+\delta/m)\rceil\}$. Then we bound the connection probability in \eqref{for:lem:intermediate:Cheeger-value:5} and \eqref{for:lem:intermediate:Cheegar-value:09} by
	\eqn{\label{for:lem:intermediate:Cheegar-value:13} \frac{i+|A|+\delta_0}{z_i+|A|+1+\delta_0}~.}
	\paragraph{Adaptation to model (a).}
	In model (a), the total volume of the graph is always one more than that in model (b). For model (a), $x_i$ can become \rm{ILL} either by creating a self-loop or connecting to the mini-vertices in $S$ and hence the probability in \eqref{for:lem:intermediate:Cheeger-value:5} becomes
	\eqn{\label{for:lem:intermediate:Cheeger-value:7}
		\frac{2(i-1)+y_i+1+(i+y_i-a_{[2]})\delta/m+2\delta}{2(y_i+z_i)+1+(y_i+z_i+1-a_{[2]})\delta/m+2\delta }~,}
	which can be bounded by
	\eqn{\label{for:lem:intermediate:Cheeger-value:7-1}
	\frac{2i+y_i+(i+y_i-a_{[2]})\delta/m+2\delta}{2(y_i+z_i+1)+(y_i+z_i+1-a_{[2]})\delta/m+2\delta }~.}
	Following exactly the same bounding technique in \eqref{for:lem:intermediate:Cheegar-value:06}-\eqref{for:lem:intermediate:Cheeger-value:6}, we upper bound \eqref{for:lem:intermediate:Cheeger-value:7-1} by the RHS of \eqref{for:lem:intermediate:Cheeger-value:6}.
	Similarly \eqref{for:lem:intermediate:Cheegar-value:08} is reformulated as
	\eqn{\label{for:lem:intermediate:Cheeger-value:14}
		\frac{2(i-1)+y_i+1+(i+y_i-a_{1})\delta/m+\delta}{2(y_i+z_i)+1+(y_i+z_i+1-a_{[2]})\delta/m+2\delta }~, }
	which can be bounded by
	\eqn{\label{for:lem:intermediate:Cheeger-value:14-1}
	\frac{2i+y_i+(i+y_i-a_{1})\delta/m+\delta}{2(y_i+z_i+1)+(y_i+z_i+1-a_{[2]})\delta/m+2\delta }~.}
	We follow \eqref{for:lem:intermediate:Cheegar-value:09}-\eqref{for:lem:intermediate:Cheegar-value:12} to further upper bound \eqref{for:lem:intermediate:Cheeger-value:14-1} by RHS of \eqref{for:lem:intermediate:Cheegar-value:12}. Therefore with the same $\delta_0$ chosen earlier for model (b), we upper bound the probabilities in \eqref{for:lem:intermediate:Cheeger-value:7} and \eqref{for:lem:intermediate:Cheeger-value:14} by \eqref{for:lem:intermediate:Cheegar-value:13}.
	\paragraph{Adaptation to model(d).}
	In model (d), the difference is that the mini-vertices cannot attach to any mini-vertex associated with the same vertex. To tackle this difficulty we associate a mark to every mini-vertex counting the number of mini-vertices that appeared before it, and are associated to the same vertex in the graph. Define $\{x_i,y_i,z_i\}$ and $\{\bar{x}_i,\bar{y}_i,\bar{z}_i\}$ similarly as before. Note that every mini-vertex $x_i$ (and $\bar{x}_i$) has a score $r_i$ (and $\bar{r}_i$) taking values in $[m]$, representing the edge the mini-vertex creates from the vertex it is associated to. In fact every mini-vertex has such a score, but we refrain from defining that as it complicates the notation. Note that in model (d), every mini-vertex $x_i$ and $\bar{x}_i$ can attach to any mini-vertex except for $r_i$ many mini-vertices (including itself). 
	
	When $x_i$ (or $\bar{x}_i$) joins the graph, the total volume of the graph to which it can attach to is
	\eqn{\label{for:lem:model-d:1}
		2(y_i+z_i)-r_i\qquad\text{or}\qquad 2(\bar{y}_i+\bar{z}_i)-\bar{r}_i~.}
	When $x_i$ arrives, the total volume of $S$ to which $x_i$ may connect to is similarly calculated.
	For the case $1,2\in S$, the probability that $x_i$ becomes \rm{ILL} is
	\eqan{\label{for:lem:model-d:2}
		&\frac{2(i-1)+y_i-r_i+(i-1+y_i-r_i-a_{[2]})\delta/m+2\delta}{2(y_i+z_i)-r_i+(y_i+z_i-r_i-a_{[2]})\delta/m+2\delta}\nn\\
		&\qquad=\frac{2(i-1)+y_i-r_i(1+\delta/m)+(i-1+y_i-a_{[2]})\delta/m+2\delta}{2(y_i+z_i)-r_i(1+\delta/m)+(y_i+z_i-a_{[2]})\delta/m+2\delta}~,}
	whereas for the case $1\in S,$ but $2\notin S$, the probability is 
	\eqan{\label{for:lem:model-d:3}
		&\frac{2(i-1)+y_i-r_i+(i-1+y_i-r_i-a_{1})\delta/m+\delta}{2(y_i+z_i)-r_i+(y_i+z_i-r_i-a_{[2]})\delta/m+2\delta}\nn\\
		&\qquad=\frac{2(i-1)+y_i-r_i(1+\delta/m)+(i-1+y_i-a_{1})\delta/m+\delta}{2(y_i+z_i)-r_i(1+\delta/m)+(y_i+z_i-a_{[2]})\delta/m+2\delta}~.}
	Note that here we have to subtract a factor of $r_i$ in the denominator also to compensate for the fact that $x_i$ cannot connect to $r_i$ many mini-vertices to become ILL. 
	Since $\delta>-m$, we can upper bound the probability in \eqref{for:lem:model-d:2} and \eqref{for:lem:model-d:3} by \eqref{for:lem:intermediate:Cheeger-value:5} and \eqref{for:lem:intermediate:Cheegar-value:08}, respectively. 
	\paragraph{Conclusion of the proof for all models.}
	Similarly we calculate the probability that $\Bar{x}_i$ becomes \rm{ILL}, given that all mini-vertices that arrived before $\Bar{x}_i$ and belong to $A$ are \rm{FIT}, while those belonging to $A^c$ are \rm{ILL} for models (a), (b) and (d) of preferential attachment models, and we can bound these probabilities by $$\frac{i+|A|+\delta_0}{\Bar{z}_i+|A|+1+\delta_0}~.$$
	The probability that all mini-vertices associated with $[m_{[n]}]\setminus A$ are ILL, given that all mini-vertices in $A$ are FIT, is at most
	\eqn{\label{for:lem:intermediate:Cheeger-value:9}
		\Big( \prod\limits_{i=1}^{mk-k_1}\frac{i+|A|+\delta_0}{z_i+1+|A|+\delta_0} \Big)\Big( \prod\limits_{i=1}^{m_{[n]}-mk-k_2}\frac{i+|A|+\delta_0}{\Bar{z}_i+1+|A|+\delta_0} \Big)
		~.}
	Using the relation obtained in \eqref{for:lem:intermediate:Cheeger-value:3}, we simplify \eqref{for:lem:intermediate:Cheeger-value:9} as
	\eqan{\label{for:lem:intermediate:Cheegar-value:15}
		&\Big( \prod\limits_{i=1}^{mk-k_1}\frac{i+|A|+\delta_0}{z_i+1+|A|+\delta_0} \Big)\Big( \prod\limits_{i=1}^{m_{[n]}-mk-k_2}\frac{i+|A|+\delta_0}{\Bar{z}_i+1+|A|+\delta_0} \Big)\nn\\
		&\qquad\qquad=\frac{\prod\limits_{i=1}^{mk-k_1}(i+|A|+\delta_0)\prod\limits_{i=1}^{m_{[n]}-mk-k_2}(i+|A|+\delta_0)}{\prod\limits_{i=1}^{m_{[n]}-|A|}(i+|A|+\delta_0)}~.
	}
	Now following the similar calculation in \cite[equations (8)-(10)]{MPS06}, the RHS of \eqref{for:lem:intermediate:Cheegar-value:15} is further simplified and upper bounded as
	\eqan{\label{for:lem:intermediate:Cheegar-value:16}
		\text{RHS of \eqref{for:lem:intermediate:Cheegar-value:15}}=&~\frac{\prod\limits_{i=1}^{mk-k_1+|A|+\delta_0}i\prod\limits_{i=1}^{m_{[n]}-mk-k_2+|A|+\delta_0}i}{\prod\limits_{i=1}^{m_{[n]}+\delta_0}i\prod\limits_{i=1}^{|A|+\delta_0} i}\nn\\
		=&~\frac{(mk+k_2+\delta_0)!(m_{[n]}-mk+k_1+\delta_0)!}{(m_{[n]}+\delta_0)! (|A|+\delta_0)!}\\
		=&~\prod\limits_{i=0}^{k_2-1}\Big( \frac{mk+k_2+\delta_0-i}{m_{[n]}+\delta_0-i} \Big) \prod\limits_{i=0}^{k_1-1}\Big( \frac{m_{[n]}-mk+k_1+\delta_0-i}{m_{[n]}-k_2+\delta_0-i} \Big)\nn\\
		&\hspace{5cm}\times\frac{(mk+\delta_0)!(m_{[n]}-mk+\delta_0)!}{(m_{[n]}-|A|+\delta_0)!(|A|+\delta_0)!}\nn\\
		\leq&~ \frac{(mk+\delta_0)!(m_{[n]}-mk+\delta_0)!}{(m_{[n]}-|A|+\delta_0)!(|A|+\delta_0)!}~,\nn
	}
	where the inequality follows from the fact that
	\eqan{\label{for:lem:intermediate:Cheegar-value:17}
		\frac{mk+k_2+\delta_0-i}{m_{[n]}+\delta_0-i}\leq&~ 1\qquad\text{for all }i\leq k_2-1~,\nn\\
		\mbox{and}\qquad\frac{m_{[n]}-mk+k_1+\delta_0-i}{m_{[n]}-k_2+\delta_0-i} \leq&~1\qquad\text{for all }i\leq k_1-1~.
	}
	Although the calculations of Proposition~\ref{prop:Cheeger-value:model-b} follow from the bound obtained in \eqref{for:lem:intermediate:Cheegar-value:17}, we upper bound it further to reduce a similar repetitive calculation as in \cite[Proof of Theorem~1]{MPS06} with minor modification. Since we aim to obtain exponential upper bound on the probability in Proposition~\ref{prop:Cheeger-value:model-b}, we use this leverage to further upper bound the probability as
	\eqan{\label{for:lem:intermediate:Cheegar-value:18}
		&\frac{(mk+\delta_0)!(m_{[n]}-mk+\delta_0)!}{(m_{[n]}-|A|+\delta_0)!(|A|+\delta_0)!}\nn\\
		=&~\prod\limits_{i=0}^{\delta_0-1}\Big( \frac{mk+\delta-i}{|A|+\delta_0-i} \Big)\prod\limits_{i=0}^{\delta_0-1}\Big( \frac{m_{[n]}-mk+\delta_0-i}{m_{[n]}-|A|+\delta_0-i} \Big)\frac{(mk)!(m_{[n]}-mk)!}{(m_{[n]}-|A|)!|A|!}\\
		\leq&~(mn+\delta_0)^{\delta_0}
		\binom{mk}{|A|}\binom{m_{[n]}-|A|}{mk-|A|}^{-1}~,\nn
	}
	where the inequality follows from the fact that $|A|\leq \alpha k<mk<mn$.
	Since $|A|\leq \alpha k$, and the upper bound is monotonically increasing in the size of $A$, the bound in the lemma follows immediately from \eqref{for:lem:intermediate:Cheegar-value:18}. 
\end{proof}
\section{Proof for the sufficient condition}\label{app:sufficiency}
In this section we provide the proof for the sufficiency condition in Lemma~\ref{lem:equivalent:condition}. The proof follows immediately from Markov's inequality.
\begin{proof}[Proof of Claim~\ref{lem:equivalent:condition}]
	First we simplify $f(\e^t)$ defined in \eqref{eq:K-S:07}. For any $t>1$ and $\eta>0$,
	\eqn{\label{lem:eqv:1}
	\bar{f}(\e^t)=\sup\limits_{x}\prob\Big( \frac{V(\tilde{\xi}^x)}{\rho_b h(x)}>\e^t \Big)=\sup\limits_{x}\prob\Big( \left(\frac{V(\tilde{\xi}^x)}{\rho_b h(x)}\right)^{\eta}>\e^{\eta t} \Big)~.}
	By Markov's inequality, we bound $\bar{f}(\e^t)$ by
	\eqn{\label{lem:eqv:2}
	\sup\limits_x \e^{-\eta t}\E\Big[ \left(\frac{V(\tilde{\xi}^x)}{\rho_b h(x)}\right)^{\eta} \Big]=\e^{-\eta t}\sup\limits_x \E\Big[ \left(\frac{V({\xi}^x)}{\rho_b h(x)}\right)^{1+\eta} \Big]~.}
	The equality follows from the fact that $\tilde{\xi}^x$ is size-biased version of $\xi^x$ as defined in \eqref{eq:K-S:2}. Since, $\sup\limits_x \E\Big[ \left(\frac{V({\xi}^x)}{\rho_b h(x)}\right)^{1+\eta} \Big]$ finite, we bound $\int\limits_1^\infty \bar{f}(\e^t)\,dt$ by
	\eqn{\label{eq:eqv:3}
	\int\limits_1^\infty \bar{f}(\e^t)\,dt \leq \sup\limits_x \E\Big[ \left(\frac{V({\xi}^x)}{\rho_b h(x)}\right)^{1+\eta} \Big] \e^{-\eta}/\eta<\infty~,}
which completes the proof.
\end{proof}




\end{appendices}

\bibliographystyle{acm}
\bibliography{bibliofile}

\end{document}